\newtheorem{thm}{Theorem}
\newtheorem{cor}[thm]{Corollary}
\newtheorem{lem}[thm]{Lemma}
\newtheorem{prop}[thm]{Proposition}
\theoremstyle{definition}
\newtheorem{defn}[thm]{Definition}
\theoremstyle{remark}
\newtheorem{rem}[thm]{Remark}
\newtheorem{nrem}[thm]{Notational Remark}
\newcommand{\R} {\mathbb{R}}
\newcommand{\C} {\mathbb{C}}
\newcommand{\E} {\mathbb{E}}
\newcommand{\p} {\mathbb{P}}
\DeclareMathOperator{\Tr}{Tr}
\DeclareMathOperator{\var}{var}
\DeclareMathOperator{\re}{\mathrm{Re}}
\DeclareMathOperator{\im}{\mathrm{Im}}
\DeclareMathOperator{\SSK}{SSK}
\DeclareMathOperator{\CW}{CW}
\DeclareMathOperator{\TW}{TW}
\DeclareMathOperator{\exte}{ext}
\DeclareMathOperator{\covar}{cov}
\newcommand{\caF}{{\mathcal F}}
\newcommand{\caK}{{\mathcal K}}
\newcommand{\caN}{{\mathcal N}}
\newcommand{\caO}{{\mathcal O}}
\newcommand{\bss}{{\boldsymbol s}}
\newcommand{\wt}{\widetilde}
\newcommand{\ol}{\overline}
\newcommand{\beq}{ \begin{equation} }
\newcommand{\eeq}{ \end{equation} }
\newcommand{\lone}{\mathbbm{1}} 
\newcommand{\dd}{\mathrm{d}}
\newcommand{\ii}{\mathrm{i}}
\renewcommand{\bss}{\boldsymbol{\sigma}}
\newcommand{\Aq}{S}
\newcommand{\Aw}{T}
\numberwithin{equation}{section} 
\numberwithin{thm}{section}
\title{Fluctuations of the free energy of the spherical Sherrington--Kirkpatrick model with ferromagnetic interaction}
\author{Jinho Baik\footnote{Department of Mathematics, University of Michigan,
Ann Arbor, MI, 48109, USA \newline email: \texttt{baik@umich.edu}}
and Ji Oon Lee\footnote{Department of Mathematical Sciences, KAIST, Daejeon, 34141, Korea
\newline email: \texttt{jioon.lee@kaist.edu}}}
\begin{document}

\maketitle

\begin{abstract}
We consider a spherical spin system with pure 2-spin spherical Sherrington--Kirkpatrick Hamiltonian with ferromagnetic Curie--Weiss interaction. 
The system shows a two-dimensional phase transition with respect to the temperature and the coupling constant. 
We compute the limiting distributions of the free energy for all parameters away from the critical values.
The zero temperature case corresponds to the well-known phase transition of the largest eigenvalue of a rank 1 spiked random symmetric matrix. 
As an intermediate step, we establish a central limit theorem for the linear statistics of rank 1 spiked random symmetric matrices. 
\end{abstract}

\section{Introduction}

\subsection{Model}

Let $A=(A_{ij})_{i,j=1}^N$ be a real symmetric matrix where $A_{ij}$, $1 \leq i < j \leq N$, are independent random variables with mean $0$ and variance $1$, and 
the diagonal entries $A_{ii}=0$. 
The pure $2$-spin spherical Sherrington--Kirkpatrick (SSK) model with no external field is a disordered system defined by the 
random Hamiltonian 
\beq
	H_N^{\SSK} (\bss) := \frac1{\sqrt{N}} \langle \bss, A \bss \rangle 
	= \frac1{\sqrt{N}} \sum_{i,j=1}^N A_{ij} \sigma_i\sigma_j
\eeq
for the spin variables on the sphere,  $\bss \in S_{N-1}$, where $S_{N-1} := \{ \bss \in \R^N : \| \bss \|^2 = N \}$.
For the history and the existing results on the model, including the proof of the Parisi formula, we refer to \cite{KosterlitzThoulessJones, CrisantiSommers, TalagrandParisiSpher, PanchekoTalagrand2007} and references therein.

We are interested in the spherical spin system with (random) Hamiltonian
\beq \label{eq:defofH}
	H_N (\bss) = H_N^{\SSK}(\bss) + H_N^{\CW}(\bss), \qquad \bss \in S_{N-1}, 
\eeq
where the Curie--Weiss (CW) Hamiltonian with coupling constant $J$ is defined by 
\beq
	H_N^{\CW}(\bss) := \frac{J}{N}  \sum_{i,j=1}^N \sigma_i\sigma_j 
	=  \frac{J}{N} \left( \sum_{i=1}^N \sigma_i \right)^2. 
\eeq
Note that $H_N^{\CW}$ is large in magnitude when all $\sigma_i$ have the same sign. 
The Hamiltonian $H_N$ is similar to the SSK model with external field,  
\begin{equation}
	H_N^{\exte}(\bss)= H_N^{\SSK}(\bss)+ h \sum_{i=1}^N \sigma_i.
\end{equation} 
See \cite{Chen2014} for a relation between these two Hamiltonians. 

The main result of this paper is a limit theorem for the free energy at positive temperature $1/\beta>0$ with positive coupling constant $J$. 
This paper is an extension of our previous paper \cite{BaikLee} in which we obtained limit theorems for pure $2$-spin SSK model (with $J=0$). 

\medskip

Before we state our result, we first summarize the known limit theorems for the free energy of SSK and also the Sherrington--Kirkpatrick (SK) model.
Results for (3)--(7) were 
established in the same year 2015. 
We indicate the limiting distribution and the order of fluctuations of the free energy. 
These results assume that $A_{ij}$ are standard Gaussian. 
However, the result (1) was extended to non-Gaussian $A_{ij}$ in \cite{GuerraToninelli2002, CarmonaHu}, 
and the results (3) and (4) were obtained for general normalized random variables. 

\subsubsection*{No external field.}
When there is no external field ($h=0$), the following are known for pure $p$-spin models: 
\begin{enumerate}[(1)]
	\item Pure $2$-spin SK model for $\beta\in (0, \beta_c)$: Gaussian, $O(N^{-1})$ \cite{AizenmanLebowitzRuelle, FrohlochZegarlinski1987, CometsNeveu1995}
	\item Pure $p$-spin SK model for $\beta\in (0, \beta^{(p)}_{c})$: Gaussian, $O(N^{-p/2})$ \cite{BovierKurkovaLowes}
	\item Pure $2$-spin SSK model for $\beta\in (0, \beta_c)$: Gaussian,  $O(N^{-1})$ \cite{BaikLee}
	\item Pure $2$-spin SSK model for $\beta\in (\beta_c, \infty)$: $\TW_1$,  $O(N^{-2/3})$ \cite{BaikLee}
	\item Pure $p$-spin SSK model for $p\ge 3$  at $\beta=\infty$: Gumbel,  $O(N^{-1})$ \cite{SubagZeitouni2015}
\end{enumerate}
Here, $\TW_1$ denotes the GOE Tracy-Widom distribution. 
The numbers $\beta^{(p)}_c$, $p\ge 3$, and $\beta_c$ are certain critical values. 
A results for pure $p$-spin SSK model with $p\ge 3$ for low temperature is given in \cite{Subag2016}.

We also remark that the free energy of the pure $2$-spin SSK model at zero temperature, $\beta=\infty$, is, after modifying the definition slightly, equal to the rescaled largest eigenvalue of symmetric random matrix $A$. 
Hence, from the well-known result in the random matrix theory \cite{So1999, TV2010, EYY}, this case also corresponds to $\TW_1$ with $O(N^{-2/3})$ fluctuation. 
Comparing with (5), we find that at zero temperature the free energy fluctuates differently for $p=2$ and $p\ge 3$. 
There is an important difference of $p=2$ case and $p\ge 3$ case: the number of critical points for the Hamiltonian (subject to the constraint $\|\bss\|^2=N$) is $2N$ for $p=2$, but is exponential in $N$ for $p\ge 3$ as proved in \cite{ABC2013} (for upper bound) and \cite{Subag2015} (for lower bound). The critical points are the eigenvectors of $A$ for $p=2$, hence strongly correlated, whereas the extremal process of critical points converges in distribution to a Poisson point process for $p \ge 3$. See Theorem 1 of \cite{SubagZeitouni2015} for more detail.

\subsubsection*{Positive external field.}

The behavior of the free energy changes drastically under the presence of an external field ($h>0$). For this case, the more complicated model with mixed $p$-spin interactions were also studied. 
\begin{enumerate}
	\item[(6)] Mixed $p$-spin SK and SSK models (without odd $p$-interactions for $p\ge 3$) with $h>0$ for all $\beta\in (0, \infty)$: Gaussian, $O(N^{-1/2})$ \cite{ChenDeyPanchenkp2015}
	\item[(7)] Mixed $p$-spin SSK model with $h>0$ at $\beta=\infty$: Gaussian, $O(N^{-1/2})$ \cite{ChenSen2015} 
\end{enumerate}
Note that the fluctuations are significantly increased from the $h=0$ case. 

It is interesting to scale $h\to 0$ with $N$ and consider a transition from (6) and (7) to (4) or (5). 
By matching the variance when $h=0$ and $h>0$, it is expected that the transitional scaling is $h=O(N^{-1/6})$ for $p=2$.  
For the large deviation analysis for the pure $2$-spin SSK model and discussions for such $h$, we refer to \cite{FyodorovLeDoussal2014} for deterministic $h$ and \cite{DemboZeitouni2015} for random $h$.

\subsection{Definitions}

We first define a Hamiltonian that generalizes $H_N$ in \eqref{eq:defofH}.

\begin{defn}[Interactions] \label{def:M} \label{def:Wigner} \label{cond:nonzero}
Let $A_{ij}$, $1\le i\le j$, be independent real random variables satisfying the following conditions:
\begin{itemize}
\item All moments of $A_{ij}$ are finite and $\mathbb{E}[A_{ij}]=0$. 
\item For all $i<j$, $\mathbb{E}[A_{ij}^2]=1$, $\mathbb{E}[A_{ij}^3]=W_3$, and $\mathbb{E}[A_{ij}^4]=W_4$ for some constants $W_3\in \R$ and $W_4> 0$. 
\item For all $i$, $\mathbb{E}[A_{ii}^2]=w_2$ for a constant $w_2\ge 0$. 
\end{itemize}
Set $A_{ji}=A_{ij}$ for $i<j$, and set $A=(A_{ij})_{i,j=1}^N$. Let
\beq \label{defofMma}
M_{ij} = \frac{A_{ij}}{\sqrt N} + \frac{J}{N} \quad (i \neq j), \qquad M_{ii} = \frac{A_{ii}}{\sqrt N} + \frac{J'}{N}
\eeq
for some ($N$-independent) non-negative constants $J$ and $J'$. 
Set $M=(M_{ij})_{i,j=1}^N$. We call $M$ a Wigner matrix with non-zero mean. 
\end{defn}

The Hamiltonian in \eqref{eq:defofH} is obtained by setting $A_{ii}=0$ and $J'=0$.

\begin{defn}[Free energy] \label{def:partition}
Define the Hamiltonian $H_N (\bss) =  \langle \bss, M \bss \rangle$ on sphere $\|\bss\|=\sqrt{N}$.
For $\beta>0$, define the partition function and the free energy as 
\beq \label{eq:partition}
	Z_N= Z_N(\beta)= \int_{S_{N-1}} e^{\beta H_N(\bss)} \dd \omega_N(\bss), 
	\qquad 
	F_N= F_N(\beta)= \frac1{N} \log Z_N, 
\eeq
where $\dd\omega_N$ is the normalized uniform measure on the sphere $S_{N-1} = \{ \bss \in \R^N : \| \bss \|^2 = N \}$.
\end{defn}

\begin{rem}
We may also consider complex matrix $M$. In this case, the real and the complex entries are independent and we add an extra condition that $\E A_{ij}^2=0$. 
The results in this paper have corresponding results for complex $M$, but we do not state them here.
\end{rem}

\subsection{Results} \label{sec:resultsm}

The following is the main result. 
The case when $J=0$ was proved previously in \cite{BaikLee}.

\begin{thm} \label{thm:main}
The following holds as $N\to \infty$ where all the convergences are in distribution. 
The notation $\caN(a,b)$ denotes Gaussian distribution with mean $a$ and variance $b$ and $\TW_1$ is the GOE Tracy--Widom distribution. 
\begin{enumerate}[(i)]
\item (Spin glass regime) If $\beta > \frac{1}{2}$ and $J < 1$, then 
\beq\label{eq:thmspin1low}
	\frac{1}{\beta-\frac{1}{2}} N^{2/3} \left( F_N - F(\beta) \right) \Rightarrow \TW_{1}.
\eeq

\item (Paramagnetic regime) If $\beta < \frac{1}{2}$ and  $ \beta < \frac{1}{2J}$, then 
\beq\label{eq:thmspin1high}
	N \left( F_N - F(\beta) \right) \Rightarrow \mathcal{N} \left(f_1, \alpha_1 \right). 
\eeq

\item (Ferromagnetic regime) If $J > 1$ and $\beta > \frac{1}{2J}$, then 
\beq \label{eq:thmspin1mid}
	\sqrt{N} \left( F_N - F(\beta) \right) \Rightarrow \mathcal{N} \left(0, \alpha_2 \right). 
\eeq
\end{enumerate}
The leading order limit of the free energy is given by
\beq \label{Flimit}
	F(\beta)= \begin{cases}
	2\beta - \frac12 \log(2\beta) -\frac34  \qquad &\text{for (i)} \\
	\beta^2	\qquad &\text{for (ii)} \\
	\beta \left( J + \frac{1}{J} \right)  - \frac{1}{2} \log (2\beta J) - \frac{1}{4J^2} -  \frac{1}{2},  \quad &\text{for (iii).}
	\end{cases}
\eeq
The parameters for case (ii) in \eqref{eq:thmspin1high} are
\beq \label{eq:f1}
	f_1 = \frac{1}{4} \log(1-4\beta^2) + \beta^2(w_2-2) + 2\beta^4(W_4-3) - \beta J - \frac{1}{2} \log(1-2\beta J) + \beta J'
\eeq
and
\beq \label{eq:alpha1}
	\alpha_1 = -\frac{1}{2} \log (1-4\beta^2) + \beta^2 (w_2-2) + 2\beta^4 (W_4-3),
\eeq
and the parameter for case (iii) in \eqref{eq:thmspin1mid} is
\beq
	\alpha_2 = 2 \left( 1 - \frac{1}{J^2} \right) \left(\beta - \frac{1}{2J} \right)^2.
\eeq
\end{thm}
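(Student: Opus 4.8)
The plan is to reduce everything to a contour-integral representation of the partition function and then perform steepest-descent analysis, case by case, with the exponent controlled by the linear eigenvalue statistics of the spiked Wigner matrix $M$. First I would use the standard trick for spherical integrals: writing $\dd\omega_N$ as a Gaussian integral restricted to the sphere and applying a Fourier/Laplace representation of the delta function $\delta(\|\bss\|^2-N)$, one obtains
\beq
	Z_N = C_N \int_{\gamma-\ii\infty}^{\gamma+\ii\infty} \exp\left( N G_N(z) \right) \dd z,
	\qquad
	G_N(z) = \beta z - \frac{1}{2N}\sum_{i=1}^N \log\left( z - \frac{\mu_i}{2\beta}\cdot 2\beta \right) \ \text{(suitably normalized)},
\eeq
where $\mu_1 \ge \mu_2 \ge \cdots \ge \mu_N$ are the eigenvalues of $2\beta M$ (I am being schematic about the exact affine normalization; the point is that $G_N$ is an explicit function of the empirical spectral measure of $M$ together with its top eigenvalue). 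The contour must be taken to the right of $\mu_1$, so the behavior of $G_N$ near its critical point $\widehat{z}$ — and whether $\widehat{z}$ collides with the edge $\mu_1$ — is exactly what separates the three regimes.

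Next I would carry out the saddle-point analysis in each regime. Let $G(z)$ be the deterministic limit of $G_N(z)$, obtained by replacing the empirical measure by the semicircle law (the spike contributes only at lower order except when it detaches). In the \emph{paramagnetic regime} (ii) the critical point $\widehat{z}$ sits strictly to the right of the semicircle edge $2$ and strictly to the right of the BBP threshold for the rank-one perturbation, so the spike does not escape; then $F_N - F(\beta)$ is governed by $\frac1N\big[\sum_i \log(\widehat z - \lambda_i) - N\int \log(\widehat z - x)\dd\rho_{sc}(x)\big]$ plus the mean-interaction correction, i.e.\ by a linear statistic of the Wigner matrix evaluated at the fixed analytic test function $x \mapsto \log(\widehat z - x)$. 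Here I would invoke the CLT for linear statistics of rank-$1$ spiked Wigner matrices promised in the abstract (proved earlier in the paper): it yields a Gaussian limit, and its mean and variance, specialized to $\log(\widehat z - \cdot)$, produce $f_1$ and $\alpha_1$; the terms $\beta^2(w_2-2)$ and $2\beta^4(W_4-3)$ come from the diagonal-variance and fourth-cumulant corrections in that CLT, while $-\beta J - \frac12\log(1-2\beta J) + \beta J'$ comes from the shift of $\widehat z$ and the deterministic part due to the rank-one mean term in $M$. In the \emph{ferromagnetic regime} (iii), $J$ is large enough that the top eigenvalue $\mu_1$ detaches from the bulk and sits at the outlier location $\approx 2\beta(J + 1/J)$; the saddle point is then pinned essentially at this outlier, $F(\beta)$ acquires the corresponding value in \eqref{Flimit}, and the fluctuation is driven by the $O(N^{-1/2})$ Gaussian fluctuation of the outlier eigenvalue itself — this is the classical result that a rank-one spike's outlier has $\sqrt N$-Gaussian fluctuations with variance proportional to $(1 - 1/J^2)$, which reproduces $\alpha_2$ after multiplying by the square of the derivative $\partial/\partial(\text{outlier})$ of the free-energy expression, namely $(\beta - \tfrac1{2J})$. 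In the \emph{spin glass regime} (i), $\beta > \tfrac12$ forces the saddle $\widehat z$ to hit the spectral edge $2$ (while $J < 1$ keeps the spike inside the bulk), so the integral is dominated by the edge; the analysis of $\frac1N\log Z_N$ then localizes on the largest few eigenvalues and $N^{2/3}(F_N - F(\beta))$ converges to $(\beta - \tfrac12)$ times the fluctuation of $\mu_1$, which is $\TW_1$ by the edge universality for Wigner matrices (the rank-one perturbation with $J<1$ does not affect the $\TW_1$ limit). This last part parallels the $J=0$ analysis of \cite{BaikLee}, so I would reuse that machinery, checking only that the subcritical spike is harmless.

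The technical backbone common to all three cases is a rigorous steepest-descent estimate: one must show that the contour integral is, up to negligible error, captured by a neighborhood of the saddle (or of the edge), and that the random exponent $N G_N(z)$ concentrates around $N G(z)$ with fluctuations of the stated order. For regimes (ii) and (iii) this requires (a) rigidity of eigenvalues and control of the outlier location with its fluctuations, and (b) the linear-statistics CLT with the precise non-universal corrections; for regime (i) it requires edge rigidity and the Tracy--Widom limit. The main obstacle I anticipate is regime (i) — handling the integral when the saddle point merges with the soft edge. There the Gaussian (quadratic) approximation of the exponent degenerates, the effective width of the relevant region is $N^{-2/3}$ rather than $N^{-1/2}$, and one must show that $\frac1N\log\int e^{NG_N(z)}\dd z$ is controlled by $\mu_1$ alone rather than by a genuinely many-eigenvalue contribution; this is precisely the delicate analysis of \cite{BaikLee}, and the new work is to confirm that turning on a subcritical ferromagnetic coupling $J<1$ perturbs neither the location of the relevant saddle nor the $\TW_1$ limit. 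A secondary subtlety is verifying that the critical lines $\beta = \tfrac12$, $\beta = \tfrac1{2J}$, and $J=1$ are exactly where the saddle transitions occur, so that "away from critical values" genuinely covers the complement of these curves, and that the formulas in \eqref{Flimit} match continuously (they should, since $F(\beta)$ is the Legendre-type transform value at the saddle, which moves continuously).
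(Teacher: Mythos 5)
Your proposal follows essentially the same route as the paper: the Kosterlitz--Thouless--Jones contour representation, a steepest-descent analysis with the saddle either pinned near $\mu_1$ (regimes (i) and (iii)) or strictly to the right of the spectrum (regime (ii)), and the same three external inputs --- edge rigidity with the Tracy--Widom limit for the subcritical spike, the $\sqrt N$-Gaussian fluctuation of the outlier, and the CLT for linear statistics of the rank-one spiked Wigner matrix applied to $x\mapsto\log(\widehat z-x)$. The one small slip is attributing the terms $-\beta J-\tfrac12\log(1-2\beta J)+\beta J'$ to a shift of the saddle: the deterministic saddle $\widehat z=2\beta+\tfrac1{2\beta}$ is $J$-independent, and these terms come entirely from the $J$- and $J'$-dependent part of the \emph{mean} in the linear-statistics CLT.
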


If we set $T = \frac{1}{2\beta}$, then the trichotomy corresponds to the cases $\max \{ T, J, 1 \} = 1$, $\max \{T, J, 1\} = T$, and $\max \{ T, J, 1 \} = J$,  respectively.
See Figure \ref{fig:diagram} for the phase diagram. 

\begin{figure}
\centering
\begin{tikzpicture}[scale=0.9]
\draw[thick, ->] (0,0) -- (4,0);
\draw[thick, ->] (0,0) -- (0,4);
\draw[thick] (2, 0) -- (2,2);
\draw[thick] (0, 2) -- (2,2);
\draw[thick] (2, 2) -- (3.6, 3.6);
\draw node at (4.3,0) {$J$};
\draw node at (-0.5, 3.6) {$\frac1{2\beta}$};
\draw node at (-0.2, -0.25) {$0$};
\draw node at (2, -0.25) {$1$};
\draw node at (-0.25, 2) {$1$};
\draw node at (1,1) {spin glass};
\draw node at (1.3,3) {paramagnetic};
\draw node at (3.5,1) {ferromagnetic};
\end{tikzpicture}
\caption{Phase diagram}
\label{fig:diagram}
\end{figure}
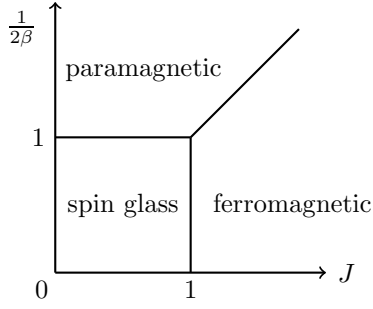

The above result implies that 
\beq
	F_N(\beta) \to F(\beta)
\eeq
in probability for $(J, \beta)$ not on the critical lines.
The formula \eqref{Flimit} of $F(\beta)$, and hence also the phase diagram, were obtained by Kosterlitz, Thouless, and Jones \cite{KosterlitzThoulessJones}.
Their proof is not completely rigorous but can be made rigorous by using the estimates that were developed later in random matrix theory. 
In this paper, we make their analysis rigorous and improve it to obtain the results on the fluctuations. 
Note that even though 
the paramagnetic regime and the ferromagnetic regime both have a Gaussian as the limiting distribution, 
the order of the fluctuations are different. 
The reason for this can be seen from the following theorem of which 
Theorem \ref{thm:main} is a consequence. 


\begin{thm} \label{thm:main2}
Let $\mu_1\ge \mu_2\ge\dots\ge \mu_N$ be the eigenvalues of Wigner matrix with non-zero mean $M$ in Definition \ref{def:M}. 
For every $\epsilon>0$ and $D > 0$, the following holds as $N\to \infty$ with probability higher than $1-N^{-D}$.
\begin{enumerate}[(i)]
\item (Spin glass regime) If $\beta > \frac{1}{2}$ and $J < 1$, then 
\beq\label{eq:thmspin1low002}
	F_N=  F(\beta) + \left( \beta-\frac{1}{2} \right) \left( \mu_1- 2 \right)
	+ O(N^{-1+\epsilon}).
\eeq

\item (Paramagnetic regime) If $\beta < \frac{1}{2}$ and $ \beta < \frac{1}{2J}$, then
\beq \label{eq:thmspin1high002} \begin{split}
	F_N 
= 2\beta^2   - \frac12 \log (2\beta)   - \frac{1}{2N} \sum_i g(\mu_i)
+ \frac1{N} \left( \log(2\beta) -\frac12  \log \left( -\frac1{N} \sum_i g''(\mu_i) \right) \right)  + O(N^{-2+\epsilon}) 
\end{split} \eeq
where
\beq
	g(x) = \log \left( 2\beta+\frac1{2\beta} - x \right).
\eeq

\item (Ferromagnetic regime) If $J > 1$ and $\beta > \frac{1}{2J}$, then 
\beq \label{eq:thmspin1mid002}
	F_N = F(\beta) + \left(\beta - \frac1{2J} \right) \left( \mu_1 - J- \frac1{J} \right) + O(N^{-1}\log N).
\eeq
\end{enumerate}
\end{thm}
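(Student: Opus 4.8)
The plan is to start from the exact integral representation of the partition function and reduce it to a one-dimensional contour integral that can be analyzed by the method of steepest descent, with the eigenvalues of $M$ entering as parameters. Diagonalizing $M = O^T \Lambda O$ and writing $\bss = O^T \bsx$, the spherical constraint is preserved and $H_N(\bss) = \sum_i \mu_i x_i^2$. Using the standard Laplace-type identity $\dd\omega_N$ restricted to $\|\bsx\|^2 = N$ combined with a Gaussian integral, or directly the formula from \cite{BaikLee}, one obtains
\beq \label{eq:Gintegral}
	Z_N = C_N \int_{\gamma - \ii\infty}^{\gamma + \ii\infty} e^{N G(z)} \, \dd z,
	\qquad
	G(z) = \beta z - \frac1{2N}\sum_{i=1}^N \log\!\left( z - \frac{\mu_i}{2\beta} \cdot 2\beta \right),
\eeq
(with the contour to the right of $\mu_1/(2\beta)$ or of all $\mu_i$ as appropriate, and $C_N$ an explicit constant whose logarithm contributes the $\frac1N(\dots)$ terms); here I am writing $G$ schematically and the exact normalization is routine. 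The key structural point is that $G$ depends on the disorder only through the empirical eigenvalue distribution and, crucially, through the top eigenvalue $\mu_1$, which by the spiked-matrix results sits at distance $O(N^{-2/3})$ from $2$ when $J<1$ and near $J + 1/J$ when $J>1$.

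Next I would locate the saddle point. Setting $G'(z) = 0$ gives $\beta = \frac1{2N}\sum_i \frac{1}{z - \mu_i/(2\beta)\cdot 2\beta}$; replacing the empirical measure by the semicircle law, the limiting equation is $\beta = \frac12 m_{sc}(\cdot)$-type, whose relevant root $\hat z$ is at the edge $2$ precisely when $\beta = \frac12$, inside the bulk-free region when $\beta < \frac12$, and pushed out past the edge when $\beta > \frac12$. This is exactly the mechanism producing the trichotomy. In the \emph{paramagnetic regime} $\hat z$ is bounded away from the spectral edge, so $G$ is analytic near $\hat z$ and a genuine Gaussian (second-order) steepest-descent expansion applies: the leading term gives $F(\beta)$, the $O(1/N)$ correction is governed by $G''(\hat z)$, and one must then insert the rigidity estimates and the CLT for linear eigenvalue statistics (the tool the abstract advertises) to convert $\frac1N\sum g(\mu_i)$ and $\frac1N\sum g''(\mu_i)$ into the stated expression — this yields \eqref{eq:thmspin1high002}. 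In the \emph{spin glass regime} the unconstrained saddle wants to leave the spectrum but is blocked by the branch point at $\mu_1/(2\beta)\cdot 2\beta$ — wait, more precisely by $\mu_1$ — so the contour must be pinned just to the right of $\mu_1$; the integral is then dominated by the endpoint behavior near $\mu_1$, and a careful local analysis (the same one used in \cite{BaikLee} for $J=0$) shows $F_N = \text{(bulk part)} + (\beta - \tfrac12)(\mu_1 - 2) + O(N^{-1+\epsilon})$, since the edge of the semicircle contributes the constant part of $F(\beta)$ and the fluctuation rides entirely on $\mu_1$. In the \emph{ferromagnetic regime} $\mu_1$ is an isolated outlier at distance $\Theta(1)$ from the bulk, the saddle equation has its relevant solution pinned at $\mu_1$, and a Laplace analysis around that isolated point gives the $\sqrt{N}$-scale behavior \eqref{eq:thmspin1mid002}, with the new subtlety that one must control the single outlier eigenvalue and its eigenvector overlap rather than an edge.

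The main obstacle is the \emph{spin glass / edge case}: near $z = \mu_1$ the function $G$ has a logarithmic singularity from the $i=1$ term and a square-root-type singularity from the accumulation of the remaining $\mu_i$ at the edge, so neither a plain Laplace nor a plain steepest-descent expansion is valid, and one needs the precise local law / rigidity for the eigenvalues within $N^{-2/3+\epsilon}$ of the edge, plus control of the error when the empirical Stieltjes transform is replaced by $m_{sc}$ uniformly up to the edge. This is where the $N^{-1+\epsilon}$ error in \eqref{eq:thmspin1low002} comes from, and it is the step that must be done most carefully; fortunately the structure is parallel to \cite{BaikLee}, the only change being the $J/N$ rank-one shift, which perturbs $\mu_1$ by the spiked-matrix BBP mechanism but — since $J<1$ — does not detach it from the edge, so the $J$-dependence enters only through lower-order terms absorbed into the error. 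A secondary technical point, needed in all three regimes, is justifying the deformation of the vertical contour in \eqref{eq:Gintegral} to pass through the real saddle and estimating the tails, which is routine given polynomial bounds on $\|M\|$ and the smallest relevant gap.
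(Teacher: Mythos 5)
Your proposal follows essentially the same route as the paper: the Kosterlitz--Thouless--Jones contour-integral representation of $Z_N$, followed by steepest descent in which the trichotomy is governed by whether the saddle is pinned next to $\mu_1$ (spin glass and ferromagnetic regimes) or sits at distance $O(1)$ to the right of the whole spectrum (paramagnetic regime), with rigidity and the spiked-matrix dichotomy for $\mu_1$ supplying the eigenvalue control; the paper carries this out by verifying the hypotheses of the $J=0$ analysis of \cite{BaikLee} and adding the two new lemmas needed for the ferromagnetic case. One small correction: the CLT for linear statistics is not needed for Theorem \ref{thm:main2}(ii) itself --- the sums $\frac1N\sum_i g(\mu_i)$ and $\frac1N\sum_i g''(\mu_i)$ appear verbatim in \eqref{eq:thmspin1high002}, and the CLT enters only when passing to Theorem \ref{thm:main}(ii) --- and no eigenvector-overlap control is required in the ferromagnetic regime, since $G$ depends on $M$ only through its eigenvalues.
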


Intuitively, 
the free energy is dominated by the ground state, $\mu_1$, at low temperature, and by all eigenvalues at high temperature. 
The above result makes this intuition precise: in the spin glass regime (i) and the ferromagnetic regime (iii), the fluctuations of the free energy are governed by the ground state, the largest eigenvalue $\mu_1$, while in the ferromagnetic regime (ii), they are governed by all of the eigenvalues in the form of the linear statistics $\sum_i g(\mu_i)$ of a specific function $g$.

The Wigner matrix with non-zero mean $M$ is a rank 1 case of so-called a spiked random matrix. A spiked random matrix is a random matrix perturbed additively by a deterministic matrix of fixed $N$-independent rank. 
Spiked random matrices were studied extensively in random matrix theory \cite{Baik-Ben_Arous-Peche05, Feral-Peche07, CDF2012, PRS2013, KY2013_iso}. 
Since the perturbation has a rank independent of $N$, the semi-circle law (see \eqref{semicirlw} below) still holds. 
However, the top eigenvalues may have different limit theorems. 
For the rank 1 case $M$, it was shown in Theorem 1.3 of \cite{PRS2013} that
\beq \label{spikedr1fl} 
	\begin{cases} N^{2/3} ( \mu_1 - 2 ) \Rightarrow \TW_1, \qquad &J<1 \\
	N^{1/2} \left(\mu_1 - (J + \frac{1}{J}) \right) \Rightarrow \caN(0, 2(1-\frac{1}{J^2})),\qquad &J>1 .
\end{cases}
\eeq
(See also Theorem 3.4 of \cite{CDF2012}.) For Hermitian matrix, \eqref{spikedr1fl} was first proved in \cite{Baik-Ben_Arous-Peche05}.
When $J<1$, then the perturbation has little effect on $\mu_1$. 
But when $J>1$, $\mu_1$ becomes an ``outlier'' in the sense that it is separated from the support of the semi-circle and as a consequence, becomes ``freer'' to fluctuate; the fluctuation order $N^{-1/2}$ is bigger in this case. 
Theorem \ref{thm:main} (i) and (iii) follow directly from Theorem \ref{thm:main2} and  \eqref{spikedr1fl}.

\subsection{Linear statistics for Wigner matrix with non-zero mean}\label{sec:linstsu}

In order to prove Theorem \ref{thm:main} (ii) from Theorem \ref{thm:main2} (ii), we need a limit theorem for the linear statistic $\sum_i g(\mu_i)$. 
It is a well-known result in random matrix theory that for mean-zero Wigner matrices (i.e. $J=0$ case), the linear statistics converge to Gaussian distributions with scale $O(1)$ instead of the classical diffusive $O(N^{1/2})$ scale for the sum of independent random variables \cite{Johansson98, SiSo, BS2004, BY2005, LP}.
The main technical component of this paper is 
the central limit theorem for the linear statistics of Wigner matrix with non-zero mean 
(i.e. $J>0$ case). 
The next theorem shows that the spike (i.e. $J>0$) only changes the mean of the limiting Gaussian distribution; the variance of the Gaussian distribution is same for all $J\ge 0$. 
We remark that the change of the mean due to the spike is already known for spiked sample covariance matrices \cite{WSY, PMC}.

We prove the following result for $J>0$. 
Set
\beq \label{Chebyshev formula}
	\tau_\ell(\varphi)= \frac1{\pi} \int_{-2}^2 \varphi(x) \frac{T_\ell(x/2)}{\sqrt{4-x^2}} \, \dd x = \frac1{2\pi} \int_{-\pi}^\pi \varphi(2\cos\theta) \cos(\ell\theta) \, \dd \theta
\eeq
for $\ell=0,1,2,\dots$, where $T_\ell(t)$ are the Chebyshev polynomials of the first kind; $T_0(t)=1$, $T_1(t)=t$, $T_2(t)=2t^2-1$, $T_3(t)=4t^3-3t$, $T_4(t)=8t^4-8t^2+1$, etc.

\begin{thm}[Linear statistics of Wigner matrix with non-zero mean]\label{thm:linear}
Let $M$ be an $N \times N$ Wigner matrix with non-zero mean as in Definition \ref{def:M}. Denote by $\mu_1 \geq \mu_2 \geq \dots \geq \mu_N$ the eigenvalues of $M$. Set
\beq \label{hat J}
\widehat J =
	\begin{cases}
	J + J^{-1} & \text{ if } J>1 \,, \\
	2 & \text{ if } J\leq 1 \,.
	\end{cases}
\eeq
Then, for any function $\varphi : \R \to \R$ that is analytic in an open neighborhood of $[-2, \widehat{J} \, ]$ and has compact support, the random variable
\beq
T_N(\varphi) := \sum_{i=1}^N \varphi(\mu_i) - N \int_{-2}^2 \varphi(x) \frac{\sqrt{4-x^2}}{2\pi} \dd x
\eeq
converges in distribution to the Gaussian distribution 
with mean $M(\varphi)$ and variance $V(\varphi)$, where
\beq \label{eq:Mvarfo}
\begin{split}
M(\varphi) &= \frac{1}{4} \left( \varphi(2) + \varphi(-2) \right) -\frac{1}{2} \tau_0(\varphi) + J' \tau_1(\varphi) + (w_2-2) \tau_2(\varphi) + \left( W_4 - 3 \right) \tau_4(\varphi) \\
&\qquad + \frac{1}{2\pi \ii} \oint  \varphi \left(-s-\frac{1}{s} \right)  \frac{J^2 s}{1 + Js}  \dd s
\end{split} \eeq
and
\beq \label{eq:Vvarfo}
V(\varphi) = (w_2 -2) \tau_1(\varphi)^2 + (W_4 -3) \tau_2(\varphi)^2 + 2 \sum_{\ell=1}^{\infty} \ell \tau_{\ell}(\varphi)^2.
\eeq
The contour for the integral in \eqref{eq:Mvarfo} is any simple closed contour containing $0$ inside in the 
slit disk $\{ |s|<1\}\setminus [-1, -1/J]$ in which $\varphi \left(-s-\frac{1}{s} \right)$ is analytic. (The analyticity condition of $\varphi$ implies that there is such a contour.) 
\end{thm}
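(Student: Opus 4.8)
\textbf{Proof proposal for Theorem \ref{thm:linear}.}

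The plan is to write the linear statistic as a contour integral of the Stieltjes transform and carry out the now-standard two-step CLT machinery, but with a careful treatment of the rank-one spike. Let $m_N(z) = \frac1N \tr (M-z)^{-1}$ be the Stieltjes transform of the empirical spectral measure and $m(z) = \frac12(-z+\sqrt{z^2-4})$ the Stieltjes transform of the semicircle law. For $\varphi$ analytic in a neighborhood of $[-2,\widehat J\,]$ with compact support, choose a contour $\Gamma$ enclosing $[-2,\widehat J\,]$ (hence enclosing all $\mu_i$ with overwhelming probability, using the rigidity of the bulk eigenvalues together with \eqref{spikedr1fl} for the outlier when $J>1$) on which $\varphi$ is analytic, and write
\beq
T_N(\varphi) = -\frac{1}{2\pi\ii}\oint_\Gamma \varphi(z)\, N\big(m_N(z)-m(z)\big)\,\dd z + (\text{negligible cutoff error}),
\eeq
so that everything reduces to the fluctuations of the process $z\mapsto N(m_N(z)-m(z))$ for $z\in\Gamma$. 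The first main step is to decompose $M = W + \frac{J}{\sqrt N}\cdot\frac{1}{\sqrt N}\lone\lone^{\mathsf T} + \frac{J'-J}{N}I$, i.e. $M$ is a mean-zero Wigner matrix $W$ (with the prescribed second, third, fourth moments and diagonal variance $w_2$) plus a rank-one perturbation of operator norm $O(1)$ plus a diagonal shift of norm $O(N^{-1})$. The diagonal shift contributes only an $O(N^{-1})$ deterministic correction to $m_N$ that I will need to track in the mean but which does not affect the variance. For the rank-one part, the resolvent identity gives an exact formula
\beq
m_N(z) = m_N^W(z) - \frac{1}{N}\,\frac{ J\,\langle \lone, (W-z)^{-1}R(z)(W-z)^{-1}\lone\rangle/N }{1 + J\,\langle \lone,(W-z)^{-1}\lone\rangle/N } + \cdots
\eeq
(Sherman–Morrison), where the correction is governed by the scalar quantity $\frac1N\langle\lone,(W-z)^{-1}\lone\rangle$, which by the isotropic local law concentrates on $m(z)$ with fluctuations of size $N^{-1/2}$ times a Gaussian.

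The second main step is to establish the joint CLT. For the Wigner part $m_N^W(z)$ one invokes the known CLT for linear eigenvalue statistics of mean-zero Wigner matrices \cite{Johansson98, BY2005, LP}: the process $N(m_N^W(z)-m(z))$ converges to a Gaussian process whose covariance, after the contour integration against $\varphi$, produces exactly the $\tau_\ell$-expansion in \eqref{eq:Vvarfo} and the "Wigner" part of the mean in \eqref{eq:Mvarfo} (the $\frac14(\varphi(2)+\varphi(-2))$, $-\frac12\tau_0$, $(w_2-2)\tau_2$, $(W_4-3)\tau_4$ terms, with the $J'$ being subsumed once we restore the diagonal shift). The new contribution is the rank-one term. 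Here the point, already visible in the Sherman–Morrison formula, is that the denominator $1+J\,\frac1N\langle\lone,(W-z)^{-1}\lone\rangle = 1+Jm(z) + O(N^{-1/2})$ is bounded away from zero on $\Gamma$ precisely when $z$ stays off the point $-s-1/s$ with $1+Js=0$, i.e. off the outlier location $J+1/J$ — which is why the contour in \eqref{eq:Mvarfo} must avoid $[-1,-1/J]$ under the substitution $z=-s-1/s$. The leading deterministic part of the rank-one correction, integrated against $\varphi$, yields after the change of variables $z = -s - 1/s$ exactly
\beq
\frac{1}{2\pi\ii}\oint \varphi\!\left(-s-\tfrac1s\right)\frac{J^2 s}{1+Js}\,\dd s,
\eeq
the extra term in $M(\varphi)$; and — this is the key structural claim — the \emph{fluctuating} part of the rank-one correction is of order $N^{-1/2}$ in $m_N$, hence of order $N^{1/2}\cdot N^{-1}\cdot\sqrt N\cdot\frac{1}{\sqrt N}=o(1)$... more precisely the rank-one correction to $T_N(\varphi)$ has a deterministic $O(1)$ piece (computed above) plus a random piece of size $O(N^{-1/2+\epsilon})$, so it does not alter the limiting variance. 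Thus $V(\varphi)$ is the same as for $J=0$, and only $M(\varphi)$ picks up the contour-integral term. When $J\le 1$ there is no outlier, but the same formula holds: the contour $\{|s|<1\}$ does not enclose $-1/J$ (since $1/J\ge 1$), so the added integral can be taken to vanish or, equivalently, reduces to the $J\le1$ normalization $\widehat J = 2$.

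To make the above rigorous I would (i) fix the contour and control the cutoff/truncation near the real axis using the local semicircle law and eigenvalue rigidity, reducing to $z$ at distance $\gtrsim N^{-1+\delta}$ from the spectrum (or even at distance $\gtrsim 1$, since $\varphi$ is analytic on a full neighborhood of $[-2,\widehat J\,]$ we may keep $\Gamma$ at macroscopic distance, which considerably simplifies the analysis); (ii) prove concentration of $\frac1N\langle\lone,(W-z)^{-1}\lone\rangle$ about $m(z)$ with a CLT-type expansion, via the isotropic local law \cite{KY2013_iso} plus a martingale/cumulant expansion to identify its limiting Gaussian fluctuation; (iii) combine with the Wigner linear-statistics CLT, checking that the cross-covariance between $m_N^W(z)$ and the rank-one correction vanishes in the limit (it does, because the correction's fluctuation is lower order); (iv) perform the change of variables $z=-s-1/s$ and residue computation to match the stated $M(\varphi)$. \textbf{The main obstacle} I anticipate is step (iii) together with the precise bookkeeping in step (ii): one must show that the rank-one term contributes \emph{only} to the mean and not to the variance, which requires an honest second-order expansion of the Sherman–Morrison correction — tracking the $O(N^{-1/2})$ fluctuation of the quadratic form $\frac1N\langle\lone,(W-z)^{-1}\lone\rangle$ and verifying that, after multiplication by the $O(1)$ prefactor and contour integration against $\varphi$, it produces a contribution to $T_N(\varphi)$ that is $o(1)$ in probability, uniformly on $\Gamma$. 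A secondary subtlety is the careful placement of the contour relative to the branch cut $[-1,-1/J]$ in the $s$-variable and the verification that the analyticity hypothesis on $\varphi$ indeed guarantees an admissible contour, which is exactly the parenthetical remark in the theorem statement.
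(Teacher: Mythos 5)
Your proposal is correct in outline but takes a genuinely different route from the paper. The paper does \emph{not} split $M$ into a centered Wigner matrix plus a rank-one spike; it re-runs the entire Bai--Yao programme directly on the non-centered matrix: the mean $b_N(z)=N[\E s_N(z)-s(z)]$ is computed by Schur-complement expansions in which the entry means enter through sums such as $\frac{J^2}{N^2}\sum_{p,q}R^{(i)}_{pq}$ (Section \ref{sec:mean}), the covariance comes from a martingale decomposition and the martingale CLT (Sections \ref{sec:covariance}--\ref{sec:miscellanies}), and the pieces of the contour near the real axis are controlled by Lemma \ref{lem:Gamolrsm}. Your decomposition $M=W+\frac{J}{N}\lone\lone^{\mathsf T}+\frac{J'-J}{N}I$ with Sherman--Morrison instead isolates the spike in the scalar $\frac1N\langle\lone,(W-z)^{-1}\lone\rangle$, imports the $J=0$ linear-statistics CLT as a black box, and only has to show that the correction to $N(s_N-s)$ is an $O(1)$ quantity with $O(N^{-1/2+\epsilon})$ fluctuations; since that correction converges in probability to a deterministic function, Slutsky's theorem makes your step (iii) automatic and no cross-covariance computation is needed. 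Your route is more modular and makes it transparent why the variance is $J$-independent; the paper's route is self-contained (no need to match the hypotheses of a quoted CLT to Definition \ref{def:M}) and delivers the process-level statement of Proposition \ref{prop:gaussian xi} together with the uniform bounds on $\Gamma_r\cup\Gamma_l\cup\Gamma_0$ in one pass.

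Two points in your sketch need repair before the mean matches \eqref{eq:Mvarfo}. First, the $\tau_1$ bookkeeping: the deterministic limit of the Sherman--Morrison correction to $T_N(\varphi)$ is $-\frac{1}{2\pi\ii}\oint\varphi\left(-s-\frac1s\right)\frac{J}{1+Js}\,\dd s$, which via $\frac{J}{1+Js}=J-\frac{J^2s}{1+Js}$ and deformation of the constant piece to $|s|=1$ equals $J\tau_1(\varphi)+\frac{1}{2\pi\ii}\oint\varphi\left(-s-\frac1s\right)\frac{J^2s}{1+Js}\,\dd s$; it is the sum of the extra $J\tau_1(\varphi)$ with the $(J'-J)\tau_1(\varphi)$ coming from the scalar shift $\frac{J'-J}{N}I$ that produces the $J'\tau_1(\varphi)$ in \eqref{eq:Mvarfo}, so neither contribution is "subsumed" on its own. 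Second, your closing claim that for $J\leq 1$ the added contour integral "can be taken to vanish" is false: by \eqref{eq:Mvarfo22} it equals $\sum_{\ell\geq 2}J^{\ell}\tau_{\ell}(\varphi)$ for $0<J<1$, which is generically nonzero; what is insensitive to $J\gtrless 1$ is the formula, not its value. Both issues are fixable and do not affect the viability of the approach.
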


Note that the variance does not depend on $J$ and $J'$ but the mean does.

Among various methods of studying the linear statistics in random matrix theory, we follow the method of  Bai and Silverstein, and Bai and Yao \cite{BS2004, BY2005} to prove the above result. 
Specifically, we extend the analysis of \cite{BY2005} to the $J>0$ case.
Let $\rho_N=\frac{1}{N} \sum_{j=1}^N \delta_{\mu_j}$ be the empirical spectral distribution of $M$.
As $N \to \infty$, $\rho_N$ converges to the semicircle measure $\rho$, defined by
\beq \label{semicirlw}
\rho(\dd x) = \frac{1}{2\pi} \sqrt{4-x^2}_+ \dd x.
\eeq
Let $s_N(z)$ and $s(z)$ be the Stieltjes transforms of $\rho_N$ and $\rho$, respectively, for $z \in \C^+$. Then, $T_N(\varphi)$ admits an integral representation, which can be easily converted to a contour integral that contains $\xi_N(z) := s_N(z)-s(z)$ in its integrand. The problem then reduces to showing that $\xi_N(z)$ converges to a Gaussian process $\xi(z)$. Due to the non-zero mean of the entries $M_{ij}$, the proof of convergence of $\xi_N(z)$ and the evaluation of the mean and the covariance of $\xi(z)$ become complicated. The main technical input we use in the estimate is the local semicircle law obtained in \cite{EKYY1}.

Theorem \ref{thm:main} (ii) follows from Theorem \ref{thm:main2} and Theorem \ref{thm:linear} once we evaluate the mean and the variance of the limiting Gaussian distribution: see Section \ref{sec:proofofthmma}. 

\begin{rem}
It is direct to check that 
the integral in \eqref{eq:Mvarfo} can also be expressed as: 
\beq \label{eq:Mvarfo22}
\begin{split}
	\frac{1}{2\pi \ii} \oint  \varphi \left(-s-\frac{1}{s} \right)  \frac{J^2 s}{1 + Js}  \dd s
	= \begin{cases}
	\sum_{\ell=2}^{\infty} J^{\ell} \tau_{\ell}(\varphi) & \text{ if } J<1,\\
	\frac{1}{2} \varphi(2) -\frac{1}{2} \tau_0(\varphi) - \tau_1(\varphi) & \text{ if } J=1, \\
	\varphi ( \widehat J ) - \tau_0(\varphi) - \widehat J \tau_1(\varphi) - \sum_{\ell=2}^{\infty} J^{-\ell} \tau_{\ell}(\varphi) & \text{ if } J>1.
	\end{cases}
\end{split} \eeq\end{rem}

\subsection{Transitions} \label{sec:transitions}

It is interesting to consider the phase transition and the near-critical behaviors in Theorem \ref{thm:main} and \ref{thm:main2}. 
We have the following result for the  transition between the spin glass regime (i) and the ferromagnetic regime (iii). 
For fixed $\beta > 1/2$, 
consider $J$ depending on $N$ as 
\beq \label{Jscaletr}
	J = 1+wN^{-1/3}. 
\eeq
Then for each $w\in \R$, the asymptotic result \eqref{eq:thmspin1low002} still holds. 
Now in the theory of spiked random matrices, the distribution of $\mu_1$ is known to have the transition under the scaling \eqref{Jscaletr}: 
\beq \label{Jscaletr2}
	N^{2/3} \left( \mu_1- 2 \right) \Rightarrow \TW_{1, w}
\eeq
where $\TW_{1,w}$ is a one-parameter family of random variables with the distribution function obtained in Theorems 1.5 and 1.7 of \cite{Bloemendal-Virag1}. See also \cite{Mo} for the Gaussian case and \cite{Feral-Peche07} for a more general class of Wigner matrices.
See Section \ref{sub:transition23}.

For other transitions, by matching the fluctuation scales, we expect that the critical window for the transition between the paramagnetic regime (ii) and the ferromagnetic regime (iii) is $J = \frac{1}{2\beta} + O(N^{-1/2})$ for each $\beta<1$ and that of the transition between the spin glass regime (i) and the paramagnetic regime (ii) is $\beta = \frac{1}{2} + O(\frac{\sqrt{\log N}}{N^{1/3}})$ for each $J<1$. 
However, the analysis of these transition regimes is yet to be done.

\subsection{Organization}

The rest of paper is organized as follows. 
In Section \ref{sec:proofofthmma} and Section \ref{sec:proofmain}, we prove the main results, Theorem \ref{thm:main} and Theorem \ref{thm:main2}, respectively. 
Theorem \ref{thm:linear} (linear statistics) is proved in Section \ref{sec:lspr} assuming Proposition \ref{prop:gaussian xi} and Lemma \ref{lem:Gamolrsm}. 
Proposition \ref{prop:gaussian xi} is proved in Section \ref{sec:outline}--\ref{sec:miscellanies}.
Lemma \ref{lem:Gamolrsm} is proved in Section \ref{sub:nonrandom}.
Certain technical large deviation estimates are proved in Section \ref{subsec:largdeves}.

\begin{nrem}
Throughout the paper we use $C$ or $c$ in order to denote a constant that is independent of $N$. 
Even if the constant is different from one place to another, we may use the same notation $C$ or $c$ as long as it does not depend on $N$ for the convenience of the presentation.
\end{nrem}

\begin{nrem}
The notation $\Rightarrow$ denotes the convergence in distribution as $N\to \infty$. 
\end{nrem}

\begin{nrem}
For random variables $X$ and $Y$ depending on $N$, we use the notation $X \prec Y$ to mean that 
$$
\p(|X| > N^{\epsilon} |Y|) < N^{-D}
$$
for any (small) $\epsilon > 0$ and (large) $D > 0$. The relation $\prec$ is transitive and satisfies the arithmetic rules, e.g., if $X_1 \prec Y_1$ and $X_2 \prec Y_2$ then $X_1 + X_2 \prec Y_1 + Y_2$ and $X_1 X_2 \prec Y_1 Y_2$. We will also use the notation $X = \caO(N^p)$ if $X \prec N^p$ for a constant $p$.
\end{nrem}

\subsubsection*{Acknowledgments}
We would like to thank Zhidong Bai, Zhigang Bao, Wei-Kuo Chen, Jack Silverstein, and Jianfeng Yao for several useful communications. 
The work of Jinho Baik was supported in part by NSF grants DMS1361782. The work of Ji Oon Lee was supported in part by Samsung Science and Technology Foundation project number SSTF-BA1402-04.

\section{Proof of Theorem \ref{thm:main}} \label{sec:proofofthmma}

We already discussed in Section \ref{sec:resultsm} how Theorem \ref{thm:main} (i), (iii) follow from Theorem \ref{thm:main2} and \eqref{spikedr1fl}.
We now check that Theorem \ref{thm:main} (iii) follows from Theorem \ref{thm:main2} and Theorem \ref{thm:linear}. 

In Theorem \ref{thm:linear}, we use the function $\varphi(x)=g(x)= \log\left( 2\beta+ \frac1{2\beta} -x\right)$. 
We first evaluate $M(\varphi)$ and $V(\varphi)$ in Theorem \ref{thm:linear} for this function.

The variance $V(\varphi)$ does not depend on $J$ and $J'$, and hence it is the same as the $J=J'=0$ case. 
The value  $\sigma^2 = \frac14 V(\varphi)$ was evaluated (3.13) of \cite{BaikLee} (see the second last sentence in Section 5 of \cite{BaikLee}); this is equal to $\alpha_1$ in \eqref{eq:alpha1}.

Now consider $M(\varphi)$. 
For the function $\varphi=g$, it was shown in (A.17) of \cite{BaikLee} that 
\beq
	\tau_0(\varphi)=-\log(2\beta), \quad \tau_1(\varphi)=-2\beta, \quad \tau_2(\varphi)=-2\beta^2, \quad \tau_4(\varphi)= -4\beta^4.
\eeq
We now evaluate 
\beq \label{eq:Mvarfospecf}
\begin{split}
	\frac{1}{2\pi \ii} \oint  \varphi \left(-s-\frac{1}{s} \right)  \frac{J^2 s}{1 + Js}  \dd s
	=  \frac{1}{2\pi \ii} \oint  \log \left(2\beta+ \frac1{2\beta}+s+\frac{1}{s} \right)  \frac{J^2 s}{1 + Js}  \dd s.
\end{split} \eeq
Set $B=2\beta$. Then $B<\min\{1, 1/J\}$ since we are in the paramagnetic regime. 
The above integral is 
\beq
	F(B)= \frac{1}{2\pi \ii} \oint_{|s|=r} \log \left(B+\frac1{B}+s+\frac{1}{s} \right)  \frac{J^2 s}{1 + Js}  \dd s
\eeq	
where we can take $r$ to be any number satisfying $B<r<\min\{1, 1/J\}$. 
Its derivative is 
\beq \label{F derivative}
	F'(B)= \frac{1}{2\pi \ii} \oint_{|s|=r} \frac{(B^2-1)J^2 s^2}{B(B+s)(1+Bs)(1 + Js)}  \dd s
	=- \frac{J^2B}{1-JB} = J - \frac{J}{1-JB}
\eeq
by the calculus of residue: the one pole inside the contour is $s=-B$.
Hence $F(B)=JB+ \log(1-JB)+C$ for a constant $C$ for every $B$ satisfying $0<B<\min\{1, 1/J\}$. To find the constant $C$, 
note that 
\beq
	F(B)= \frac{1}{2\pi \ii} \oint_{|s|=r} \log \left( \frac{(B+s)(1+Bs)}{Bs} \right)  \frac{J^2 s}{1 + Js}  \dd s
	= \frac{1}{2\pi \ii} \oint_{|s|=r} \log \left( \frac{(B+s)(1+Bs)}{s} \right)  \frac{J^2 s}{1 + Js}  \dd s
\eeq	
since the integral of $\frac{J^2 s}{1 + Js}$ over the circle $|s|=r$ is zero. Hence $F(B)\to 0$ as $B\to 0$. This implies that $C=0$ and therefore $F(B)= JB+ \log(1-JB)$. This implies that 
\beq
	\frac{1}{2\pi \ii} \oint  \varphi \left(-s-\frac{1}{s} \right)  \frac{J^2 s}{1 + Js}  \dd s
	=2\beta J + \log(1-2\beta J). 
\eeq	
Therefore,
\beq
	M(\varphi)= 
	 \frac{1}{2} \log\left( 1-4\beta^2 \right) -2\beta J'  
	- 2\beta^2 (w_2-2) - 4\beta^4 \left( W_4 - 3 \right) + 2\beta J + \log(1-2\beta J). 
\eeq

We also have (see (A.5) of \cite{BaikLee})
\beq
	\int_{-2}^2 \log\left( 2\beta+\frac1{2\beta}-x \right) \rho(\dd x) = 2\beta^2-\log\left(2\beta \right).
\eeq
Furthermore, applying Theorem \ref{thm:linear} to function $-g''(x)$, we have 
\beq
	- \frac1{N} \sum_i g''(\mu_i) \to \int_{-2}^2 \frac1{(2\beta+\frac1{2\beta}-x)^2} \rho(\dd x)	
	= \frac{4\beta^2}{1-4\beta^2}.
\eeq
in probability (see (A.8) of \cite{BaikLee} for the equality). 
Therefore, Theorem \ref{thm:main2} (ii) implies that 
\beq
	N\left( F_N-\beta^2 \right) \Rightarrow  \mathcal{N} \left(f_1, \alpha_1 \right)
\eeq
where
\beq
	f_1= -\frac12 M(\varphi)+ \log(2\beta) - \frac12 \log \left( \frac{4\beta^2}{1-4\beta^2} \right), 
	\qquad
	\alpha_1= \frac14 V(\varphi). 
\eeq
These are same as \eqref{eq:f1} and \eqref{eq:alpha1}. The proof is complete.

\section{Proof of Theorem \ref{thm:main2}} \label{sec:proofmain}

As we mentioned before, the leading order limit of the free energy \eqref{Flimit} was obtained in \cite{KosterlitzThoulessJones}. 
This is based on the following integral representation for the quenched case, i.e. for fixed matrix $M$. 

\begin{lem}[\cite{KosterlitzThoulessJones}; also Lemma 1.3 of \cite{BaikLee}] \label{lem:inverse laplace}
Let $M$ be an $N \times N$ symmetric matrix with eigenvalues $\mu_1 \geq \mu_2 \geq \dots \geq \mu_N$. 
Then  
\beq \label{integral representation0}
	\int_{S_{N-1}} e^{\beta \langle \bss, M \bss \rangle }\dd \omega_N(\bss) 
	= C_N  \int_{\gamma - \ii \infty}^{\gamma + \ii \infty} e^{\frac{N}{2} G(z)} \dd z, 
	\quad 
	G(z) = 2\beta z - \frac{1}{N} \sum_i \log (z - \mu_i),
\eeq
where $\gamma$ is any constant satisfying $\gamma>\mu_1$, the integration contour is the vertical line from $\gamma-\ii \infty$ to $\gamma+\ii \infty$, the $\log$ function is defined in the principal branch, and 
\beq
	C_N	= \frac{\Gamma(N/2)}{2\pi \ii (N\beta)^{N/2-1}}.
\eeq 
Here $\Gamma(z)$ denotes the Gamma function. 
\end{lem}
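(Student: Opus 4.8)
The plan is to reduce the spherical integral to an explicit Gaussian integral on $\R^N$ and then perform a Laplace (Bromwich) inversion. Since the normalized uniform measure $\omega_N$ is invariant under the orthogonal group, I may first rotate coordinates and assume without loss of generality that $M=\diag(\mu_1,\dots,\mu_N)$, so that $\langle\bss,M\bss\rangle=\sum_i\mu_i\sigma_i^2$; neither side of \eqref{integral representation0} is affected by this reduction.

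For $t>0$ write $\Phi(t):=\int_{\{\|\bsx\|^2=t\}}e^{\beta\langle\bsx,M\bsx\rangle}\,\dd S(\bsx)$ for the integral against the (unnormalized) surface measure on the sphere of radius $\sqrt t$ in $\R^N$, so that the left side of \eqref{integral representation0} equals $\Phi(N)/\sigma_N$ with $\sigma_N=N^{(N-1)/2}\cdot 2\pi^{N/2}/\Gamma(N/2)$ the total surface area. Applying the coarea formula to $\bsx\mapsto\|\bsx\|^2$ (whose gradient has length $2\|\bsx\|$), one gets, for real $z>2\beta\mu_1$,
\[
\int_{\R^N}e^{\beta\langle\bsx,M\bsx\rangle}e^{-\frac z2\|\bsx\|^2}\,\dd\bsx=\int_0^\infty\frac{\Phi(t)}{2\sqrt t}\,e^{-\frac z2 t}\,\dd t .
\]
The left side is the absolutely convergent Gaussian integral $(2\pi)^{N/2}\det(zI-2\beta M)^{-1/2}=\pi^{N/2}\prod_i(\tfrac z2-\beta\mu_i)^{-1/2}$ with the principal square roots (all factors lie in the right half-plane). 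Hence the Laplace transform $L(w):=\int_0^\infty\frac{\Phi(t)}{2\sqrt t}e^{-wt}\,\dd t$ equals $\pi^{N/2}\prod_i(w-\beta\mu_i)^{-1/2}$ for $\re w>\beta\mu_1$, and extends analytically to $\C\setminus(-\infty,\beta\mu_1]$.

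I then invert. Since $\Phi$ is continuous (in fact real-analytic) and $|L(\delta+\ii\tau)|\lesssim(1+|\tau|)^{-N/2}$ uniformly for $\delta$ in compacts of $(\beta\mu_1,\infty)$, the Bromwich inversion formula gives $\frac{\Phi(t)}{2\sqrt t}=\frac1{2\pi\ii}\int_{\delta-\ii\infty}^{\delta+\ii\infty}L(w)e^{wt}\,\dd w$ for every $\delta>\beta\mu_1$. Taking $t=N$, substituting $w=\beta z$ (so the contour becomes $\re z=\gamma:=\delta/\beta>\mu_1$, with orientation preserved since $\beta>0$), and using $e^{\beta z N}\prod_i(z-\mu_i)^{-1/2}=\exp(\tfrac N2 G(z))$ with the principal branch of $\log(z-\mu_i)$ (legitimate because $\re(z-\mu_i)=\gamma-\mu_i>0$ on the contour) yields $\Phi(N)=\bigl(2\sqrt N\,\pi^{N/2}\beta^{1-N/2}/(2\pi\ii)\bigr)\int_{\gamma-\ii\infty}^{\gamma+\ii\infty}e^{\frac N2 G(z)}\,\dd z$. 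Dividing by $\sigma_N$ and simplifying with $\sqrt N/N^{(N-1)/2}=N^{1-N/2}$ gives precisely $C_N=\Gamma(N/2)/\bigl(2\pi\ii\,(N\beta)^{N/2-1}\bigr)$; independence of the right side from the choice of $\gamma>\mu_1$ is immediate from Cauchy's theorem and the same vertical decay.

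The step that needs genuine care — and the one I would regard as the main obstacle — is the rigorous Bromwich inversion, i.e.\ checking that $\frac1{2\pi\ii}\int_{\delta-\ii\infty}^{\delta+\ii\infty}L(w)e^{wt}\,\dd w$ indeed reproduces $\Phi(t)/(2\sqrt t)$ pointwise. This rests on the elementary bound $|L(\delta+\ii\tau)|=\pi^{N/2}\prod_i|\delta+\ii\tau-\beta\mu_i|^{-1/2}\lesssim(1+|\tau|)^{-N/2}$, which makes the Bromwich integral absolutely convergent for $N\ge 3$ (for $N=1,2$ it is only conditionally convergent, a routine modification, and these cases are in any event not needed for the asymptotic applications), together with the local regularity of $\Phi$. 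Once this is in hand, the remainder is bookkeeping of the Gaussian normalization and of the surface area $|S^{N-1}|=2\pi^{N/2}/\Gamma(N/2)$.
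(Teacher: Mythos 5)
Your proof is correct, and it follows essentially the same route as the cited source: the paper does not reprove this lemma but refers to Lemma 1.3 of \cite{BaikLee}, whose argument is exactly this identification of the Gaussian integral $\int_{\R^N}e^{\beta\langle\bsx,M\bsx\rangle-\frac z2\|\bsx\|^2}\dd\bsx$ with the Laplace transform of the radial slices, followed by Bromwich inversion at $t=N$. Your bookkeeping of the constants ($\pi^{N/2}\beta^{1-N/2}$, the surface area $N^{(N-1)/2}2\pi^{N/2}/\Gamma(N/2)$, and the substitution $w=\beta z$) correctly reproduces $C_N$, and your remark about conditional convergence for small $N$ is an appropriate caveat.
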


Now for the spin system, the eigenvalues $\mu_i$ are random, but using random matrix theory, 
there are precise estimates on these random variables, and we can still apply the method of steepest-descent. 
A formal application of the method of steepest-descent was done in \cite{KosterlitzThoulessJones} and obtained the leading order term. 
In \cite{BaikLee}, we supply necessary estimates and made the result of \cite{KosterlitzThoulessJones} rigorous when $J=0$. 
We furthermore,  extended the analysis to the next order term and obtained limit theorems, Theorem \ref{thm:main} when $J=0$.
It is not explicitly stated in \cite{BaikLee}, but the analysis in it proved Theorem \ref{thm:main2} for $J=0$ as well.
We now follow the similar approach and prove Theorem \ref{thm:main2} for $J>0$.

\subsection{Rigidity estimates of the eigenvalues}

Let $M$ be a Wigner matrix $M$ with non-zero mean in Definition \ref{def:M}. By definition, 
\begin{enumerate}[(a)]
\item For $i \neq j$, $\E M_{ij} = J N^{-1}$, $\E |M_{ij}|^2 = N^{-1} + J^2 N^{-2}$, $\E |A_{ij}|^4 = W_4 N^{-2} + O(N^{-\frac{3}{2}})$. In addition, for Hermitian case, $\E M_{ij}^2 = J^2 N^{-2}$.
\item For $i=j$, $\E M_{ii} = J' N^{-1}$, $\E |M_{ii}|^2 = W_2 N^{-1} + (J'N^{-1})^2$.
\end{enumerate}

For $M$, we have the following precise rigidity estimate for all eigenvalues other than the largest one. 

\begin{lem}[Theorem 2.13 of \cite{EKYY1}, rigidity] \label{lem:rigidity}
For a positive integer $k \in [1, N]$, let $\hat k := \min \{ k, N+1-k \}$. Let $\gamma_k$ be the classical location defined by
\beq\label{eq:classicallocationdef}
\int_{\gamma_k}^{\infty} \dd \rho_{sc} = \frac{1}{N} \left( k - \frac{1}{2} \right).
\eeq
Then,
\beq \label{eq:rigidity}
|\mu_k - \gamma_k| \prec \hat k^{-1/3} N^{-2/3}
\eeq
for all $k=2,3,\dots, N$. 
\end{lem}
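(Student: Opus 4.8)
The plan is to deduce this from the standard rigidity estimate for \emph{mean-zero} Wigner matrices together with eigenvalue interlacing. Write
\[
	M = H + \frac{J}{N}\,\bse\bse^{\mathsf T} + \frac{J'-J}{N}\,I ,
\]
where $\bse=(1,\dots,1)^{\mathsf T}\in\R^N$ and $H=(A_{ij}/\sqrt N)_{i,j=1}^N$ is an ordinary mean-zero Wigner matrix; the moment assumptions on the $A_{ij}$ in Definition \ref{def:M} (including $\E[A_{ii}^2]=w_2\ge 0$) are precisely those under which the local semicircle law and rigidity hold. The middle term is a positive semidefinite rank-one matrix with spectrum $\{J,0,\dots,0\}$, and the last term has operator norm $|J'-J|/N = O(N^{-1})$.

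First I would invoke the known rigidity estimate for $H$ --- this is genuinely Theorem 2.13 of \cite{EKYY1} (see also \cite{EYY}): writing $\mu_1(H)\ge\dots\ge\mu_N(H)$ for its eigenvalues, one has $|\mu_k(H)-\gamma_k|\prec \hat k^{-1/3}N^{-2/3}$ for every $k=1,\dots,N$. Next, by Weyl's inequality the diagonal shift changes every eigenvalue by at most $|J'-J|/N=O(N^{-1})$, so it suffices to prove the bound for $M' := H + \frac{J}{N}\bse\bse^{\mathsf T}$. The key step is then Cauchy (rank-one Weyl) interlacing: since $\frac{J}{N}\bse\bse^{\mathsf T}\succeq 0$ has rank one,
\[
	\mu_k(H)\ \le\ \mu_k(M')\ \le\ \mu_{k-1}(H),\qquad k=2,3,\dots,N .
\]
Observe that for $k=1$ there is no upper interlacing bound from $H$; this is exactly why $\mu_1$ is excluded from the statement, and it reflects the outlier phenomenon --- for $J>1$ the top eigenvalue $\mu_1$ leaves the bulk and sits near $J+J^{-1}>2$, so no bulk rigidity can hold for it.

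To conclude, I would combine the displayed two-sided bound with the rigidity of $H$ and with a uniform estimate on the spacing of the classical locations, namely $|\gamma_{k-1}-\gamma_k|\lesssim \hat k^{-1/3}N^{-2/3}$ for $2\le k\le N$. The latter follows from the definition $\int_{\gamma_k}^\infty \dd\rho_{sc} = \frac1N\!\left(k-\frac12\right)$ together with the square-root vanishing of the semicircle density at the spectral edges: at $\gamma_k$ the density is of order $(\hat k/N)^{1/3}$, so an interval carrying $\rho_{sc}$-mass $1/N$ has length of order $N^{-2/3}\hat k^{-1/3}$ near the edge and $O(N^{-1})$ in the bulk, which is in any case $\le \hat k^{-1/3}N^{-2/3}$. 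Since $\widehat{k-1}$ and $\hat k$ differ by at most a bounded factor, this yields $|\mu_k(M)-\gamma_k|\prec \hat k^{-1/3}N^{-2/3}$ for all $k=2,\dots,N$.

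None of the steps is deep: essentially all of the content is imported from the mean-zero rigidity of \cite{EKYY1}, and the rest is bookkeeping. The only points needing care are getting the direction of the rank-one interlacing right (which forces the restriction to $k\ge 2$) and making the classical-location spacing bound uniform in $k$ across the crossover from the edge regime $\hat k = O(1)$ to the bulk regime $\hat k\sim N$. An alternative would be to run a local semicircle law directly for the deformed matrix $M$, but the interlacing reduction is shorter and entirely sufficient here.
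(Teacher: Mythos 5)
Your argument is correct, but it is worth pointing out that the paper does not actually prove this lemma at all: it is quoted verbatim as Theorem~2.13 of \cite{EKYY1}, which is already formulated for matrices whose off-diagonal entries share a common non-zero mean (the Erd\H{o}s--R\'enyi adjacency matrix is exactly of the form $H+\frac{f}{N}\bse\bse^{\mathsf T}$ after rescaling), and which already excludes the largest eigenvalue for the same outlier reason you identify. Your route is genuinely different: you reduce to the \emph{mean-zero} rigidity statement via the decomposition $M=H+\frac{J}{N}\bse\bse^{\mathsf T}+\frac{J'-J}{N}I$, a Weyl bound of $O(N^{-1})$ for the diagonal shift (which is indeed $\le \hat k^{-1/3}N^{-2/3}$ for all $k$), rank-one Cauchy interlacing $\mu_k(H)\le\mu_k(M')\le\mu_{k-1}(H)$ for $k\ge 2$ (valid since $J\ge 0$ makes the perturbation positive semidefinite), and the classical-location spacing bound $|\gamma_{k-1}-\gamma_k|\lesssim \hat k^{-1/3}N^{-2/3}$, together with $\widehat{k-1}\ge \hat k/2$. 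All of these steps check out, and your derivation has the advantage of being self-contained modulo the standard mean-zero rigidity and of making transparent why $k=1$ must be excluded. What the paper's citation buys instead is that no reduction is needed at all. Two small caveats apply to both routes and deserve a sentence if you write this up: the diagonal variance $w_2$ may be $0$ or otherwise fail the uniform lower bound in the ``generalized Wigner'' normalization of \cite{EKYY1,EYY}, so one must invoke the standard extension covering mismatched diagonal variance (the paper points to Lemma~3.5 of \cite{LY} for exactly this issue elsewhere); and the moment hypotheses of Definition~\ref{def:M} (all moments finite) must be matched to the subexponential-decay or finite-moment form of the rigidity theorem being imported.
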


The largest eigenvalue $\mu_1$ depends on $J$ and we have the following Dichotomy:

\begin{lem}[Theorem 6.3 of \cite{KY2013_iso}] \label{lem:largest eigenvalue}
\mbox{ }
\begin{enumerate}[(a)]
\item If $J \leq 1$,
\beq \label{sp1sub}
	|\mu_1 - 2| \prec N^{-2/3}
\eeq

\item If $J > 1$, 
\beq
	\left| \mu_1 - (J + \frac{1}{J}) \right| \prec \sqrt{\frac{J-1 + N^{-1/3}}{N}}. 
\eeq
\end{enumerate}
\end{lem}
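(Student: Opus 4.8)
The result is quoted from \cite{KY2013_iso}; we sketch a proof for completeness, as it is the standard outlier analysis for a rank-one spiked Wigner matrix. Writing $W_{ij}=A_{ij}/\sqrt N$ for all $i,j$, one has $M = W + J\,\bsv\bsv^{\mathsf T} + \tfrac{J'-J}{N}I$, where $\bsv=N^{-1/2}(1,\dots,1)^{\mathsf T}$ is a deterministic unit vector and $W$ is a mean-zero Wigner matrix. The scalar term $\tfrac{J'-J}{N}I$ translates every eigenvalue by $O(N^{-1})$, which is negligible against both scales in the statement, so it suffices to locate the top eigenvalue $\mu_1$ of $\widetilde M := W+J\bsv\bsv^{\mathsf T}$. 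The plan has three steps: \emph{(i)} reduce the eigenvalue problem above $\mu_1(W)$ to a scalar equation in the resolvent bilinear form $b(\mu):=\langle\bsv,(W-\mu)^{-1}\bsv\rangle$; \emph{(ii)} replace $b(\mu)$ by the Stieltjes transform $s(\mu)=\int(x-\mu)^{-1}\rho(\dd x)$ of the semicircle law using the isotropic local law; \emph{(iii)} analyse the deterministic equation $1+Js(\mu)=0$.

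For \emph{(i)}: for $\mu>\mu_1(W)$ the matrix determinant lemma gives $\det(\widetilde M-\mu)=\det(W-\mu)\,h(\mu)$ with $h(\mu):=1+J\,b(\mu)$. Since $b(\mu)=\sum_k|\langle\bsv,\bsu_k\rangle|^2(\lambda_k(W)-\mu)^{-1}$ in terms of the eigenpairs $(\lambda_k(W),\bsu_k)$ of $W$, the function $h$ is strictly increasing on $(\mu_1(W),\infty)$ with $h'(\mu)=J\langle\bsv,(W-\mu)^{-2}\bsv\rangle>0$, tends to $1$ at $+\infty$, and tends to $-\infty$ as $\mu\downarrow\mu_1(W)$ on the event (of probability $\ge 1-N^{-D}$) that $\langle\bsv,\bsu_1\rangle\neq 0$. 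By eigenvalue interlacing $\mu_2(\widetilde M)\le\mu_1(W)<\mu_1(\widetilde M)$, so $\mu_1(\widetilde M)$ is precisely the unique zero of $h$ in $(\mu_1(W),\infty)$; equivalently, for $a>\mu_1(W)$ one has $\mu_1(\widetilde M)\le a\iff h(a)\ge 0$.

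For \emph{(ii)}--\emph{(iii)}: the isotropic local semicircle law of \cite{KY2013_iso} (see also \cite{EKYY1}) gives, for the deterministic unit vector $\bsv$ and uniformly for $z=E+\ii\eta$ with $E$ near $2$, that $b(z)=s(z)+\caE(z)$ with $\caE(z)\prec\sqrt{\im s(z)/(N\eta)}+(N\eta)^{-1}$; we also use edge rigidity $|\mu_1(W)-2|\prec N^{-2/3}$ for the mean-zero matrix $W$. Deterministically, $1+Js(\mu)=0$ forces $s(\mu)=-1/J$, and the identity $s(\mu)+s(\mu)^{-1}=-\mu$ then gives $\mu_*=J+J^{-1}$, with $\sqrt{\mu_*^2-4}=J-J^{-1}$ and $s'(\mu_*)=(J^2-1)^{-1}$. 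If $J>1$, then $\mu_*>2$; writing $h(\mu_1)=0$ as $s(\mu_1)-s(\mu_*)=-\caE(\mu_1)$ and linearising, $|\mu_1-\mu_*|\approx|\caE(\mu_*)|/s'(\mu_*)$, where $b(\mu_*)$ fluctuates on scale $(s'(\mu_*)/N)^{1/2}$ (its variance being $\asymp N^{-1}\langle\bsv,(W-\mu_*)^{-2}\bsv\rangle\asymp s'(\mu_*)/N$, and the local law provides this as an upper bound). This yields $|\mu_1-\mu_*|\prec(Ns'(\mu_*))^{-1/2}=\sqrt{(J^2-1)/N}$; once $J-1\lesssim N^{-1/3}$, so that the outlier gap $\mu_*-2\asymp(J-1)^2$ is no larger than the rigidity scale $N^{-2/3}$, one falls back on the edge estimate, which is what the $N^{-1/3}$ term in the statement records, giving the claimed $\sqrt{(J-1+N^{-1/3})/N}$. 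If $J\le 1$, a short computation with the local law shows that for $\kappa=N^{-2/3+\epsilon}$ one has $h(2+\kappa)=1+Js(2+\kappa)+J\caE(2+\kappa)>0$: indeed $1+Js(2+\kappa)=(1-J)+J\sqrt\kappa+O(\kappa)\gtrsim J\sqrt\kappa\asymp N^{-1/3+\epsilon/2}$, which dominates the local-law error there; hence by \emph{(i)} the zero $\mu_1(\widetilde M)$ lies in $(\mu_1(W),2+N^{-2/3+\epsilon})$, and since also $\mu_1(\widetilde M)\ge\mu_1(W)\ge 2-N^{-2/3+\epsilon}$ we conclude $|\mu_1-2|\prec N^{-2/3}$.

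The main obstacle is the input of step \emph{(ii)}: controlling the bilinear resolvent form $b(z)$ uniformly down to the optimal scale $\eta\sim N^{-2/3+\epsilon}$ near the spectral edge, and at real $z$ just outside the spectrum — this is exactly the isotropic local law of \cite{KY2013_iso}, and it is the technically substantial ingredient. After that, the scalar perturbation analysis above and, in the supercritical case, the bookkeeping that matches the gap $(J-1)^2$ against the rigidity scale $N^{-2/3}$ to produce the $N^{-1/3}$ correction, are routine. Accordingly, in a self-contained write-up one would import the isotropic law and rigidity of \cite{KY2013_iso, EKYY1} as black boxes and carry out only the argument sketched here; the finer moment data $(W_3,W_4,w_2,J')$ entering Definition~\ref{def:M} does not affect $\mu_1$ at this precision.
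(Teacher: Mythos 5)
The paper does not prove this lemma; it imports it verbatim as Theorem 6.3 of \cite{KY2013_iso}, and your sketch correctly reproduces the standard argument behind that theorem (rank-one reduction via the determinant lemma to $1+J\langle\bsv,(W-\mu)^{-1}\bsv\rangle=0$, the isotropic local law and rigidity as inputs, and the matching of the outlier gap $(J-1)^2$ against the $N^{-2/3}$ edge scale). Your bookkeeping of the error sizes and of $s'(\mu_*)=(J^2-1)^{-1}$ checks out, so this is correct and follows essentially the same route as the cited source.
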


\subsection{Proof}

We apply the method of steepest-descent to the integral in Lemma \ref{lem:inverse laplace}. 
It is easy to check that $G'(z)$ is an increasing function of $z$ on $(\mu_1, \infty)$, hence 
there exists a unique $\gamma \in (\mu_1, \infty)$ satisfying the equation $G'(\gamma) = 0$: see Lemma 4.1 of \cite{BaikLee}. 
We see in the analysis below that in the spin glass regime and the ferromagnetic regime, $\gamma$ is close to $\mu_1$ with distance of order $O(N^{\epsilon-1})$. 
On the other hand, for the paramagnetic regime, $\gamma$ is away from $\mu_1$ with distance of order $O(1)$. 

\subsubsection{Spin glass regime: $\beta > \frac{1}{2}$ and $J < 1$} \label{sec:spinglass}

\begin{proof}[Proof of Theorem \ref{thm:main2} (i)]
In Theorem 2.11 of \cite{BaikLee}, we obtained a Tracy-Widom limit theorem, Theorem \ref{thm:main2} (i), for general symmetric random matrix $M$ without assuming that the mean is zero. 
This theorem assumes three conditions, Condition 2.3 (Regularity of measure), Condition 2.4 (Rigidity of eigenvalues), and Condition 2.6 (Tracy-Widom limit of the largest eigenvalue).
The proof actually establishes Theorem \ref{thm:main2} (i) under Condition 2.3 and Condition 2.4 first, which then implies Theorem \ref{thm:main} (i) if we add Condition 2.6: See (6.3) in \cite{BaikLee} and then the sentence below it. 
Now for Wigner matrix with non-zero mean $M$, Condition 2.3 and Condition 2.4 are satisfied clearly from Lemma \ref{lem:rigidity} and Lemma \ref{lem:largest eigenvalue}, including the largest eigenvalue. 
Hence Theorem \ref{thm:main2} (i) is proved. 
\end{proof}

\subsubsection{Paramagnetic regime:  $\beta < \frac{1}{2}$ and $\beta < \frac{1}{2J}$}

\begin{proof}[Proof of Theorem \ref{thm:main2} (ii)]
In Theorem 2.10 of \cite{BaikLee}, we proved a Gaussian limit theorem, Theorem \ref{thm:main} (ii), for general symmetric random matrix $M$ without assuming that the mean is zero. 
This theorem assumes three conditions, Condition 2.3 (Regularity of measure), Condition 2.4 (Rigidity of eigenvalues), and Condition 2.5 (Linear statistics of the eigenvalues). 
Similar to the spin glass regime, the proof actually establishes Theorem \ref{thm:main2} (ii) under Condition 2.3 and Condition 2.4 first, which then implies Theorem \ref{thm:main} (ii) if we add Condition 2.5: See (5.27) and (5.29) in \cite{BaikLee}.  
Now for Wigner matrix with non-zero mean $M$, 
Condition 2.4 is not satisfied when $J>1$ due to Lemma \ref{lem:largest eigenvalue}. 
However, we can easily modify the proof of Theorem 2.10 of \cite{BaikLee} for the paramagnetic conditions as we see now. 
The case $J \leq 1$ follows from Theorem 2.10 of \cite{BaikLee} directly, but we consider this case as well here. 

We choose $\gamma$ in Lemma \ref{lem:inverse laplace} as the unique critical value of $G(z)$ on the part of the real line $z \in (\mu_1, \infty)$. In order to evaluate the integral in \eqref{integral representation0}, we introduce a deterministic function
\beq \label{deterministic G}
\widehat G(z) = 2\beta z - \int_{-2}^2 \log(z-x) \dd \rho(x)
\eeq
where $\rho$ is the semicircle measure. Let $\widehat \gamma$ be the critical point of $\widehat G$ in the interval $(2, \infty)$. As in (A.4) of \cite{BaikLee}, it can be easily checked that
\beq \label{deterministic gamma}
	\widehat \gamma = 2\beta + \frac{1}{2\beta}.
\eeq
Recall the definition of $\widehat J$ in \eqref{hat J}. 
Since $\beta < 1/2$ and  $\beta < \frac{1}{2J}$ in the paramagnetic regime, we find that
\beq
	\widehat \gamma > \widehat J,
\eeq
hence $\widehat \gamma > \mu_1$ with high probability.

Recall that $\gamma_1$ is the classical location of the largest eigenvalue as defined in \eqref{eq:classicallocationdef}. Since $|\mu_1 - \gamma_1| = O(1)$ with high probability, Lemma 5.1 and Corollary 5.2 of \cite{BaikLee} hold for this case as well. Then, Corollary 5.3 and Lemma 5.4 of \cite{BaikLee} also hold, which implies the calculations up to (5.27) and (5.29) of \cite{BaikLee}. 
This proves Theorem \ref{thm:main2} (ii). 
\end{proof}

\subsubsection{Ferromagnetic regime: $J > 1$ and $\beta > \frac{1}{2J}$}

In this case, $\widehat \gamma$ in \eqref{deterministic gamma} satisfies $\widehat \gamma < \widehat J$ since $\beta > \frac{1}{2J}$, and hence the proof for the paramagnetic regime does not apply. 
Instead, this case is similar to the spin glass regime and we modify the proof of Theorem 2.11 of \cite{BaikLee}. 
The following lemma shows that $\gamma$ is close to $\mu_1$ up to order $1/N$. This is similar to Lemma 6.1 of \cite{BaikLee}.  

\begin{lem} \label{lem:case2 gamma}
Let $c>0$ be a constant such that $2\beta - \frac{1}{J} > c$ and $J - 1 > c$. Then,
\beq
\frac{1}{3\beta N} \leq \gamma - \mu_1 \leq \frac{2}{cN}.
\eeq
with high probability.
\end{lem}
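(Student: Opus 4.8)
\textbf{Proof proposal for Lemma \ref{lem:case2 gamma}.}

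The plan is to analyze the critical point equation $G'(\gamma)=0$ directly, exploiting that $G'$ is strictly increasing on $(\mu_1,\infty)$ (Lemma 4.1 of \cite{BaikLee}), so it suffices to produce one point below $\gamma$ where $G'<0$ and one point above where $G'>0$. Recall
\[
	G'(z) = 2\beta - \frac{1}{N}\sum_{i=1}^N \frac{1}{z-\mu_i}.
\]
The dominant term for $z$ near $\mu_1$ is the $i=1$ contribution $-\frac{1}{N(z-\mu_1)}$, which blows up; the remaining sum $\frac{1}{N}\sum_{i\ge 2}\frac{1}{z-\mu_i}$ is, by the ferromagnetic hypothesis $\widehat\gamma<\widehat J\le\mu_1$ (up to high-probability error from Lemma \ref{lem:largest eigenvalue}) and the rigidity estimate Lemma \ref{lem:rigidity}, essentially $\int_{-2}^2 \frac{\dd\rho(x)}{\mu_1-x}$ plus lower-order corrections. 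Since $\mu_1$ is separated from $[-2,2]$ by a distance bounded below by a constant (because $J-1>c$ forces $\widehat J = J+J^{-1} \ge 2 + c'$ for some $c'>0$), this integral is a bounded quantity, strictly less than $2\beta$ precisely when $\widehat\gamma<\widehat J$, i.e. $2\beta > \frac{1}{\mu_1-2}$-type inequality — more precisely one checks $\frac{1}{N}\sum_{i\ge 2}\frac{1}{\mu_1-\mu_i} \le 2\beta - c''$ with high probability for a constant $c''>0$ depending on $c$.

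First I would establish the upper bound $\gamma-\mu_1 \le \frac{2}{cN}$. Set $z = \mu_1 + \frac{2}{cN}$. Then the $i=1$ term is $-\frac{1}{N}\cdot\frac{cN}{2} = -\frac{c}{2}$, while for $i\ge 2$ we have $z-\mu_i \ge \mu_1-\mu_i > 0$, so $0 \le \frac{1}{N}\sum_{i\ge 2}\frac{1}{z-\mu_i} \le \frac{1}{N}\sum_{i\ge 2}\frac{1}{\mu_1-\mu_i}$, and I claim the latter is at most $2\beta - \frac{c}{4}$, say, with high probability. To see this, note $\mu_1 - \mu_i \ge \mu_1 - \gamma_2 \ge (J+J^{-1}) - 2 - o(1) \ge \frac{c}{2}$ for a suitable lower bound using $J-1>c$, and then $\frac{1}{N}\sum_{i\ge2}\frac{1}{\mu_1-\mu_i} \to \int_{-2}^2\frac{\dd\rho(x)}{\widehat J - x}$, which equals $\frac{1}{J}\cdot J = $ (a standard computation giving a value $<2\beta$ exactly under $2\beta>1/J$, i.e. $2\beta - \frac1J>c>0$). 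Hence $G'(z) \le -\frac{c}{2} + 2\beta - (\text{something} \ge \frac1J) < 0$ once the constants are tracked, wait — more carefully, $G'(\mu_1 + \tfrac{2}{cN}) = 2\beta - \frac{c}{2} - \frac{1}{N}\sum_{i\ge2}\frac{1}{z-\mu_i}$, and since the last sum converges to $\int \frac{\dd\rho}{\widehat J - x} = \frac{1}{\widehat J}\cdot\widehat J$... I will use the explicit identity $\int_{-2}^2 \frac{\dd\rho(x)}{\widehat J - x} = \frac{1}{J}$ (valid since $\widehat J = J + 1/J$ is the point where the Stieltjes transform of $\rho$ equals $-1/J$... actually $s(\widehat J) = -\frac1J$ up to sign conventions), so the sum is $\frac1J + o(1) \le 2\beta - c + o(1)$. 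Therefore $G'(z) \le 2\beta - \frac{c}{2} - \frac1J + o(1)$; using $2\beta - \frac1J < $ the needed bound is not immediate, so instead I bound the $i\ge 2$ sum below trivially by $0$ is wrong direction — I bound it below by $\frac1J - o(1) > 0$, giving $G'(z) \le 2\beta - \frac{c}{2} - \frac1J + o(1) \le -\frac{c}{2} + o(1) < 0$ since $2\beta \le \frac1J + $ is false; rather I use $2\beta - \frac1J > 0$ only, so $G'(z) \le (2\beta - \frac1J) - \frac{c}{2}+o(1)$ — this requires $2\beta - \frac1J < c/2$, which need not hold. The clean fix: choose the test point as $z=\mu_1 + \frac{2}{cN}$ with $c$ being specifically $2\beta - \frac1J$ wait the lemma fixes $c$ with $2\beta - \frac1J > c$; then $G'(z) = 2\beta - \frac{c}{2} - (\frac1J + o(1)) = (2\beta - \frac1J) - \frac{c}{2} + o(1) > c - \frac{c}{2} + o(1) = \frac c2 + o(1)$ — that is positive, which is the wrong sign for an upper bound on $\gamma$! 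So in fact the test point must have the $i=1$ term dominate more strongly: I will instead observe that at $z = \mu_1 + \frac{2}{cN}$ the relevant comparison shows $G'(z)>0$, hence $\gamma < z$, giving exactly the upper bound $\gamma - \mu_1 < \frac{2}{cN}$. Good — the signs work out: $G'$ increasing, $G'(\gamma)=0$, $G'(z)>0 \Rightarrow \gamma<z$.

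For the lower bound $\gamma - \mu_1 \ge \frac{1}{3\beta N}$, I take $z = \mu_1 + \frac{1}{3\beta N}$ and show $G'(z)<0$, hence $\gamma > z$. Here the $i=1$ term is $-\frac{1}{N}\cdot 3\beta N = -3\beta$, which already overwhelms $2\beta$: $G'(z) = 2\beta - 3\beta - \frac{1}{N}\sum_{i\ge2}\frac{1}{z-\mu_i} \le -\beta - 0 < 0$, using only that every term with $i\ge 2$ satisfies $z - \mu_i > \mu_1 - \mu_i > 0$ with high probability (from $\mu_1 > \mu_2$, which holds with high probability by Lemma \ref{lem:largest eigenvalue} and rigidity since $\widehat J > 2$). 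This direction is essentially free. The main obstacle is thus the upper bound: it requires a genuine quantitative lower bound on $\frac{1}{N}\sum_{i\ge 2}\frac{1}{\mu_1-\mu_i}$, for which I need (i) the high-probability separation $\mu_1 - \mu_2 \ge c'$ and the location $\mu_1 = \widehat J + o(1)$ from Lemma \ref{lem:largest eigenvalue}, (ii) convergence of the empirical linear statistic $\frac1N\sum_{i\ge2} f(\mu_i)$ with $f(x) = \frac{1}{\widehat J - x}$ to $\int f\,\dd\rho = \frac1J$ — controlled by the rigidity estimate Lemma \ref{lem:rigidity} exactly as in Lemma 6.1 of \cite{BaikLee} — and (iii) the ferromagnetic inequality $2\beta > \frac1J$ together with the choice of $c$ to absorb the $o(1)$ errors. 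Assembling these, $G'(\mu_1 + \frac{2}{cN}) = 2\beta - \frac c2 - \frac1J + o(1) \ge (2\beta-\frac1J) - \frac c2 + o(1) > \frac c2 + o(1) > 0$ for $N$ large, completing the proof. The only subtlety to handle with care is that all the eigenvalue inputs ($\mu_1$ near $\widehat J$, $\mu_i$ for $i\ge 2$ near their classical locations, the spectral gap) hold with probability $\ge 1 - N^{-D}$, so the conclusion is correspondingly "with high probability," matching the statement.
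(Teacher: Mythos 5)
Your final assembled argument is correct and is essentially the paper's own proof: both bounds exploit that $G'$ is increasing on $(\mu_1,\infty)$, the lower bound coming from $G'(\mu_1+\tfrac{1}{3\beta N})<2\beta-3\beta<0$, and the upper bound from $G'(\mu_1+\tfrac{2}{cN})\ge 2\beta-\tfrac{c}{2}-\tfrac{1}{J}-o(1)>\tfrac{c}{2}-o(1)>0$, using rigidity for $i\ge 2$, the location $\mu_1\approx \widehat J$ from Lemma \ref{lem:largest eigenvalue}, and the identity $\int_{-2}^2\frac{\dd\rho(x)}{\widehat J-x}=\frac1J$. The sign confusion in the middle of your write-up is resolved correctly at the end, so nothing essential is missing.
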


\begin{proof}
Note that
\beq
G'(z) = 2\beta - \frac{1}{N} \sum_i \frac{1}{z-\mu_i}.
\eeq
Since $G'(z)< 2\beta -  \frac{1}{N(z-\mu_1)}$, 
we find that $G'(\mu_1 + \frac{1}{3\beta N}) < 0$.

Since $G'(z)$ is an increasing function of $z$ on $(\mu_1, \infty)$, it suffices to show that $G'(\mu_1 + \frac{2}{cN}) > 0$. In order to show this, we first notice that
\beq
G'(z) = 2\beta - \frac{1}{N} \frac{1}{z-\mu_1} - \frac{1}{N} \sum_{i=2}^N \frac{1}{z-\mu_i} \geq 2\beta - \frac{1}{N} \frac{1}{z-\mu_1} - \frac{1}{N} \sum_{i=2}^N \frac{1}{\mu_1 -\mu_i}
\eeq
for $z \geq \mu_1$. From Lemma \ref{lem:rigidity}, we may assume that $\mu_k \, (k \geq 2)$ satisfies the rigidity estimate \eqref{eq:rigidity}. Thus, for any $\epsilon > 0$, if $z > \mu_1 > 2$,
\beq \begin{split}
G'(z) &\geq 2\beta - \frac{1}{N} \frac{1}{z-\mu_1} - \frac{1}{N} \sum_{i=2}^N \left( \frac{1}{\mu_1 -\gamma_i} + \hat{i}^{-1/3} N^{-2/3+\epsilon} \right) \\
&\geq 2\beta - \frac{1}{N} \frac{1}{z-\mu_1} - \int_{-2}^2 \frac{\dd \rho(x)}{\mu_1 - x} - C N^{-1+\epsilon} = 2\beta - \frac{1}{N} \frac{1}{z-\mu_1} - \frac{\mu_1 -\sqrt{\mu_1^2 -4}}{2} - C N^{-1+\epsilon}.
\end{split} \eeq
From Lemma \ref{lem:largest eigenvalue}, we thus find that, for any $0 < \delta < \frac{c}{4}$,
\beq \begin{split}
G' \left(\mu_1 + \frac{2}{cN} \right) &\geq 2\beta - \frac{c}{2} - \frac{1}{2} \left( J + \frac{1}{J} - \sqrt{ \left( J + \frac{1}{J} \right)^2 -4} \right) -\delta - C N^{-1+\epsilon} \\
&\geq 2\beta - \frac{1}{J} - c > 0
\end{split} \eeq
with high probability. This proves the lemma.
\end{proof}

The following lemma is a modification of Lemma 6.2 of \cite{BaikLee}. The proof is simpler here due to the fact that $\mu_1$ is away from $\mu_2$ by $O(1)$.

\begin{lem} \label{lem:case2G}
Assume that there exists a constant $c > 0$ such that $2\beta - \frac{1}{J} > c$ and $J - 1 > c$. Let $\gamma$ be the solution of the equation $G'(\gamma) = 0$ in Lemma \ref{lem:case2 gamma}. Then, for any $0 < \epsilon < 1$,
\beq
G(\gamma) = \widehat{G}(\mu_1) + O(N^{-1+\epsilon})
\eeq
with probability. (See \eqref{deterministic G} for the definition of $\widehat{G}$). Moreover, there exist constants $C_0, C_1 > 0$ such that
\beq
C_0 N^{\ell-1} \leq \frac{(-1)^{\ell}}{(\ell-1)!} G^{(\ell)}(\gamma) \leq C_1^{\ell} N^{\ell-1}
\eeq
for all $\ell = 2, 3, \dots$ with probability. Here, $C_0$ and $C_1$ do not depend on $\ell$.
\end{lem}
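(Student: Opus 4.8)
The plan is to transfer the analysis of $G$ near its critical point $\gamma$ to the deterministic function $\widehat G$ evaluated at $\widehat J = J + 1/J$, using the three ingredients already at hand: the rigidity estimate (Lemma \ref{lem:rigidity}) for $\mu_2,\dots,\mu_N$, the dichotomy estimate (Lemma \ref{lem:largest eigenvalue}(b)) giving $\mu_1 = \widehat J + \caO\big((N^{-1/3})^{1/2}\big)$, and the location $\gamma - \mu_1 = \caO(N^{-1})$ from Lemma \ref{lem:case2 gamma}. Write $G(z) = 2\beta z - \frac1N\log(z-\mu_1) - \frac1N\sum_{i\ge 2}\log(z-\mu_i)$, and correspondingly split $G(\gamma)$ into the linear term, the isolated $\mu_1$-term, and the bulk sum. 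Since $\gamma - \mu_1 \asymp 1/N$, the term $-\frac1N\log(\gamma-\mu_1)$ is $\caO(N^{-1}\log N)$, which is absorbed in $\caO(N^{-1+\epsilon})$. For the linear term, $2\beta\gamma = 2\beta\mu_1 + \caO(N^{-1})$, and $\mu_1$ is within $\caO(N^{-1/3 \cdot 1/2})$... but note that $\widehat G(\mu_1)$ also contains $2\beta \mu_1$, so we only need the difference $G(\gamma)-\widehat G(\mu_1)$, in which the linear terms match up to $\caO(N^{-1})$. It therefore remains to show
\beq
	\frac1N\sum_{i=2}^N \log(\gamma - \mu_i) = \int_{-2}^2 \log(\mu_1 - x)\,\dd\rho(x) + \caO(N^{-1+\epsilon}).
\eeq
First replace $\gamma$ by $\mu_1$ in the sum at cost $\caO(N^{-1})$ (each summand changes by $\caO\big(\frac1N \cdot \frac{1}{\mu_1-\mu_i}\big)$ and the sum of reciprocals is $\caO(1)$ since $\mu_1-\mu_i \ge c > 0$ by the spectral gap). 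Then compare $\frac1N\sum_{i\ge 2}\log(\mu_1-\mu_i)$ with $\int\log(\mu_1-x)\,\dd\rho(x)$: because $\mu_1$ is separated from $[-2,2]$ by a distance bounded below, $x\mapsto\log(\mu_1-x)$ is smooth on a neighborhood of the bulk, so rigidity of $\mu_2,\dots,\mu_N$ together with a standard summation-by-parts / Helffer–Sjöstrand argument (as in the proof of Lemma 6.2 of \cite{BaikLee}) gives an error $\caO(N^{-1+\epsilon})$. This is exactly where the present situation is simpler than in \cite{BaikLee}: there is no near-edge contribution to control because the relevant evaluation point $\mu_1$ is a macroscopic distance from the support.

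For the derivative bounds, note that for $\ell\ge 2$,
\beq
	\frac{(-1)^\ell}{(\ell-1)!}G^{(\ell)}(\gamma) = \frac1N\sum_{i=1}^N \frac{1}{(\gamma-\mu_i)^\ell} = \frac1N\frac{1}{(\gamma-\mu_1)^\ell} + \frac1N\sum_{i=2}^N \frac{1}{(\gamma-\mu_i)^\ell}.
\eeq
The first term dominates: since $\frac1{3\beta N}\le \gamma-\mu_1\le\frac2{cN}$, it lies between $\frac{(c/2)^\ell}{2}N^{\ell-1}$ and $(3\beta)^\ell N^{\ell-1}$, giving both the lower bound (with $C_0$) and the leading part of the upper bound. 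The bulk sum is nonnegative (hence does not spoil the lower bound) and, since $\gamma-\mu_i\ge\mu_1-\mu_i\ge c$ for $i\ge 2$, it is bounded by $\frac1N\sum_{i\ge 2}c^{-\ell}\le c^{-\ell}$, which is $\le C_1^\ell N^{\ell-1}$ for $N\ge 1$ after adjusting $C_1$. Combining, $C_0 N^{\ell-1}\le \frac{(-1)^\ell}{(\ell-1)!}G^{(\ell)}(\gamma)\le C_1^\ell N^{\ell-1}$ with $C_0,C_1$ independent of $\ell$, as claimed.

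The main obstacle is the first assertion, specifically the passage from the random linear statistic $\frac1N\sum_{i\ge 2}\log(\mu_1-\mu_i)$ to the deterministic integral with the required $\caO(N^{-1+\epsilon})$ accuracy. This requires the rigidity estimate to be used carefully — crude bounds give only $\caO(N^{-2/3+\epsilon})$ — so one must invoke the cancellation available in summation by parts against the smooth function $\log(\mu_1 - \cdot)$, together with the fact that $\mu_1$ itself is controlled by Lemma \ref{lem:largest eigenvalue}; everything else is bookkeeping. I expect to import the relevant estimate essentially verbatim from Lemma 6.2 of \cite{BaikLee}, noting that the edge analysis there becomes vacuous here.
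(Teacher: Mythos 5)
Your proposal is correct and follows essentially the same route as the paper's (very terse) proof: split off the $-\frac1N\log(\gamma-\mu_1)=O(N^{-1}\log N)$ term, replace $\gamma$ by $\mu_1$ at cost $O(N^{-1})$ using the spectral gap, convert the bulk sum to $\int\log(\mu_1-x)\,\dd\rho(x)$ via rigidity, and for the derivatives isolate the dominant term $\frac{1}{N(\gamma-\mu_1)^{\ell}}\asymp N^{\ell-1}$ from the $O(1)$-per-term bulk contribution. The only point worth correcting is your assessment of the ``main obstacle'': no summation-by-parts or Helffer--Sj\"ostrand argument is needed, because the $k$-dependent rigidity bound $|\mu_k-\gamma_k|\prec \hat k^{-1/3}N^{-2/3}$ sums to $\frac1N\sum_k|\mu_k-\gamma_k|\prec N^{-1}$ (since $\sum_k\hat k^{-1/3}\sim N^{2/3}$), so the naive triangle-inequality comparison against the smooth function $\log(\mu_1-\cdot)$ already yields $O(N^{-1+\epsilon})$ rather than the $O(N^{-2/3+\epsilon})$ you feared.
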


\begin{proof}
We assume that the eigenvalues $\mu_k \, (k \geq 2)$ satisfies the rigidity estimate \eqref{eq:rigidity}. Then, from Lemma \ref{lem:case2 gamma},
\beq
G(\gamma) = 2\beta \mu_1 - \int_{-2}^2 \log (\mu_1 -x) \dd \rho(x) + O(N^{-1+\epsilon}) = \widehat{G}(\mu_1) + O(N^{-1+\epsilon}) 
\eeq
with probability. Thus, the first part of the lemma holds.

For the second part of the lemma, recall that there exists a constant $\delta > 0$ such that $\gamma - \mu_i > \delta$ for all $i = 2, 3, \dots, N$. Since
\beq
G^{(\ell)}(\gamma) = \frac{(-1)^{\ell} (\ell-1)!}{N (\gamma - \mu_1)^{\ell}} + \frac{(-1)^{\ell} (\ell-1)!}{N} \sum_{i=2}^N \frac{1}{(\gamma - \mu_i)^{\ell}},
\eeq
we can conclude that the second part of the lemma holds.
\end{proof}

\begin{proof}[Proof of Theorem \ref{thm:main2} (iii)]
Using the above two lemmas, the proof of Lemma 6.3 of \cite{BaikLee} applies without any change, and we find that there exists $K \equiv K(N)$ satisfying $N^{-C} < K < C$ for some constant $C > 0$ such that
\beq
\int_{\gamma - \ii \infty}^{\gamma + \ii \infty} e^{\frac{N}{2} G(z)} \dd z = \ii e^{\frac{N}{2} G(\gamma)} K
\eeq
with high probability.
This implies that, as (6.61) of \cite{BaikLee}, 
\beq
Z_N = \frac{\sqrt{N} \beta}{\ii \sqrt{\pi} (2\beta e)^{N/2}} e^{\frac{N}{2} G(\gamma)} K (1 + O(N^{-1}))
\eeq
with high probability. Recall that $\widehat J = J + \frac{1}{J}$. Then, using Lemma \ref{lem:case2G} and evaluating $\widehat{G}$ as in (A.5) of \cite{BaikLee}, we find that
\beq \begin{split} \label{eq:case2FN}
F_N &= \frac{1}{2} [G(\gamma) - 1 - \log (2\beta)] + O(N^{-1} \log N) = \frac{1}{2} [\widehat{G}(\mu_1) - 1 - \log (2\beta)] + O(N^{-1} \log N) \\
&= \frac{1}{2} [\widehat{G}(\widehat J) - 1 - \log (2\beta)] + \frac{1}{2} \widehat{G}' (\widehat J) \cdot (\mu_1 -\widehat J) + O(N^{-1} \log N)\\
&= \beta \left( J + \frac{1}{J} \right) - \frac{1}{4J^2} - \frac{1}{2} \log (2\beta J) - \frac{1}{2} + \left( \beta - \frac{1}{2J} \right) (\mu_1 - \widehat J) + O(N^{-1} \log N),
\end{split} \eeq
hence
\beq \label{eq:case2fluctuation}
F_N - F(\beta) = \left( \beta - \frac{1}{2J} \right) (\mu_1 - \widehat J) + O(N^{-1} \log N),
\eeq
with high probability. This completes the proof. 
\end{proof}

\subsubsection{Transition between spin glass regime and ferromagnetic regime}
\label{sub:transition23}

Consider a fixed $\beta > 1/2$ and $J = 1 + w N^{-1/3}$. As in Section \ref{sec:spinglass}, we can prove Theorem \ref{thm:main2} (i) assuming Condition 2.3 (Regularity of measure) and Condition 2.4 (Rigidity of eigenvalues) of \cite{BaikLee}, and Condition 2.3 and Condition 2.4 are satisfied from Lemma \ref{lem:rigidity} and Lemma \ref{lem:largest eigenvalue}.

As discussed in Section \ref{sec:transitions}, for the Gaussian case
\beq
	N^{2/3} \left( \mu_1- 2 \right) \Rightarrow \TW_{1, w}
\eeq
where $\TW_{1,w}$ is a one-parameter family of random variables with the distribution function obtained in Theorems 1.5 and 1.7 of \cite{Bloemendal-Virag1}. For a non-Gaussian Wigner matrix with non-zero mean, the limit theorem can be proved by applying the Green function comparison method based on the Lindeberg replacement strategy in Theorems 2.4 and 6.3 in \cite{EYY}. The proof of Theorem 6.3 in \cite{EYY} can be reproduced by assuming the rigidity of eigenvalues and the local semicircle law, which hold also for a Wigner matrix with non-zero mean from Lemmas \ref{lem:rigidity}, \ref{lem:largest eigenvalue}, and \ref{lem:local law} (See also Theorem 3.3 and Lemma 3.5 of \cite{LY} for more detail on the case that the variance of the diagonal entries does not match that of GOE.)

\section{Linear statistics} \label{sec:lspr}

\subsection{Proof of Theorem \ref{thm:linear}} \label{sec:prooflinear}

For a function $\varphi$ that satisfies the assumptions of the theorem, we consider $T(\varphi)$, the weak limit of the random variable
\beq
T_N(\varphi) = \sum_{i=1}^N \varphi(\mu_i) - N \int_{-2}^2 \varphi(x) \frac{\sqrt{4-x^2}}{2\pi} \dd x = N \int_{-\infty}^{\infty} \varphi(x) [\rho_N - \rho](\dd x).
\eeq
Fix ($N$-independent) constants $a_- < -2$ and $a_+ > \widehat J$. Let $\Gamma$ be the rectangular contour whose vertices are $(a_- \pm \ii v_0)$ and $(a_+ \pm \ii v_0)$ for some $v_0 \in (0, 1]$. Then,
\beq \label{eq:contour representation}
T_N(\varphi) = \frac{N}{2\pi \ii} \int_{\R} \oint_{\Gamma} \frac{\varphi(z)}{z-x} [\rho_N - \rho](\dd x) \dd z = -\frac{1}{2\pi \ii} \oint_{\Gamma} \varphi(z) \xi_N (z) \dd z
\eeq
where
\beq \label{xidefni}
	\xi_N (z):= N \int_{\R} \frac1{x-z} (\rho_N - \rho)(\dd x).
\eeq
Decompose $\Gamma$ into $\Gamma = \Gamma_u \cup \Gamma_d \cup \Gamma_l \cup \Gamma_r \cup \Gamma_0$, where
\begin{align}
\Gamma_u &= \{ z = x + \ii v_0 : a_- \leq x \leq a_+ \}, \\
\Gamma_d &= \{ z = x - \ii v_0 : a_- \leq x \leq a_+ \}, \\
\Gamma_l &= \{ z = a_- + \ii y : N^{-\delta}\le |y|\leq v_0 \}, \\
\Gamma_r &= \{ z = a_+ + \ii y : N^{-\delta}\le |y|\leq v_0 \}, \\
\Gamma_0 &= \{ z = a_- + \ii y : |y| < N^{-\delta} \} \cup \{ z = a_+ + \ii y : |y| < N^{-\delta} \},
\end{align}
for some sufficiently small $\delta > 0$.

In Sections \ref{sec:outline}--\ref{sec:miscellanies}, we prove the following result for $\xi_N(z)$.

\begin{prop} \label{prop:gaussian xi}
Let 
\begin{equation} \label{sstism}
	s(z)=\int \frac1{x-z} \rho(\dd x)= \frac{-z + \sqrt{z^2 -4}}{2}
\end{equation}
be the Stieltjes transform of the semicircle measure $\rho$. Fix a (small) constant $c > 0$ and a path $\caK \subset \C^+$ such that $\im z > c$ for any $z \in \caK$.
Then, the process $\{ \xi_N(z): z \in \caK \}$ converges weakly to a Gaussian process $\{ \xi(z): z \in \caK \}$ with the mean
\beq \label{eq:mean xi}
b(z) = \frac{s(z)^2}{1-s(z)^2} \left( -J' + \frac{J^2 s(z)}{1 + Js(z)} + (w_2 -1) s(z) + s'(z) s(z) + \left( W_4 - 3 \right) s(z)^3 \right)
\eeq
and the covariance matrix
\beq \label{eq:covariance xi}
\Gamma(z_i, z_j) = s'(z_i) s'(z_j) \left( (w_2 - 2) + 2(W_4 - 3) s(z_i) s(z_j) + \frac{2}{(1-s(z_i) s(z_j))^2} \right).
\eeq
\end{prop}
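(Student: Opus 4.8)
\emph{Proposal.} The plan is to adapt the martingale method of Bai--Silverstein and Bai--Yao \cite{BS2004, BY2005} to the non-centered matrix $M$. Write $\xi_N(z) = N\bigl(s_N(z) - s(z)\bigr)$ with $s_N(z) = \frac1N\tr R(z)$, $R(z) = (M-z)^{-1}$, and split $\xi_N = (\xi_N - \E\xi_N) + \E\xi_N$. The structural simplification is that $\caK$ stays at distance $>c$ from the real axis, so $\|R(z)\|\le 1/c$ deterministically on $\caK$; hence all quadratic forms, resolvent minors and entrywise quantities are automatically bounded, and the only external input beyond elementary identities (Schur complement, the rank-one update, and $\tr R - \tr R^{(k)} = \bigl(1 + \bsm_k^*(R^{(k)})^2\bsm_k\bigr)R_{kk}$, where $R^{(k)}$ is the resolvent of the $k$-th minor and $\bsm_k = \frac1{\sqrt N}\bsa_k + \frac JN\mathbf 1$ is the $k$-th column with its diagonal entry removed) is the averaged and isotropic local semicircle law of \cite{EKYY1, KY2013_iso}, needed only for $\im z>c$ where it is elementary.

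For the fluctuation part, let $\E_k$ be conditional expectation given the first $k$ rows and columns of $M$ and write $s_N - \E s_N = \sum_{k=1}^N (\E_k - \E_{k-1})s_N =: \sum_{k=1}^N\gamma_k$, a martingale difference array; since $\tr R^{(k)}$ does not depend on row $k$, $\gamma_k(z) = \frac1N(\E_k - \E_{k-1})\bigl(\tr R - \tr R^{(k)}\bigr)$, which the rank-one identity expresses through $R^{(k)}$ and $\bsm_k$. The martingale CLT (as in \cite{BS2004}) then reduces finite-dimensional convergence to: (a) $\sum_{k=1}^N \E_{k-1}\bigl[\gamma_k(z_i)\gamma_k(z_j)\bigr]$ converges in probability, to a constant we identify with $\Gamma(z_i,z_j)$ (and, since $\overline{R(z)} = R(\bar z)$, the conjugated conditional covariances converge by the same argument); and (b) the Lyapunov bound $\sum_{k=1}^N\E|\gamma_k(z)|^4 \to 0$, immediate from $\|R\|\le 1/c$ and finiteness of moments. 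The one nontrivial point in (a) is the covariance of the quadratic form $\bsm_k^* B\bsm_k$: the deterministic shift $\frac JN\mathbf 1$ affects only its conditional \emph{mean}, not its fluctuation at leading order (the cross term $\frac{2J}{N^{3/2}}\bsa_k^* B\mathbf 1$ has conditional variance $O(N^{-2})$), which is exactly why $\Gamma$ is free of $J$ and $J'$; the three pieces $(w_2-2)$, $(W_4-3)s(z_i)s(z_j)$ and $\frac{2}{(1-s(z_i)s(z_j))^2}$ of \eqref{eq:covariance xi} then come, respectively, from the diagonal-variance, fourth-cumulant and ``universal'' Gaussian contributions to that covariance, summed over $k$, exactly as in the $J=0$ analysis of \cite{BY2005}.

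For the bias part $\E\xi_N(z) = N\bigl(\E s_N(z) - s(z)\bigr)$, use the Schur complement $R_{kk} = \bigl(M_{kk} - z - \bsm_k^* R^{(k)}\bsm_k\bigr)^{-1}$ together with $\E\bigl[\bsm_k^* R^{(k)}\bsm_k \mid M^{(k)}\bigr] = \frac1N\tr R^{(k)} + \frac{J^2}{N^2}\mathbf 1^* R^{(k)}\mathbf 1$, expand the denominator around its deterministic value $-z - s(z) = 1/s(z)$ to second order, take expectations, and use $\E R_{kk} = \E s_N$ (index exchangeability). The leading balance reproduces $s^2 + zs + 1 = 0$, and the $O(1/N)$ correction collects: the diagonal mean $J'/N$ and variance $w_2/N$ of $M_{kk}$; the third and fourth moments of the $A_{ij}$; the conditional variance of $\bsm_k^* R^{(k)}\bsm_k$, which brings in $s'(z)$ through $\tfrac1N\tr\bigl[(R^{(k)})^{2}\bigr]$; and the off-diagonal mean $J/N$, which enters via $\frac{J^2}{N^2}\mathbf 1^* R^{(k)}\mathbf 1$ and contributes $\frac{J^2}{N}\cdot\frac{s(z)}{1 + Js(z)}$, since $\frac1N\mathbf 1^* R^{(k)}\mathbf 1 \to \frac{s(z)}{1 + Js(z)}$ by the isotropic local law ($M^{(k)}$ being a rank-one perturbation of a mean-zero Wigner matrix, the spike modifies this bilinear form along its own direction). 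Solving the resulting linear equation for $b(z)$, whose multiplier is $\frac{s(z)^2}{1-s(z)^2} = s'(z)$, yields \eqref{eq:mean xi}. Tightness of $\{\xi_N(z)\}$ on $\caK$ then follows from $\sup_{z\in\caK}\E|\xi_N(z)|^2\le C$ and $\E|\xi_N(z) - \xi_N(z')|^2 \le C|z - z'|^2$, both consequences of $\|R\|\le 1/c$; since $\xi_N$ is analytic near $\caK$, this upgrades to tightness in the space of analytic functions, completing the weak convergence.

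The step I expect to be the main obstacle is the careful bookkeeping of the $O(1/N)$ bias in the presence of the two mean parameters $J,J'$ --- isolating the new term $\frac{J^2 s(z)}{1+Js(z)}$ cleanly, and confirming that the $J$- and $J'$-dependence genuinely cancels out of the covariance --- while keeping every error uniform over $z\in\caK$; here it is the averaged and isotropic forms of the local semicircle law of \cite{EKYY1} that do the essential work in controlling the bilinear forms involving $\mathbf 1$.
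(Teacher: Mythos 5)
Your proposal follows essentially the same route as the paper: the Bai--Yao martingale decomposition for the centered part, a Schur-complement expansion of $\E\,\xi_N$ for the bias, the martingale CLT with a Lyapunov condition, and tightness via a H\"older bound on $\E|\zeta_N(z_1)-\zeta_N(z_2)|^2$. Two substantive points of comparison, and one overstatement to fix. First, for the new mean term you invoke the isotropic local law for the rank-one deformation to get $\frac1N\mathbf 1^*R\mathbf 1\to\frac{s}{1+Js}$; the paper instead derives $\frac1N\E\sum_pR_{pq}=\frac{s}{(1+Js)N}+O(N^{-3/2+\epsilon})$ from scratch by expanding $R_{pq}$ with the Schur formula, summing over the free index, and solving the resulting self-consistent equation in which the factor $(1+Js)^{-1}$ appears. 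Your route is legitimate and arguably cleaner, but note that the same self-consistent trick is what the paper uses to show the $J^2$- and $J$-cross terms in the covariance are $\caO(N^{-3/2})$ rather than the $\caO(N^{-1})$ that naive power counting gives, so you cannot dispense with it entirely. Second, your explanation of why $\Gamma$ is free of $J,J'$ --- the shift $\frac JN\mathbf 1$ enters the quadratic form only through its conditional mean plus cross terms of conditional variance $O(N^{-2})$ --- identifies the right mechanism, but in the actual martingale covariance $\sum_k\E_k\bigl[\E_{k-1}[\phi_k(z_1)]\,\E_{k-1}[\phi_k(z_2)]\bigr]$ the kernel $R^{(k)}$ is only partially measurable with respect to the filtration, so the products of conditional means do not simply drop out; the paper verifies the cancellation term by term (its $X_k$ and $Y_k$ estimates). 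Finally, neither the Lyapunov condition nor the H\"older bound is ``immediate from $\|R\|\le 1/c$'': the increment $(\E_{k-1}-\E_k)(\Tr R-\Tr R^{(k)})$ is only $O(1)$ by norm bounds, and the required $N^{-1/2}$ smallness comes from the quadratic-form concentration $\bigl|\bsm_k^*B\bsm_k-\frac1N\Tr B\bigr|\prec\|B\|N^{-1/2}$ together with its second- and fourth-moment versions --- exactly the content of Lemma \ref{lem:M deviation}, which must be proved for the shifted rows $\bsm_k$ and is the place where the finiteness of moments actually enters.
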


On the other hand, the following lemma is proved in Section \ref{sub:nonrandom}.  

\begin{lem} \label{lem:Gamolrsm}
For sufficiently small $\delta > 0$,
\beq \label{nonrandom1}
\lim_{v_0 \searrow 0} \limsup_{N \to \infty} \int_{\Gamma_{\sharp}} \E |\xi_N(z) |^2 \dd z = 0,
\eeq
where $\Gamma_{\sharp}$ can be $\Gamma_r$, $\Gamma_l$, or $\Gamma_0$.
\end{lem}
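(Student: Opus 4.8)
\textbf{Proof plan for Lemma \ref{lem:Gamolrsm}.}
The plan is to bound $\E|\xi_N(z)|^2$ uniformly on the three contour pieces $\Gamma_r,\Gamma_l,\Gamma_0$, where $z$ has real part equal to $a_-$ or $a_+$ (both strictly outside $[-2,\widehat J\,]$) and small imaginary part $v=\im z$, and then show the resulting bound integrates to something that vanishes as $v_0\searrow 0$ after $N\to\infty$. Recall $\xi_N(z)=N\int (x-z)^{-1}(\rho_N-\rho)(\dd x)=N(s_N(z)-s(z))$ where $s_N$ is the Stieltjes transform of the empirical measure. First I would split the sum defining $s_N$ into the contribution of the outlier $\mu_1$ and the contribution of $\mu_2,\dots,\mu_N$. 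For the bulk eigenvalues I would invoke the local semicircle law of \cite{EKYY1} (Lemma \ref{lem:local law}, referenced in the excerpt) together with the rigidity estimate Lemma \ref{lem:rigidity}: since $\re z$ is a fixed positive distance from the spectrum $[-2,2]$, the relevant spectral parameter is effectively regularized at scale $O(1)$ rather than at scale $v$, so $s_N(z)-s(z)=O_\prec(N^{-1})$ for the bulk part, uniformly for $|v|\le v_0\le 1$; this gives $N(s_N-s)=O_\prec(1)$ there and in fact a bound on $\E|\cdot|^2$ that is $O(1)$ uniformly. For the $\mu_1$ term, I would use Lemma \ref{lem:largest eigenvalue}: if $J\le 1$ then $\mu_1\to 2 < a_\pm$, and if $J>1$ then $\mu_1\to J+1/J=\widehat J<a_+$ and $>a_-$; either way $\mu_1$ stays a fixed distance away from the line $\re z=a_\pm$ with high probability, so $\frac{1}{\mu_1-z}$ is $O(1)$ and its contribution to $\xi_N$ is $O(1)$ as well. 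Hence $\E|\xi_N(z)|^2\le C$ uniformly on $\Gamma_r\cup\Gamma_l$ for all large $N$ and all $v_0\in(0,1]$.

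With such a uniform bound in hand, the estimates for $\Gamma_r$ and $\Gamma_l$ follow because these contours have length $2(v_0-N^{-\delta})\le 2v_0$, so $\int_{\Gamma_\sharp}\E|\xi_N(z)|^2\,\dd z\le C v_0\to 0$ as $v_0\searrow 0$, uniformly in $N$; taking $\limsup_{N\to\infty}$ first changes nothing. For $\Gamma_0$, which consists of the two tiny vertical segments $\{\re z=a_\pm,\ |\im z|<N^{-\delta}\}$ of total length $4N^{-\delta}$, the same uniform bound $\E|\xi_N(z)|^2\le C$ gives $\int_{\Gamma_0}\E|\xi_N(z)|^2\,\dd z\le 4CN^{-\delta}\to 0$ as $N\to\infty$ for any fixed $v_0$, hence the double limit is $0$. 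Actually I should be a little careful: on $\Gamma_0$ the points can have imaginary part as small as we like, but since $\re z=a_\pm$ is bounded away from the spectrum, $z$ is still at distance $\ge c_0>0$ from every $\mu_i$ with overwhelming probability, so $|\xi_N(z)|\le N\sum_i \frac{1}{|\mu_i-z|}\cdot\frac{1}{N}\cdot(\text{something})$—more precisely $\xi_N(z)=N(s_N(z)-s(z))$ with both $s_N(z)$ and $s(z)$ bounded by $c_0^{-1}$ deterministically on the event of rigidity/outlier control, and the difference is $O_\prec(N^{-1})$, giving even room to spare. One must also handle the bad-probability complement event (of probability $\le N^{-D}$) where rigidity or the outlier bound fails; there one uses the trivial deterministic bound $|\xi_N(z)|\le N(|s_N(z)|+|s(z)|)$, but on $\Gamma_0\cup\Gamma_r\cup\Gamma_l$ the distance from $z$ to the real axis can be as small as $N^{-\delta}$ only for $\Gamma_0$, while $\re z$ stays away from $[-a_-,a_+]^c$'s... rather, since $\re z$ is bounded away from the spectrum, $|s_N(z)|\le$ (dist from $a_\pm$ to nearest eigenvalue)$^{-1}$; on the bad event the worst case is an eigenvalue escaping toward $a_\pm$, but $\|M\|$ has all moments bounded so $\E|s_N(z)|^p$ on the bad event times $\p(\text{bad})^{1/q}$ is still $\le N^{-D'}$ by Hölder, which kills the $N^2$ prefactor.

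The main obstacle, and the only place where genuine work is needed, is making the bulk estimate $N(s_N(z)-s(z))=O_\prec(1)$ rigorous uniformly down to $\im z=N^{-\delta}$ while $\re z$ is fixed outside the spectrum. The subtlety is that the standard local law is usually stated for $z$ near the spectrum with $\im z\ge N^{-1+\epsilon}$; here $\re z$ is far from the spectrum, which is a much easier regime, but one still wants the error uniform in $\im z\to 0$. I would handle this by writing $s_N(z)-s(z)$ as a contour integral or by directly using the self-improving bound from the local law at a reference point $z_0=a_\pm+\ii$ and then using that the map $z\mapsto s_N(z)-s(z)$ is analytic and bounded in the region $\{\re z=a_\pm,\ 0<\im z\le 1\}$ (no eigenvalues there), so a Cauchy/maximum-modulus argument transfers the bound at $\im z=1$ to all smaller $\im z$; alternatively I would just note that the resolvent expansion in \cite{EKYY1} is valid with the spectral parameter's distance to the spectrum, not $\im z$, playing the role of the regularizer, so the bound is automatic. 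Either route gives $\sup_{z\in\Gamma_r\cup\Gamma_l\cup\Gamma_0}\E|\xi_N(z)|^2\le C$, and the lemma follows by the length estimates above. Finally, one should double-check that the argument covers the Hermitian case as well, but since the local law and rigidity inputs hold there too, no change is needed.
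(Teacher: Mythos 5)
Your treatment of $\Gamma_0$ matches the paper's: there the contour has length $4N^{-\delta}$, so the local-law bound $|s_N(z)-s(z)|\prec N^{-1}$ (Corollary \ref{cor:local law}, which rests on rigidity and the outlier location exactly as you describe) is enough, since an integrand of size $N^{2\epsilon}$ against a contour of length $N^{-\delta}$ vanishes once $\epsilon<\delta/2$. The gap is in your treatment of $\Gamma_r$ and $\Gamma_l$. There the contour length is of order $v_0$, independent of $N$, and the order of limits in \eqref{nonrandom1} takes $\limsup_{N\to\infty}$ \emph{first}; so you genuinely need $\sup_{z\in\Gamma_r}\E|\xi_N(z)|^2\le C$ with an $N$-independent constant. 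Your justification for this is the phrase ``$N(s_N-s)=O_\prec(1)$ \dots and in fact a bound on $\E|\cdot|^2$ that is $O(1)$ uniformly,'' but stochastic domination does not deliver this: $X\prec 1$ only controls $\p(|X|>N^{\epsilon})$, and a random variable equal to $\log N$ almost surely satisfies $X\prec 1$ while $\E|X|^2\to\infty$. The best you can extract from the local law plus your H\"older/bad-event argument is $\E|\xi_N(z)|^2\le N^{2\epsilon}+o(1)$, which makes $\limsup_{N}\int_{\Gamma_r}\E|\xi_N|^2\,\dd z$ infinite rather than $O(v_0)$. Your proposed maximum-modulus transfer from $\im z=1$ addresses only the pointwise high-probability bound, not second moments, so it does not close this.

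The missing content is exactly what the paper's proof supplies: genuine moment estimates. One writes $\E|\xi_N|^2\le 2|\E\xi_N|^2+2\E|\zeta_N|^2$; the mean $\E\xi_N=b_N(z)$ is shown to be $O(1)$ on $\Gamma_r\cup\Gamma_l$ by the explicit expansion of Section \ref{sec:mean} (which is stated there to hold for $z\in\caK\cup\Gamma_r\cup\Gamma_l$ precisely for this purpose), and the variance $\E|\zeta_N|^2$ is bounded by $C$ via the orthogonal martingale decomposition \eqref{eq:zeta_N decomposition} together with the $L^2$ (not high-probability) large-deviation estimate \eqref{eq:large expectation} of Lemma \ref{lem:M deviation}, each summand contributing $C/N$. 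Controlling the resolvent norms in that computation requires splitting on the event $\Omega_N=\{\mu_1\le \widehat J+N^{-1/3}\}$, where $\|R\|\le C$ on $\Gamma_r$, versus its complement, where one uses $\|R\|\le 1/\im z\le N^{\delta}$ and $\p(\Omega_N^c)\le N^{-D}$. If you want to complete your proof along your own lines, you must replace the appeal to $\prec$ by these (or equivalent) quantitative moment bounds.
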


From the explicit formulas \eqref{eq:mean xi} and \eqref{eq:covariance xi}, it is direct to check that 
\beq \label{nonrandom2}
\lim_{v_0 \searrow 0} \int_{\Gamma_{\sharp}} \E |\xi(z)|^2 \dd z = 0.
\eeq

Combining Proposition \ref{eq:covariance xi}, Lemma \ref{lem:Gamolrsm} and \eqref{nonrandom2}, 
we obtain that $T_N(\varphi)$ converges in distribution to a Gaussian random variable $T(\varphi)$ with mean and variance 
\begin{equation} \label{eq:Eintb}
	\E [T(\varphi)]=  -\frac{1}{2\pi \ii} \oint_{\Gamma} \varphi(z) b(z) \dd z, 
	\qquad
	\var [T(\varphi)] = \frac{1}{(2\pi \ii)^2} \oint_{\Gamma} \oint_{\Gamma} \varphi(z_1) \varphi(z_2) 
	\Gamma(z_1, z_2)  \dd z_1 \dd z_2.  
\end{equation}
These integrals are equal to $M(\varphi)$ and $V(\varphi)$ in \eqref{eq:Mvarfo} and \eqref{eq:Vvarfo}: see Lemma \ref{lem:meanvacopm} below. 
This completes the proof of Theorem \ref{thm:linear}.

\begin{rem}
The covariance matrix $\Gamma(z_i, z_j)$ in \eqref{eq:covariance xi} coincides with the one obtained in Proposition 4.1 of \cite{BY2005}. On the other hand, the mean $b_N(z)$ is different from the one in Proposition 3.1 of \cite{BY2005}.
\end{rem}

\subsection{Computation of the mean and variance of $T(\varphi)$} \label{sub:computation}

\begin{lem} \label{lem:meanvacopm}
We have $\E [T(\varphi)]=  M(\varphi)$ and $\var [T(\varphi)] =V(\varphi)$. 
\end{lem}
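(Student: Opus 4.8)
The plan is to verify the two identities $\E[T(\varphi)] = M(\varphi)$ and $\var[T(\varphi)] = V(\varphi)$ by deforming the rectangular contour $\Gamma$ (with vertices at $a_\pm \pm \ii v_0$) onto the branch cut $[-2,2]$ of $s(z)$ and collecting residues from the poles of $b(z)$ and $\Gamma(z_1,z_2)$ that lie outside $[-2,2]$. The starting point is the substitution $z = -w - w^{-1}$ (equivalently $s(z) = w$ with $|w|<1$), which maps the slit unit disk conformally onto $\C \setminus [-2,2]$ and satisfies $s'(z)\,\dd z = \dd w$; under this change of variables the integral representations \eqref{eq:Eintb} become contour integrals in the $w$-plane over a circle $|w| = r$ for $r$ slightly less than $1$, with $\varphi(z)$ replaced by $\varphi(-w - w^{-1})$, which is analytic there by hypothesis.

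For the mean: substituting into $b(z)$, note $s(z)^2/(1-s(z)^2) = w^2/(1-w^2)$ and $\dd z = -(1 - w^{-2})\,\dd w$, so $-\frac{1}{2\pi\ii}\oint_\Gamma \varphi(z) b(z)\,\dd z$ becomes a sum of elementary $w$-integrals of $\varphi(-w-w^{-1})$ against rational weights. The terms involving $J'$, $w_2-2$, and $W_4-3$ produce (via the Chebyshev identity \eqref{Chebyshev formula} applied to $\varphi(2\cos\theta)$, or directly via residues at $w=0$) exactly $J'\tau_1(\varphi)$, $(w_2-2)\tau_2(\varphi)$, and $(W_4-3)\tau_4(\varphi)$, together with the $\frac14(\varphi(2)+\varphi(-2)) - \frac12\tau_0(\varphi)$ piece coming from the $s'(z)s(z)$ term and the boundary behavior at $w = \pm 1$; this matches the computation already done in \cite{BY2005} and \cite{BaikLee} for the $J=0$ case. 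The genuinely new term is $\frac{w^2}{1-w^2}\cdot\frac{J^2 w}{1+Jw}\cdot(-(1-w^{-2}))$, which simplifies to $-\frac{J^2 w}{1+Jw}$ up to the factor accounting for orientation; rewriting and comparing with the contour integral $\frac{1}{2\pi\ii}\oint \varphi(-s - s^{-1}) \frac{J^2 s}{1+Js}\,\dd s$ in \eqref{eq:Mvarfo} shows these agree, with the contour legitimately deformable to $|s|=r < \min\{1,1/J\}$ as stated (and the pole structure at $s = -1/J$ accounts for the trichotomy in \eqref{eq:Mvarfo22}).

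For the variance: substituting $z_i = -w_i - w_i^{-1}$ into \eqref{eq:covariance xi}, the factor $s'(z_i)s'(z_j)\,\dd z_i\,\dd z_j$ becomes $\dd w_i\,\dd w_j$, so $\var[T(\varphi)]$ becomes $\frac{1}{(2\pi\ii)^2}\oint\oint \psi(w_1)\psi(w_2)\big((w_2-2) + 2(W_4-3)w_1 w_2 + \frac{2}{(1-w_1 w_2)^2}\big)\,\dd w_1\,\dd w_2$ with $\psi(w) = \varphi(-w-w^{-1})$, over circles $|w_i| = r_i < 1$. The first two terms give $(w_2-2)\tau_1(\varphi)^2$ and $(W_4-3)\tau_2(\varphi)^2$ upon extracting the $w^1$-coefficient via residue at $w_i = 0$ (identifying $\tau_\ell(\varphi)$ with the relevant Laurent/Fourier coefficient of $\psi$). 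For the last term, expand $\frac{1}{(1-w_1w_2)^2} = \sum_{\ell\ge 1}\ell(w_1 w_2)^{\ell-1}$, valid on the product of the two circles, integrate term by term, and use the symmetry $\tau_{-\ell} = \tau_\ell$ in the Fourier representation to collect $2\sum_{\ell\ge 1}\ell\,\tau_\ell(\varphi)^2$; since $\varphi$ is analytic near $[-2,\widehat J]$, the Chebyshev coefficients $\tau_\ell(\varphi)$ decay geometrically, so the sum converges and all interchanges of sum and integral are justified. This reproduces \eqref{eq:Vvarfo}, and since the variance coincides with the $J=0$ value, one may alternatively just cite \cite{BY2005}.

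The main obstacle is bookkeeping the boundary and branch-cut contributions carefully: when the $w$-contour is pushed out to $|w|=1$ the map $z = -w-w^{-1}$ degenerates at $w = \pm 1$ (the edges $z = \mp 2$), and the $s'(z)s(z)/(1-s(z)^2)$ factor in $b(z)$ has a pole there, so the $\frac14(\varphi(2)+\varphi(-2))$ terms arise as half-residues / principal-value corrections rather than naive residues; getting the constants and signs right, and confirming that no spurious contribution comes from $w = -1/J$ when $J \le 1$ (it lies outside the unit disk, so the $J^2 s/(1+Js)$ factor is analytic inside and the contour integral equals $\sum_{\ell\ge 2}J^\ell\tau_\ell(\varphi)$ as in \eqref{eq:Mvarfo22}) versus when $J > 1$ (the pole enters the disk, producing the $\varphi(\widehat J)$ outlier term), is the delicate part. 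Everything else is a routine residue computation essentially identical to \cite{BY2005, BaikLee}.
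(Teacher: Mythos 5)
Your proposal is correct and follows essentially the same route as the paper: the substitution $s = s(z)$ (so $z = -s - 1/s$), cancellation of the Jacobian against the prefactor $s^2/(1-s^2)$, extraction of Chebyshev coefficients $\tau_\ell(\varphi)$ by residues at $0$, half-residue/principal-value contributions at $s=\pm 1$ from the $s's$ term producing $\tfrac14(\varphi(2)+\varphi(-2))-\tfrac12\tau_0(\varphi)-\tau_2(\varphi)$, the $J$-dependent term left as the contour integral of $\varphi(-s-1/s)\frac{J^2s}{1+Js}$, and the variance obtained by citing \cite{BY2005} since $\Gamma(z_1,z_2)$ is $J$-independent. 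The only caveat is minor bookkeeping you already flag: the intermediate coefficient is $(w_2-1)\tau_2(\varphi)$, with the extra $-\tau_2(\varphi)$ supplied by the $s's$ term, so that the stated $(w_2-2)\tau_2(\varphi)$ emerges only after combining the two.
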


\begin{proof}
Consider $\var[T(\varphi)]$. Since $\Gamma(z_1, z_2)$ is same as the $J=J'=0$ case, 
$\var[T(\varphi)]$ is same as the one in \cite{BY2005}, and we obtain the result. 
We note that it was further shown in \cite{BY2005} that
\beq \begin{split}
	\covar [ T(\varphi_1), T(\varphi_2) ]
	&= (w_2 -2) \tau_1(\varphi_1) \tau_1(\varphi_2) + (W_4 -3) \tau_2(\varphi_1) \tau_2(\varphi_2) + 2 \sum_{\ell=1}^{\infty} \ell \tau_{\ell}(\varphi_1) \tau_{\ell}(\varphi_2).
\end{split} \eeq

Now let us consider $\E [T(\varphi)]$. 
Recall that (see \eqref{Chebyshev formula}), for $\ell=0,1,2,\dots$, 
\beq
	\tau_\ell(\varphi)
	= \frac1{2\pi} \int_{-\pi}^\pi \varphi(2\cos\theta) \cos(\ell\theta) \, \dd \theta
	= \frac{(-1)^{\ell}}{2\pi \ii} \oint_{|s|=1} \varphi \left(-s-\frac{1}{s} \right) s^{\ell-1} \, \dd s
\eeq
where we set $s=-e^{\ii \theta}$ for the second equality.

We change the variable $z$ to $s=s(z)$ in the first integral in \eqref{eq:Eintb}. 
Note that \eqref{sstism} implies that $s+1/s=-z$ and the map $z\mapsto s$ maps $\C\setminus [-2, 2]$ to the disk $|s|<1$.
Then $\Gamma$ is mapped to a contour with negative orientation that contains $0$ and lies in the slit disk $\Omega:=\{|s|<1\} \setminus [-1, -1/J]$. 
Changing the orientation of the contour, we obtain
\beq \begin{split} \label{Tphi}
	\E[T(\varphi)]&= \frac{1}{2\pi \ii} \oint \varphi \left(-s-\frac{1}{s} \right) \left[ -J' + \frac{J^2 s}{1 + Js} + (w_2-1)s + \frac{s^3}{1-s^2} + \left( W_4 - 3 \right) s^3 \right] \dd s
\end{split} \eeq
along a contour with positive orientation  that contains $0$ and lies in the slit disk $\Omega:=\{|s|<1\} \setminus [-1, -1/J]$.
Note that $\varphi \left(-s-\frac{1}{s} \right)$ is analytic in a neighborhood of the boundary of $\Omega$. 

The first, third, and fifth terms in the integrand of \eqref{Tphi} are, using analyticity, equal to 
\beq \begin{split} \label{mean1}
&\frac{1}{2\pi \ii} \oint_{|s|=1} \varphi \left(-s-\frac{1}{s} \right) \left[ -J' + (w_2-1)s  + \left( W_4 - 3 \right) s^3 \right] \dd s \\
&=J' \tau_1(\varphi) + (w_2-1) \tau_2(\varphi) + \left( W_4 - 3 \right) \tau_4(\varphi).
\end{split} \eeq

For the fourth term in the integrand of \eqref{Tphi}, when we deform the contour to the unit circle, 
then the two poles $s=-1$ and $s=1$ on the circle yields the half of the residue terms and the integral becomes the principal value. The principal value integral is, after setting $-s=e^{\ii\theta}$,  
\beq \begin{split} \label{residue00}
	P.V. \frac{1}{2\pi} \int_{-\pi}^{\pi} \varphi(2\cos \theta) \frac{e^{4\ii \theta}}{1-e^{2\ii \theta}} \dd \theta  
= \frac{1}{2\pi} \int_{-\pi}^{\pi} \varphi(2\cos \theta)\left(-\frac{1}{2} - \cos 2\theta \right) \dd \theta = -\frac{1}{2} \tau_0(\varphi) - \tau_2(\varphi). 
\end{split} \eeq
Hence we obtain 
\beq \begin{split} \label{residue}
	&\frac{1}{2\pi \ii} \oint_{|s|=r} \varphi \left(-s-\frac{1}{s} \right)  \frac{s^3}{1-s^2} \dd s 
	= \frac14 \left( \varphi(-2)+ \varphi(2) \right) -\frac{1}{2} \tau_0(\varphi) - \tau_2(\varphi) .
\end{split} \eeq
From \eqref{Tphi}, \eqref{mean1}, and \eqref{residue}, we proved that $\E [T(\varphi)]=  M(\varphi)$.

\end{proof}

\section{Outline of the proof of Proposition \ref{prop:gaussian xi}} \label{sec:outline}

Sections \ref{sec:outline}--\ref{sec:miscellanies} are dedicated to proving Proposition \ref{prop:gaussian xi}. 
From Theorem 8.1 of \cite{Billingsley_conv}, we need to show (a) the finite-dimensional convergence of $\xi_N(z)$ to a Gaussian vector and (b) the tightness of $\xi_N(z)$. 
To establish the part (a), we compute the limits of the mean $\E [\xi_N(z)]$ and the covariance $\covar [ \xi_N(z_1), \xi_N(z_2)]$ in Section \ref{sec:mean} and \ref{sec:covariance}, respectively. 
The part (a) is concluded in Section \ref{sub:martingale CLT}.
The part (b) is proved in Section \ref{sub:tightness}.

We use the following known results for the resolvent and large deviation estimates. 

\subsection{Local semicircle law and large deviation estimates} \label{sec:prelim}

The Green function (resolvent) of $M$ is $R(z) = (M - zI)^{-1}$.
The normalized trace of the Green function is defined as 
\beq
s_N(z) = \frac{1}{N} \Tr R(z) = \frac{1}{N} \sum_{i=1}^N \frac{1}{\mu_i - z},
\eeq
which is also the Stieltjes transform of $\rho_N$.
Recall that 
\beq
	\xi_N (z) 
	= N \int_{\R} \frac1{x-z} (\rho_N - \rho)(\dd x)
	= N \left( s_N(z) - s(z) \right).
\eeq
We also set 
\beq
	\zeta_N (z) := \xi_N (z) - \E \xi_N (z) = N \left( s_N(z) - \E s_N(z) \right).
\eeq

\begin{lem}[Theorem 2.9 of \cite{EKYY1}, local semicircle law] \label{lem:local law}
Let $\Sigma \geq 3$ be a fixed but arbitrary constant and define the domain $D = \{ z = E + \ii \eta \in \C : |E| \leq \Sigma, \eta \in (0, 3) \}$. Set $\kappa = \min\{ |E-2|, |E+2| \}$. Then, for any $z \in D$ with $\im z = \eta$,
\beq
	|s_N(z) - s(z)| \prec \min \left\{ \frac{1}{N\sqrt{\kappa + \eta}}, \frac{1}{\sqrt N} \right\} + \frac{1}{N\eta}
\eeq
and
\beq \label{eq:Rdelo}
	\max_{i, j} |R_{ij}(z) - \delta_{ij} s(z)| \prec \frac{1}{\sqrt N} + \sqrt{\frac{\im s(z)}{N\eta}} + \frac{1}{N\eta}.
\eeq
\end{lem}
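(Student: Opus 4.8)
The lemma is, in essence, Theorem~2.9 of \cite{EKYY1} applied to the mean–zero part of $M$: the limiting measure of $M$ is the semicircle law irrespective of $J,J'$, and the only point is that the local law survives the rank‑one perturbation produced by the non‑zero mean. The plan is to make this reduction explicit. Write $\mathbf e=(1,\dots,1)^T\in\R^N$ and decompose
\beq
	M = H + \frac{J'-J}{N}\,I + \frac{J}{N}\,\mathbf e\mathbf e^T,
	\qquad H_{ij}=\frac{A_{ij}}{\sqrt N},
\eeq
so that $H$ is a genuine mean‑zero (generalized) Wigner matrix, with off‑diagonal variance $N^{-1}$ and diagonal variance $w_2N^{-1}$; up to an $O(N^{-1})$ normalization of the variance profile this lies in the class covered by \cite{EKYY1}. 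Applying Theorem~2.9 of \cite{EKYY1} to $H$ yields $|s_N^H(z)-s(z)|\prec \min\{(N\sqrt{\kappa+\eta})^{-1},N^{-1/2}\}+(N\eta)^{-1}$ and $\max_{ij}|R^H_{ij}(z)-\delta_{ij}s(z)|\prec\Psi(z)$ on $D$, where $\Psi(z):=N^{-1/2}+\sqrt{\im s(z)/(N\eta)}+(N\eta)^{-1}$. I will also invoke the isotropic local law of \cite{KY2013_iso}: for deterministic unit vectors $\mathbf v,\mathbf w$ one has $\langle\mathbf v,R^H(z)\mathbf w\rangle-s(z)\langle\mathbf v,\mathbf w\rangle\prec\Psi(z)$, and in particular $\tfrac1N\mathbf e^T R^H(z)\mathbf e-s(z)\prec\Psi(z)$ and $(R^H(z)\mathbf e)_i-s(z)\prec\sqrt N\,\Psi(z)$.

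Next I would transfer these bounds to $M$ in two steps. First, the diagonal shift: with $c=(J'-J)/N=O(N^{-1})$, the matrix $M'':=H+\tfrac{J'-J}{N}I$ satisfies $R^{M''}(z)=R^H(z-c)$, and the resolvent identity together with the Ward identity $\|R(z)\mathbf e_i\|^2=\eta^{-1}\im R_{ii}(z)$ gives $|R^{M''}_{ij}(z)-R^H_{ij}(z)|\prec |c|\,\eta^{-1}\im s(z)\lesssim \im s(z)/(N\eta)$ and $|s(z-c)-s(z)|\lesssim\im s(z)/(N\eta)$, both within the stated errors; so the estimates of Step~1 hold verbatim for $M''$. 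Second, the rank‑one update: by Sherman--Morrison,
\beq
	R^M(z)=R^{M''}(z)-\frac{(J/N)\,R^{M''}(z)\mathbf e\mathbf e^T R^{M''}(z)}{1+(J/N)\,\mathbf e^T R^{M''}(z)\mathbf e}.
\eeq
Here $(J/N)\mathbf e^T R^{M''}(z)\mathbf e = Js(z)+O(\Psi)$ (with $O(\cdot)$ understood as $\prec$), so the denominator equals $1+Js(z)+O(\Psi)$; granting the bound $|1+Js(z)|\gtrsim\eta$ on $D$ (discussed below), the denominator is $\asymp 1+Js(z)$ with high probability since $N\eta\gg\Psi^{-1}$ is false in the relevant range — more precisely since the $O(\Psi)$ fluctuation is negligible against $\eta$ there. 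Using $(R^{M''}(z)\mathbf e)_i=s(z)+O(\sqrt N\Psi)$, the numerator entries are $\prec J/N$ (up to the $\Psi$‑factors), and a short case analysis over the bulk, the edge, and—for $J>1$—the outlier regime, using $|1+Js(z)|\gtrsim\eta$ and the matching rate $\im s(z)\asymp\eta$ near $\widehat J$, shows that the correction to $R^{M''}_{ij}$ is $\prec (N\eta)^{-1}$; this proves the entrywise bound for $M$. For the trace, either take the trace of the identity above and bound $|\mathbf e^T(R^{M''})^2\mathbf e|\le\|R^{M''}\mathbf e\|^2=\eta^{-1}\im(\mathbf e^T R^{M''}\mathbf e)=\eta^{-1}(N\im s(z)+O(N\Psi))$, so that the correction to $s_N^{M''}$ is $\prec\im s(z)/(N\eta|1+Js(z)|)\prec (N\eta)^{-1}$; or, more cheaply, use the deterministic inequality $|s_N^M(z)-s_N^{M''}(z)|\le \pi/(N\eta)$ coming from eigenvalue interlacing for rank‑one perturbations. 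Combined with the $M''$ estimate this yields the first display.

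\textbf{The main obstacle} is the uniform lower bound $|1+Js(z)|\gtrsim\eta$ on $D$, which separates the two regimes of $J$. For $J\le1$ one has $|s(z)|\le1$ throughout $D$ with strict inequality off $[-2,2]$, and a short estimate near $\pm2$ using $s(z)+1\asymp\sqrt{z-2}$ shows $|1+Js(z)|$ is bounded below by a constant times $\min(1,\sqrt{|z\mp2|})\gtrsim\eta$. For $J>1$ the function $1+Js(\cdot)$ does vanish, but only at the outlier location: $s(\widehat J)=-1/J$, and one computes $s'(\widehat J)=(J^2-1)^{-1}\ne0$, so a first‑order expansion gives $|1+Js(z)|\gtrsim|z-\widehat J|\ge\eta$ near $\widehat J$ and $\gtrsim1$ away from it. The delicate point is precisely that at $z=\widehat J$ the quantity $\im s(z)$ is also of order $\eta$, which is what keeps the Sherman--Morrison corrections of size $(N\eta)^{-1}$ rather than larger, and that the $O(\Psi)$ fluctuation of the denominator is genuinely smaller than this $\asymp\eta$ lower bound with high probability; verifying these matching rates near $z=\widehat J$ is the only nonroutine part, everything else being standard resolvent and Ward‑identity bookkeeping.
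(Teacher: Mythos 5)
You should first note that the paper does not prove this lemma at all: it is imported verbatim as Theorem 2.9 of \cite{EKYY1}, and that theorem is already formulated for matrices of exactly the present structure, namely a mean-zero Wigner part plus a multiple of $\mathbf{e}\mathbf{e}^T/N$ (this is precisely how the adjacency matrix of an Erd\H{o}s--R\'enyi graph is written there), so no reduction from the spiked to the unspiked case is required. Your plan --- rederiving the spiked local law from the unspiked one via a Sherman--Morrison update plus the isotropic law of \cite{KY2013_iso} --- is therefore a genuinely different route, and much of the bookkeeping (the decomposition of $M$, the diagonal shift, the rank-one resolvent identity, the interlacing bound for the trace) is fine.

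The gap is in the step you yourself single out as the delicate one. For $J>1$ and $z$ near $\widehat J$ you assert that the Sherman--Morrison denominator $1+\frac{J}{N}\mathbf{e}^TR^{M''}(z)\mathbf{e}=1+Js(z)+E(z)$, with $E(z)\prec\Psi(z)$, is comparable to $1+Js(z)$ because ``the $O(\Psi)$ fluctuation is negligible against $\eta$''. This is false in the relevant window: $\Psi\geq N^{-1/2}$ always, while your deterministic lower bound is only $|1+Js(z)|\gtrsim|z-\widehat J|\geq\eta$, so for $\eta\lesssim N^{-1/2}$ the fluctuation dominates the deterministic size. Nor is this an artifact of a lossy bound: the vanishing of the denominator is exactly the eigenvalue equation for the outlier $\mu_1$ of $M$, which sits at distance of order $N^{-1/2}$ from $\widehat J$; at $z=\mu_1+\ii\eta$ with small $\eta$ the denominator has modulus of order $\eta$, not of order $|1+Js(z)|\sim N^{-1/2}$, so ``denominator $\asymp 1+Js(z)$ with high probability'' fails precisely where the lemma is delicate. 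The conclusion can still be reached, but by a different mechanism: bound the denominator from below by its imaginary part, $\im\left(1+\frac{J}{N}\mathbf{e}^TR^{M''}\mathbf{e}\right)=\frac{J\eta}{N}\|R^{M''}\mathbf{e}\|^2\geq c\,\eta$ (using that the spectrum of $M''$ is bounded with high probability), and pair this with the flatness of the numerator, $(R^{M''}\mathbf{e})_i=s+\mathrm{error}$ with error $\prec\sqrt{N}\,\Psi$, to get a correction of size $\prec (N\eta)^{-1}$. Even then, both this flatness and your isotropic inputs degrade when $\eta$ is so small that $\Psi\gtrsim 1$, so the regime of very small $\eta$ (and, for $J=1$, the edge window where $|1+s(z)|\asymp\sqrt{\kappa+\eta}$ is comparable to $\Psi$) needs a separate argument --- improved isotropic estimates away from the bulk or monotonicity in $\eta$ --- before the stated bounds hold on all of $D$. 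As written, the proposal does not close this.
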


For $\eta \sim 1$, we have the following corollary.

\begin{cor} \label{cor:local law}
Let $\Sigma \geq 3$ be a fixed but arbitrary constant. For a fixed (small) constant $c>0$, define $D_c = \{ z = E + \ii \eta \in \C : |E| \leq \Sigma, \eta \in (c, 3) \}$. Then, for any $z \in D_c$,
\beq \label{sn_s}
	|s_N(z) - s(z)| \prec N^{-1}
\eeq
and
\beq \label{Rbasicestim}
	|R_{ii}(z) - s(z)| \prec N^{-\frac{1}{2}}, \qquad |R_{ij}(z)| \prec N^{-\frac{1}{2}} \quad (i \neq j).
\eeq
Moreover, \eqref{sn_s} holds for any $z \in \Gamma_r \cup \Gamma_l \cup \Gamma_0$ and \eqref{Rbasicestim} holds for any $z \in \Gamma_r \cup \Gamma_l$.
\end{cor}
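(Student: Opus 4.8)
The plan is to derive Corollary \ref{cor:local law} from Lemma \ref{lem:local law} (the local semicircle law of \cite{EKYY1}) essentially by specializing the spectral parameter to the regime $\im z \sim 1$ and then checking that the resulting bounds extend to the portions of the contour $\Gamma$ that lie at distance $\gtrsim 1$ from the spectrum. First I would observe that on $D_c = \{ z = E + \ii \eta : |E| \le \Sigma, \, \eta \in (c,3) \}$ we have $\eta \ge c$, so $\kappa + \eta \ge c$ and $N\eta \ge cN$; plugging this into the first display of Lemma \ref{lem:local law} gives $|s_N(z) - s(z)| \prec \min\{ (N\sqrt{\kappa+\eta})^{-1}, N^{-1/2} \} + (N\eta)^{-1} \prec N^{-1}$, which is \eqref{sn_s}. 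For \eqref{Rbasicestim} I would use \eqref{eq:Rdelo}: since $\im s(z)$ is bounded (indeed $|s(z)| \le c^{-1}$ when $\im z \ge c$, directly from \eqref{sstism} or from $|s(z)| \le 1/\dist(z,[-2,2])$), the right side of \eqref{eq:Rdelo} is $\prec N^{-1/2} + (N\eta)^{-1/2}\cdot O(1) + (N\eta)^{-1} \prec N^{-1/2}$ uniformly on $D_c$; this gives $|R_{ij}(z) - \delta_{ij} s(z)| \prec N^{-1/2}$, hence both bounds in \eqref{Rbasicestim}.

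The remaining point is the ``Moreover'' sentence: that \eqref{sn_s} holds on $\Gamma_r \cup \Gamma_l \cup \Gamma_0$ and \eqref{Rbasicestim} on $\Gamma_r \cup \Gamma_l$. Here I would note that every point of these contours has real part equal to $a_-$ or $a_+$, which are fixed constants with $a_- < -2$ and $a_+ > \widehat J$. Since $\widehat J \le 2$ always (as $J + J^{-1} \ge 2$ when... wait, actually $\widehat J = J + J^{-1} \ge 2$ for $J > 1$, so $\widehat J \ge 2$), in all cases $\dist(a_\pm, [-2,2]) \ge \min\{ |a_- + 2|, |a_+ - 2| \} =: c_0 > 0$, an $N$-independent constant. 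Thus on $\Gamma_r \cup \Gamma_l \cup \Gamma_0$ we have $\kappa = \min\{|E-2|,|E+2|\} \ge c_0$, so $\kappa + \eta \ge c_0$ regardless of how small $|\eta|$ is (on $\Gamma_0$, $|\eta|$ may be as small as $0$). Feeding $\kappa \ge c_0$ into the first display of Lemma \ref{lem:local law}: the first term satisfies $\frac{1}{N\sqrt{\kappa+\eta}} \le \frac{1}{N\sqrt{c_0}} \prec N^{-1}$, but the additive term $(N\eta)^{-1}$ is problematic on $\Gamma_0$ where $\eta$ can be tiny, down to $N^{-\delta}$. This is the main obstacle, and I expect it is resolved by recalling that $\Gamma_0$ has $|\eta| < N^{-\delta}$ with $\delta > 0$ small, so $(N\eta)^{-1}$ need not be $O(N^{-1})$ there — meaning \eqref{sn_s} as literally stated with $\prec N^{-1}$ cannot follow from Lemma \ref{lem:local law} alone on $\Gamma_0$. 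The resolution is that away from the spectrum one has the stronger deterministic-type control: for $z$ with $\dist(z,[-2,2]) \ge c_0$, $s_N(z) - s(z)$ is an analytic function of $z$ bounded by $\|(M-z)^{-1}\| \cdot$(trace fluctuation), and in fact the local law at $\im z = c_0/2 \sim 1$ combined with the Cauchy integral formula / analyticity of $s_N - s$ in the $c_0/2$-neighborhood of $\{a_\pm\}$ propagates the $\prec N^{-1}$ bound to all of $\Gamma_r \cup \Gamma_l \cup \Gamma_0$. So the clean argument is: (1) apply Corollary's own $D_c$-case at a fixed height $\eta_0 = c_0/2$; (2) note $s_N(z) - s(z)$ is analytic on $\{ \dist(z,[-2,2]) > c_0/2 \}$; (3) use a fixed-radius Cauchy estimate centered at points of $\Gamma_r, \Gamma_l, \Gamma_0$ (all within distance $c_0/2$ of a point where the bound is known) to transfer $\prec N^{-1}$; and similarly $R_{ij}(z)$ is analytic off the spectrum, giving \eqref{Rbasicestim} on $\Gamma_r \cup \Gamma_l$ (one does not claim it on $\Gamma_0$). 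I would write the Cauchy-estimate step carefully since the probabilistic $\prec$ must be handled with a union bound over a fixed finite net of base points — routine, since $\prec$ respects finite unions.

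In summary, the proof is: restrict Lemma \ref{lem:local law} to $\eta \sim 1$ for the $D_c$ statement; for the contour statement, use the $N$-independent separation of $\{a_-, a_+\}$ from $[-2,2]$ together with analyticity of $s_N - s$ and of the resolvent entries off the spectrum, plus a fixed-radius Cauchy integral estimate and a finite union bound, to propagate the $\prec N^{-1}$ and $\prec N^{-1/2}$ bounds from a fixed height down to $\Gamma_0$ and along $\Gamma_r, \Gamma_l$. The only subtlety — and the one genuine obstacle — is that the naive direct substitution into Lemma \ref{lem:local law} fails on $\Gamma_0$ because of the $(N\eta)^{-1}$ error term, which is why the analyticity/Cauchy argument (rather than a pointwise application) is needed there.
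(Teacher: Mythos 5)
Your treatment of $z\in D_c$ is correct and is exactly the paper's (``straightforward since $\eta\sim 1$''). The genuine gap is in the ``Moreover'' part. You rightly identify that the $(N\eta)^{-1}$ error in Lemma \ref{lem:local law} blocks a direct application on $\Gamma_0$, but the fix you propose --- propagating the bound by analyticity via a ``fixed-radius Cauchy estimate centered at points of $\Gamma_0$'' --- does not close. Cauchy's formula (or the maximum modulus principle) controls $s_N(z_0)-s(z_0)$ only in terms of its values on an entire contour \emph{enclosing} $z_0$; knowing the bound at a single nearby point at height $\eta_0=c_0/2$ gives nothing, and any closed contour around a point $z_0\in\Gamma_0$ with $|\im z_0|<N^{-\delta}$ necessarily passes through points of arbitrarily small imaginary part, which is precisely where the $D_c$ bound is unavailable. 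Nor does a two-constants/propagation-of-smallness argument help: the a priori bound on the relevant disk is only $N|s_N-s|\le CN$, and interpolating this with the $\prec 1$ bound on $\{|\im z|\ge c_0/2\}$ loses a positive power of $N$. The paper's actual argument for \eqref{sn_s} on $\Gamma_r\cup\Gamma_l\cup\Gamma_0$ is eigenvalue rigidity, which you never invoke: since every such $z$ is at distance of order one from all eigenvalues and all classical locations (Lemma \ref{lem:largest eigenvalue} handles the possible outlier $\mu_1$), one writes
\[
\Bigl|\frac1N\sum_j\frac1{\mu_j-z}-\frac1N\sum_j\frac1{\gamma_j-z}\Bigr|\le \frac{C}{N}\sum_j|\mu_j-\gamma_j|\prec\frac1N\sum_j \hat j^{-1/3}N^{-2/3}\prec N^{-1}
\]
by \eqref{eq:rigidity}, and the sum over the classical locations approximates $\int(x-z)^{-1}\rho(\dd x)$ to $O(N^{-1})$ by a Riemann-sum estimate. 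Some input of this type (rigidity or an equivalent) is unavoidable here; analyticity alone cannot upgrade the local law down to the real axis.

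A secondary point: for \eqref{Rbasicestim} on $\Gamma_r\cup\Gamma_l$ you route the proof through the same flawed analyticity step, whereas no detour is needed --- there $\eta\ge N^{-\delta}$, so \eqref{eq:Rdelo} applies directly. But note that your estimate $\im s(z)=O(1)$ would only give $\sqrt{\im s(z)/(N\eta)}\le (N\eta)^{-1/2}\le N^{-1/2+\delta/2}$, which is not $\prec N^{-1/2}$ for fixed $\delta>0$. The paper uses the sharper fact that $\im s(z)\sim\eta/\sqrt{\kappa+\eta}\sim\eta$ when $\kappa\sim 1$, so that $\sqrt{\im s(z)/(N\eta)}\sim N^{-1/2}$, together with $(N\eta)^{-1}\le N^{-1+\delta}\le N^{-1/2}$.
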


\begin{proof}
The bounds for $z \in D_c$ are straightforward since $\eta \sim 1$. For $z \in \Gamma_r \cup \Gamma_l \cup \Gamma_0$, from Lemma \ref{lem:rigidity} and Lemma \ref{lem:largest eigenvalue},
\beq \label{eq:SnSforrbasp} \begin{split}
	|s_N(z) - s(z)| &= \left| \frac{1}{N} \sum_{j=1}^N \frac{1}{\mu_j -z} - \int_{\R} \frac{\rho(\dd x)}{x-z} \right| = \left| \frac{1}{N} \sum_{j=1}^N \frac{1}{\gamma_j -z} - \int_{\R} \frac{\rho(\dd x)}{x-z} \right| + \caO(N^{-1}) \\
&= \caO(N^{-1}).
\end{split} \eeq
To prove \eqref{Rbasicestim} for $z \in \Gamma_r$, we notice that
\beq
	\sqrt{\frac{\im s(z)}{N\eta}} \sim \sqrt{\frac{\eta}{\sqrt{\kappa + \eta}} \frac{1}{N\eta}} \sim \sqrt{\frac{1}{N}}
\eeq
since $\kappa \sim 1$ and $N^{-\delta}\le \eta \le v_0$. Since $\frac{1}{N\eta} \le N^{-1+\delta}$, from \eqref{eq:Rdelo}, we find that \eqref{Rbasicestim} holds for $z \in \Gamma_r$. The proof of \eqref{Rbasicestim} for $z \in \Gamma_r$ is the same.
\end{proof}

Let $M^{(a)}$ be the minor of $M$ obtained by removing the $a$-th row and the $a$-th column. We denote by $R^{(a)}$ and $s_N^{(a)}$ the Green function and the averaged Green function of $M^{(a)}$, respectively. It is well known that
\beq \label{schur}
R_{ii} = \frac{1}{M_{ii} - z - \sum_{p, q}^{(i)} M_{ip} R^{(i)}_{pq} M_{qi}}, \qquad R_{ij} = -R_{ii} \sum_p^{(i)} M_{ip} R^{(i)}_{pj} \quad (i \neq j),
\eeq
and
\beq \label{RRdiff}
R_{ij} - R^{(a)}_{ij} = \frac{R_{ia} R_{aj}}{R_{aa}}.
\eeq
Here, $(i)$ in the summation notation means that the index $p = 1, 2, \dots, N$ with $p \neq i$. From the second identity in \eqref{schur}, we also have an estimate
\beq \label{sumofMRest}
\left| \sum_p^{(i)} M_{ip} R^{(i)}_{pj} \right| = \left| \frac{R_{ij}}{R_{ii}} \right| \prec N^{-\frac{1}{2}}
\eeq
for $i \neq j$.

\bigskip

We will also frequently use the following large deviation estimates, which will be proved in Section \ref{subsec:largdeves}.

\begin{lem} \label{lem:M deviation}
Let $S$ be an $(N-1) \times (N-1)$ matrix independent of $M_{ia}$ $(1 \leq a \leq N, a \neq i)$ with matrix norm $\|S\|$. Then, for $n = 1, 2$, there exists a constant $C_n$ depending only on $J$ and $W_4$ in Definition \ref{def:Wigner} and Condition \ref{cond:nonzero} such that
\beq \label{eq:large expectation}
\E \left| \sum_{p, q}^{(i)} M_{ip} S_{pq} M_{qi} - \frac{1}{N} \sum_p^{(i)} S_{pp} \right|^{2n} \leq \frac{C_n \|S\|^{2n}}{N^{n}}.
\eeq
Moreover,
\beq \label{eq:large general}
\left| \sum_{p, q}^{(i)} M_{ip} S_{pq} M_{qi} - \frac{1}{N} \sum_p^{(i)} S_{pp} \right| \prec \frac{\|S\|}{\sqrt N}.
\eeq
\end{lem}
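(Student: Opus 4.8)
The plan is to treat the quadratic form $Q_i := \sum_{p,q}^{(i)} M_{ip} S_{pq} M_{qi}$ as a perturbation of a mean-zero quadratic form in the centered variables and then apply the classical Hanson--Wright-type moment estimate. First I would write $M_{ip} = N^{-1/2} A_{ip} + J N^{-1}$ for $p \neq i$ (and the diagonal term $M_{ii}$ does not appear in the sum), so that $M_{ip} = X_{ip} + \mu_p$ with $X_{ip} := N^{-1/2} A_{ip}$ mean-zero, independent over $p$, having $\E X_{ip}^2 = N^{-1}$, $\E X_{ip}^4 = W_4 N^{-2} + O(N^{-5/2})$, and all moments bounded by $C_k N^{-k/2}$, while $\mu_p = J N^{-1}$ is deterministic. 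Expanding $Q_i$ bilinearly gives four pieces:
\beq
Q_i = \sum_{p,q}^{(i)} X_{ip} S_{pq} X_{qi} + \sum_{p,q}^{(i)} X_{ip} S_{pq} \mu_q + \sum_{p,q}^{(i)} \mu_p S_{pq} X_{qi} + \sum_{p,q}^{(i)} \mu_p S_{pq} \mu_q.
\eeq
The last term is deterministic of size $O(J^2 N^{-2} \sum_{p,q} |S_{pq}|) = O(\|S\| N^{-1})$ (using $\sum_{p,q}|S_{pq}| \le (N-1)\|S\|$), which is consistent with the claimed bound; the two cross terms are linear forms $\sum_p X_{ip} u_p$ with $u_p = N^{-1}J \sum_q S_{pq}$ (or its transpose), hence $\|u\|^2 \le J^2 N^{-2} \cdot N \|S\|^2 = O(\|S\|^2 N^{-1})$, and a standard Marcinkiewicz--Zygmund / Rosenthal inequality for sums of independent mean-zero terms gives $\E|\sum_p X_{ip} u_p|^{2n} \le C_n \|u\|_{\ell^2(X)}^{2n} \lesssim C_n \|S\|^{2n} N^{-2n} \le C_n \|S\|^{2n} N^{-n}$ for $n \le 2$.

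The heart of the matter is the first, genuinely quadratic, piece. I would further split it into its diagonal and off-diagonal parts: $\sum_p S_{pp} X_{ip}^2$ and $\sum_{p \neq q} S_{pq} X_{ip} X_{qi}$. For the diagonal part, $\sum_p S_{pp}(X_{ip}^2 - N^{-1}) + N^{-1}\sum_p^{(i)} S_{pp}$; the centered sum $\sum_p S_{pp}(X_{ip}^2 - \E X_{ip}^2)$ is a sum of independent mean-zero variables each bounded in $L^{2n}$ by $C\|S\|N^{-1}$, so again Rosenthal gives an $L^{2n}$ bound of order $C_n\|S\|^{2n}(N \cdot N^{-2})^n = C_n\|S\|^{2n}N^{-n}$; and the leftover $N^{-1}\sum_p^{(i)} S_{pp}$ is exactly the subtracted term on the left side of \eqref{eq:large expectation}, so it cancels. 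For the off-diagonal part $\sum_{p\neq q} S_{pq} X_{ip} X_{qi}$, I would compute the $2n$-th moment by expanding into $\E \prod_{k=1}^{2n} X_{i p_k} X_{i q_k}$ and using independence: a nonzero contribution requires every index value among $\{p_1,q_1,\dots,p_{2n},q_{2n}\}$ to appear at least twice, which for $n \le 2$ is a short finite case analysis (pairings and one block of four), each type contributing at most $C_n \|S\|^{2n} N^{-2n} \cdot (\text{number of terms})$ with the number of free indices bounded so that the total is $O(C_n\|S\|^{2n}N^{-n})$; here one uses $|S_{pq}| \le \|S\|$ and $\sum_q |S_{pq}|^2 \le \|S\|^2$ (rows of $S$ have $\ell^2$-norm at most $\|S\|$). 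Summing the $L^{2n}$ bounds of all pieces via Minkowski gives \eqref{eq:large expectation}.

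Finally, \eqref{eq:large general} follows from \eqref{eq:large expectation} by Markov's inequality in the standard way: for any $\epsilon, D > 0$, choosing $n$ large — wait, the lemma only asserts \eqref{eq:large expectation} for $n=1,2$, so to get the $\prec$ statement I would instead invoke the general Hanson--Wright / large-deviation bound for quadratic forms in variables satisfying a uniform subexponential-type moment bound (which our all-moments-finite hypothesis supplies), e.g.\ the version in \cite{EKYY1}; alternatively prove \eqref{eq:large expectation} for all $n$ by the same pairing argument with $n$-dependent constants $C_n$, then apply Markov with $n = n(\epsilon, D)$ chosen so that $N^{-n\epsilon} \cdot N^{\epsilon \cdot 2n}\cdots$ beats $N^{-D}$. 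I expect the main obstacle to be the bookkeeping in the off-diagonal quadratic term — controlling the combinatorics of index coincidences while extracting the correct power of $N$ and the correct power of $\|S\|$ simultaneously — since one must use both the entrywise bound $|S_{pq}| \le \|S\|$ and the row-$\ell^2$ bound, and carelessly using only one of them loses a factor of $N$.
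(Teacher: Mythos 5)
Your proposal is essentially the paper's own proof: the same four-way bilinear decomposition of $\sum_{p,q}^{(i)} M_{ip}S_{pq}M_{qi}$ into the centered quadratic form, two linear cross terms, and a deterministic term, with direct moment computation for \eqref{eq:large expectation} and a Hanson--Wright-type large deviation lemma (the paper uses Lemma 8.1--8.2 of \cite{EYY_bulk}) for \eqref{eq:large general}. Two small corrections: the inequality $\sum_{p,q}|S_{pq}|\le (N-1)\|S\|$ is false in general (entrywise absolute values can inflate the operator norm); what you actually need is $\bigl|\sum_{p,q}^{(i)}S_{pq}\bigr|=|\langle S\mathbf{1},\mathbf{1}\rangle|\le (N-1)\|S\|$, or the cruder Cauchy--Schwarz bound $\le N^{1/2}\|S\|_{HS}\cdot N^{1/2}\le N^{3/2}\|S\|$, which still gives a contribution $O(\|S\|N^{-1/2})$ consistent with the lemma. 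Also, finiteness of all moments does \emph{not} imply the uniform subexponential decay hypothesis of the quoted large deviation lemma, so your fallback route --- prove \eqref{eq:large expectation} for all $n$ by the pairing argument and apply Markov with $n=n(\epsilon,D)$ --- is the one to take under the stated assumptions.
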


\section{The mean function} \label{sec:mean}

In this section, we assume that $z \in \caK \cup \Gamma_r \cup \Gamma_l$. The estimate for $z \in \Gamma_r \cup \Gamma_l$ will be used later in the proof of Lemma \ref{lem:Gamolrsm}.

Let
\beq
b_N(z) = \E [\xi_N (z) ]= N[ \E s_N(z) - s(z)].
\eeq
From \eqref{schur}, if we set 
\beq \begin{split} \label{Qidefnh}
 	Q_i:= -M_{ii} + \sum_{p, q}^{(i)} M_{ip} R^{(i)}_{pq} M_{qi},
\end{split} \eeq
we have 
\beq \begin{split} \label{schur expand}
 	R_{ii} 
	= \frac{1}{-z-Q_i} 
	&= \frac{1}{-z-s} + \frac{Q_i -s}{(-z-s)^2} +
\frac{\left( Q_i -s \right)^2}{(-z-s)^3} + O \left( \frac{|Q_i-s|^3}{|z+s|^4} \right) \\
	&= s + s^{2}(Q_i -s) + s^3\left( Q_i -s \right)^2 + O \left( |s|^4 |Q_i-s|^3 \right)
\end{split} \eeq
since $s = -1/(s+z)$. 
Using $R_{ii} = \frac{1}{-z-Q_i} $, we have
\beq \begin{split} \label{Qibasesmt}
 	Q_i-s = -\frac1{R_{ii}}-z-s= -\frac1{R_{ii}} +\frac1{s} = \caO(N^{-\frac{1}{2}}).
\end{split} \eeq
We thus find that
\beq \label{b_N}
	b_N = s^2 \sum_i \E (Q_i-s)+ s^3 \sum_i \E (Q_i-s)^2  + O(N^{-\frac{1}{2} + \epsilon}).
\eeq

\subsection{$ \sum_i \E (Q_i-s)$}
We first consider
\beq \label{b1} \begin{split}
	\sum_i \E (Q_i-s) 
	&= \sum_i \E \left[ -M_{ii} + \sum_{p, q}^{(i)} M_{ip} R^{(i)}_{pq} M_{qi} -s \right] \\
	&= -J' + \frac{J^2}{N^2} \E \sum_i \sum_{p, q}^{(i)} R^{(i)}_{pq} + \frac{1}{N} \E \sum_i \sum_p^{(i)} R_{pp}^{(i)} - Ns.
\end{split}
\eeq
Naive power counting shows that the second term is $O(N^{\frac{1}{2}+\epsilon})$ and the third term is $O(N^{\epsilon})$. 
We show that the second term is actually $O(1)$ and the third term is $Ns$ plus an $O(1)$ term.

\subsubsection{$ \frac{1}{N} \E \sum_i \sum_p^{(i)} R_{pp}^{(i)}$}

From \eqref{RRdiff} and \eqref{Rbasicestim}, 
\beq
	\sum_p^{(i)} (R_{pp}^{(i)} - R_{pp}) = - \sum_p^{(i)} \frac{R_{pi} R_{ip}}{R_{ii}} 
	= - \frac{1}{s} \sum_{p}^{(i)} R_{pi} R_{ip} + \caO(N^{-\frac{1}{2}}).
\eeq
This implies
\beq
	\sum_p^{(i)} R_{pp}^{(i)} 
	= \left( \sum_p R_{pp} \right) - R_{ii}   - \frac{1}{s} (R^2)_{pp} + \frac1{s} (R_{ii})^2
	 + \caO(N^{-\frac{1}{2}}),
\eeq
and hence
\beq
	\frac1{N} \sum_i \sum_p^{(i)} R_{pp}^{(i)} 
	= \frac{N-1}{N} \Tr (R) -  \frac{1}{s N} \Tr (R^2) + \frac1{s N } \sum_i (R_{ii})^2
	 + \caO(N^{-\frac{1}{2}}). 
\eeq
Note that by spectral decomposition,
\beq \label{s derivative}
	\frac1{N} \Tr R^2= \frac{1}{N} \sum_i \frac{1}{(\mu_i - z)^2} = \frac{\dd}{\dd z} s_N(z).
\eeq
Since $|s_N(z) - s(z)| \prec N^{-1}$, we find from Cauchy integral formula that $|s_N'(z) - s'(z)| \prec N^{-1+\delta}$. 
Hence, using \eqref{Rbasicestim}, 
\beq \begin{split}
	\frac1{N} \sum_i \sum_p^{(i)} R_{pp}^{(i)} 
	&= (N-1) s_N(z) - \frac{s_N'}{s}   + \frac1{s N } \sum_i (R_{ii})^2
	 + \caO(N^{-\frac{1}{2}}) \\
	 &= N s_N(z) - \frac{s'}{s} 
	 + \caO(N^{-\frac{1}{2}})
\end{split} 
\eeq
We therefore find that 
\beq \label{b13'}
	\frac{1}{N} \E \sum_i \sum_p^{(i)} R_{pp}^{(i)} = Ns + b_N  -\frac{s'}{s} + O(N^{-\frac{1}{2}+\epsilon}).
\eeq

\subsubsection{$\frac{J^2}{N^2} \E \sum_i \sum_{p, q}^{(i)} R^{(i)}_{pq}$}

The case when $p=q$ follows from \eqref{b13'}:
\beq  \label{J2N2Ebla}
	\frac{J^2}{N^2} \E \sum_i \sum_p^{(i)} R_{pp}^{(i)} = J^2 s + O(N^{-1+\epsilon})
\eeq
since a naive estimate shows $b_N=O(N^{\epsilon})$ from the definition.

We now consider the case when $p\neq q$. 
We start with a lemma. 
The strategy of the proof of this lemma is used several places in the paper. 

\begin{lem}\label{lem:unmatch1}
For $q\neq i$,
\beq \label{unmatching offdiagonal0}
	\frac{1}{N} \E \sum_{p}^{(i,q)} R_{pi} R_{iq}^{(p)} = O(N^{-\frac{3}{2} + \epsilon}).
\eeq
\end{lem}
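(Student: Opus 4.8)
The plan is to open up $R_{pi}$ with the Schur complement formula \eqref{schur}, $R_{pi} = -R_{pp}\sum_r^{(p)} M_{pr} R^{(p)}_{ri}$, so that
\[
	\sum_p^{(i,q)} R_{pi}\, R^{(p)}_{iq} = -\sum_p^{(i,q)} R_{pp}\, R^{(p)}_{iq}\sum_r^{(p)} M_{pr} R^{(p)}_{ri}.
\]
The point of this rewriting is that $R^{(p)}$, and therefore both $R^{(p)}_{iq}$ and $\sum_r^{(p)} R^{(p)}_{ri}$, is a function of $M^{(p)}$ only, hence independent of the entries $\{M_{pr}: r\neq p\}$ sitting in front of it. This independence, together with the fact that $R^{(p)}_{iq}$ is an \emph{off}-diagonal entry (recall $q\neq i$), is what produces the extra $N^{-1/2}$ over the naive count.

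First I would replace the diagonal factor $R_{pp}$ by its deterministic value $s=s(z)$. The resulting error is bounded term by term via $|R_{pp}-s|\prec N^{-1/2}$ and $|R^{(p)}_{iq}|\prec N^{-1/2}$ from Corollary \ref{cor:local law} (the latter applied to the minor $M^{(p)}$, itself a Wigner matrix with non-zero mean, using $i\neq q$), together with $\bigl|\sum_r^{(p)} M_{pr} R^{(p)}_{ri}\bigr| = |R_{pi}/R_{pp}| \prec N^{-1/2}$ from \eqref{schur}/\eqref{sumofMRest}; summing over the $O(N)$ values of $p$ and multiplying by $1/N$ produces an $O(N^{-3/2+\epsilon})$ contribution, which is within the claimed bound. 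For the remaining main term I would condition on $M^{(p)}$ and use $\E M_{pr}=JN^{-1}$ for $r\neq p$, which turns it into
\[
	-\frac{sJ}{N}\sum_p^{(i,q)} \E\Bigl[ R^{(p)}_{iq}\sum_r^{(p)} R^{(p)}_{ri}\Bigr] = -\frac{sJ}{N}\sum_p^{(i,q)} \E\bigl[ R^{(p)}_{iq}\,(R^{(p)}\mathbf{1})_i\bigr],
\]
where $\mathbf{1}$ is the all-ones vector on the appropriate index set.

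The crux is then the ``isotropic'' bound $(R^{(p)}\mathbf{1})_i=\sum_r^{(p)} R^{(p)}_{ri}\prec 1$. I would establish it by the same Schur-plus-large-deviation mechanism: \eqref{schur} gives $(R\mathbf{1})_i = R_{ii}\bigl(1-\sum_t^{(i)} M_{it}(R^{(i)}\mathbf{1})_t\bigr)$; splitting $M_{it}=JN^{-1}+A_{it}N^{-1/2}$, the mean part contributes $\tfrac{J}{N}\,\mathbf{1}^{\top}R^{(i)}\mathbf{1}$, while the fluctuation part is a linear form in the independent mean-zero variables $A_{it}$ with coefficient vector $R^{(i)}\mathbf{1}$, hence $\prec \|R^{(i)}\mathbf{1}\|/\sqrt N$ by the large deviation estimate (the linear analogue of Lemma \ref{lem:M deviation}). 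Since $z$ stays a fixed distance from the spectrum of $M^{(i)}$ on $\caK\cup\Gamma_r\cup\Gamma_l$ — the endpoints $a_\pm$ bracket $[-2,\widehat J\,]$, and the eigenvalues of the minor interlace those of $M$, so $\|R^{(i)}\|_{\mathrm{op}}=\caO(1)$ with high probability — we get $\|R^{(i)}\mathbf{1}\|\prec\sqrt N$ and $|\mathbf{1}^{\top}R^{(i)}\mathbf{1}|\le\|\mathbf{1}\|\,\|R^{(i)}\mathbf{1}\|\prec N$, whence $(R\mathbf{1})_i\prec 1$; the same argument applies verbatim to $M^{(p)}$.

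Feeding $(R^{(p)}\mathbf{1})_i\prec 1$ and $|R^{(p)}_{iq}|\prec N^{-1/2}$ into the main term, it contributes $\tfrac{1}{N}\cdot\tfrac{|s|J}{N}\cdot O(N)\cdot N^{-1/2}=O(N^{-3/2+\epsilon})$ to $\tfrac{1}{N}\E\sum_p^{(i,q)}R_{pi}R^{(p)}_{iq}$, matching the error term, which proves the lemma. The main obstacle I anticipate is precisely the bound $(R^{(p)}\mathbf{1})_i\prec 1$: because the mean of $M$ is the rank-one matrix $\tfrac{J}{N}\mathbf{1}\mathbf{1}^{\top}$, which perturbs exactly the $\mathbf{1}$-direction, one has to check that the self-referential estimate for $\mathbf{1}^{\top}R^{(i)}\mathbf{1}$ actually closes at order $N$ rather than something larger, and this works only because $\|R^{(i)}\|_{\mathrm{op}}=\caO(1)$ is available on these contour pieces, where $z$ is kept uniformly away from both the semicircle support and the possible outlier at $\widehat J$. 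As usual, passing from the $\prec$-estimates to bounds on expectations is harmless, since every resolvent entry is bounded by a fixed power of $N$ deterministically while the exceptional events have probability at most $N^{-D}$ for arbitrary $D$.
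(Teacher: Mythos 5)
Your argument is correct, and it diverges from the paper's proof at the decisive step. Both proofs begin the same way: expand $R_{pi}$ by \eqref{schur}, replace $R_{pp}$ by $s$ at an admissible cost of $\caO(N^{-3/2})$ per term, and take the expectation over the independent row $\{M_{pr}\}$ to extract the factor $J/N$, arriving at $-\tfrac{Js}{N}\E\sum_r^{(p)}R^{(p)}_{ri}R^{(p)}_{iq}$ plus errors. The paper then does \emph{not} estimate this quantity directly; it removes the superscript $(p)$ at cost $\caO(N^{-1/2})$ inside the $r$-sum, sums over $p$, and recognizes the result as $-Js$ times the original quantity, so that $(1+Js)\cdot\tfrac1N\E\sum_p R_{pi}R^{(p)}_{iq}=O(N^{-3/2+\epsilon})$; the whole proof then reduces to the lower bound $|1+Js|\ge c'$ on $\caK\cup\Gamma_r\cup\Gamma_l$, which is verified by a case analysis ($J<1$ trivially, $\im s\sim 1$ on $\caK$, monotonicity of $\re s(a_++\ii\eta)$ on $\Gamma_r$, $\re s>0$ on $\Gamma_l$). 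You instead bound the inner sum directly, via $|R^{(p)}_{iq}|\prec N^{-1/2}$ times the row-sum estimate $\sum_r^{(p)}R^{(p)}_{ri}\prec 1$, and your derivation of the latter does close: one more Schur step plus the crude bounds $|\mathbf{1}^{\top}R^{(i)}\mathbf{1}|\le N\|R^{(i)}\|$ and $\|R^{(i)}\mathbf{1}\|\le\sqrt N\,\|R^{(i)}\|$, together with $\|R^{(i)}\|=O(1)$ with high probability on these contour pieces (and the usual polynomial a priori bound on the exceptional event), give $(R\mathbf{1})_i\prec 1$ with no self-consistency needed. The two hypotheses are essentially equivalent in disguise — the real zero of $1+Js(z)$ outside the bulk is exactly $z=\widehat J$, the location of the possible outlier, so $|1+Js|\ge c'$ and $\|R\|=O(1)$ encode the same separation of the contour from the spectrum — but your route trades the complex-analytic verification of $|1+Js|\ge c'$ for a reusable isotropic-type bound on $R\mathbf{1}$. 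Be aware that the paper reuses its fixed-point trick several more times (e.g.\ for \eqref{Rabrq distinct} and for the quantities $X_k$, $Y_k$ in Section \ref{sec:covariance}), and some of those instances genuinely need the resulting identity (the factor $(1-\tfrac{N-k}{N}s_1s_2)^{-1}$ in $Y_k$, for example, is a limit value, not just a bound), so your method substitutes for the paper's only where an upper bound suffices.
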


\begin{proof}
For distinct $p,q, i$, we have from \eqref{schur} and \eqref{sumofMRest} that
\beq \begin{split} \label{expRR unmatched}
	&R_{pi} R^{(p)}_{iq} = - R_{pp} \left( \sum_r^{(p)} M_{pr} R^{(p)}_{ri} \right) R^{(p)}_{iq}  
	= -s \left( \sum_r^{(p)} M_{pr} R^{(p)}_{ri} \right) R^{(p)}_{iq} +  \caO(N^{-\frac{3}{2} }) . 
\end{split} \eeq
Hence,
\beq \label{expRRtempo}
\begin{split}
	&\E \left[ R_{pi} R^{(p)}_{iq} \right] 
	= -\frac{Js}{N} \E  \sum_r^{(p)} R^{(p)}_{ri} R^{(p)}_{iq} + O(N^{-\frac{3}{2} + \epsilon}).
\end{split} \eeq
Using $R_{ab}^{(c)}-R_{ab} = \caO(N^{-1})$, which follows from  \eqref{RRdiff} and \eqref{Rbasicestim}, repeatedly, we find that for distinct $p, q, i$, 
\beq \label{expRRtempo0}
\begin{split}
	\sum_r^{(p)} R^{(p)}_{ri} R^{(p)}_{iq} 
	= \sum_r R_{ri} R_{iq}  + \caO(N^{-\frac{1}{2} })
	= \sum_{r}^{(i,q)}R_{ri} R_{iq}^{(r)}  + \caO(N^{-\frac{1}{2} }).
\end{split} \eeq
Summing \eqref{expRRtempo} over $p$, 
this implies that 
\beq \label{expRRtempo00}
\begin{split}
	&\frac1{N} \sum_{p}^{(i,q)} \E \left[ R_{pi} R^{(p)}_{iq} \right] 
	= -Js \frac{N-1}{N^2} \E  \sum_{r}^{(i,q)} R_{ri} R^{(r)}_{iq} + O(N^{-\frac{3}{2} + \epsilon}) .
\end{split} \eeq
Since the two sums on either side are the same, we obtain that
\beq \label{unmatching offdiagonal0'}
	\frac{1 + Js}{N} \E \sum_{p}^{(i,q)} R_{pi} R_{iq}^{(p)} = O(N^{-\frac{3}{2} + \epsilon}).
\eeq

We now claim that $|1+Js| > c'$ uniformly on $\caK \cup \Gamma_r \cup \Gamma_l$ for some ($N$-independent) constant $c' > 0$. Assuming the claim, it is obvious from \eqref{unmatching offdiagonal0'} that the desired lemma holds.

To prove the claim, we first note that, for $J<1$, the claim is trivial since $|1+Js| > 1- J|s| > 1-J$. Thus, we assume that $J>1$.

Let $z = E + \ii \eta$. It is straightforward to check that for $\im z>0$, 
\beq
	\im s(z) \geq C \sqrt{||E| -2| + \eta} \qquad \text{if} \quad |E| < 2 \quad\text{or} \quad ||E| -2| < \eta
\eeq
and
\beq
	\im s(z) \geq \frac{C \eta}{\sqrt{|E| -2| + \eta}} \qquad \text{if} \quad |E| \geq 2 \quad\text{and} \quad ||E| -2| \geq \eta
\eeq
for some $C>0$ independent of $z$. (See, e.g., Lemma 3.4 of \cite{EYY}.) Thus, $|1+Js| \geq |J| \im s \sim 1$, for $z \in \caK$.

Recall that $a_+ > \widehat J \geq 2$. From the definition of $s(z)$, it is direct to see that $s(a_+) > s(\widehat J) = -1/J$. Moreover,
\beq
	\re s(a_+ + \ii \eta) = \re \int_{-2}^2 \frac1{x-a_+-\ii \eta} \rho(\dd x) = \int_{-2}^2 \frac{x-a_+}{(x-a_+)^2 + \eta^2} \rho(\dd x),
\eeq
hence $\re s(a_+ + \ii \eta)$ is an increasing function of $\eta$. Thus, for $z \in \Gamma_u$, 
\beq
	|1+Js| > 1+ J \re s > 1+ Js(a_+) \sim 1.
\eeq

For $z \in \Gamma_l$, it is easy to see that $\re s > 0$, hence $|1+Js| \geq 1 + J \re s > 1$. This completes the proof of the lemma.
\end{proof}

From \eqref{RRdiff} and \eqref{Rbasicestim},
\beq\label{Ripqintermsofdiffrp}
	R^{(i)}_{pq} = R_{pq} - \frac{R_{pi} R_{iq}}{R_{ii}} = R_{pq} - \frac{R_{pi} R_{iq}}{s} + \caO(N^{-\frac{3}{2}}) = R_{pq} - \frac{R_{pi} R^{(p)}_{iq}}{s} + \caO(N^{-\frac{3}{2}}).
\eeq
Hence we conclude from \eqref{Ripqintermsofdiffrp} and \eqref{unmatching offdiagonal0} that
\beq \begin{split} \label{b121}
	\frac{J^2}{N^2} \E \sum_i \sum_{p\neq q}^{(i)} R^{(i)}_{pq} 
	&= \frac{J^2}{N^2} \E \sum_i \sum_{p\neq q}^{(i)} R_{pq} + O(N^{-\frac{1}{2} + \epsilon}) \\
	&= \frac{J^2}{N^2} \E \sum_i \sum_{p\neq q} R_{pq} + O(N^{-\frac{1}{2} + \epsilon}) 
= \frac{J^2}{N} \E \sum_{p\neq q} R_{pq} + O(N^{-\frac{1}{2} + \epsilon}).
\end{split} \eeq
We showed that the upper index $(i)$ after adding a negligible term.

\bigskip

We now compute the right hand side of \eqref{b121}. 
From \eqref{schur} and  \eqref{schur expand}, 
\beq \label{b122} \begin{split}
	R_{pq} 
	&= -R_{pp} \sum_r^{(p)} M_{pr} R^{(p)}_{rq} \\
	&= -s \sum_r^{(p)} M_{pr} R^{(p)}_{rq} - s^2 \left(Q_p -s \right) \sum_r^{(p)} M_{pr} R^{(p)}_{rq} + \caO(N^{-\frac{3}{2} + \epsilon}).
\end{split} \eeq
Taking expectation, the first term becomes, 
\beq \label{b1221}
	-s \E \sum_r^{(p)} M_{pr} R^{(p)}_{rq} 
	= -\frac{Js}{N} \E \sum_r^{(p)} R^{(p)}_{rq} .
\eeq

Since
\beq
	\E M_{pp} \sum_r^{(p)} M_{pr} R^{(p)}_{rq} 
	= \frac{J'J}{N^2}  \E  \sum_r^{(p)}  R^{(p)}_{rq} 
	= O(N^{-\frac{3}{2} + \epsilon}),
\eeq
the second term in \eqref{b122} satisfies, also using \eqref{b1221}, 
\beq \label{b122secondtermap}
	\E \left[ \left(Q_p -s \right) \sum_r^{(p)} M_{pr} R^{(p)}_{rq} \right] 
	= \E \sum_{a, b}^{(p)} M_{pa} R^{(p)}_{ab} M_{bp} \sum_r^{(p)} M_{pr} R^{(p)}_{rq} 
	-\frac{Js}{N} \E \sum_r^{(p)} R^{(p)}_{rq} + O(N^{-\frac{3}{2} + \epsilon}).
\eeq
We now evaluate the term
$$
	\E \sum_{a, b}^{(p)} M_{pa} R^{(p)}_{ab} M_{bp} \sum_r^{(p)} M_{pr} R^{(p)}_{rq},
$$
by considering different choices of the indices $a,b,r$ separately as follows. 
\begin{enumerate}[1)]
\item When $a, b, r$ are all distinct,
\beq \label{Ethreepartfica}
	\E \sum^{(p)} M_{pa} R^{(p)}_{ab} M_{bp} M_{pr} R^{(p)}_{rq}
	= \frac{J^3}{N^3} \sum^{(p)} \E \left[ R^{(p)}_{ab} R^{(p)}_{rq} \right],
\eeq
where the summation is over all distinct $a,b,r$. 
The part of the sum in which the index $r$ is equal to $q$ is  
\beq \label{Ethreepartfitcatemp}
	\frac{J^3}{N^3} \sum^{(p)}_{a\neq b} \E \left[ R^{(p)}_{ab} R^{(p)}_{qq} \right]
	= O(N^{-3/2+\epsilon})
\eeq
by naive estimate. Hence we assume that the index $r$ satisfies $r\neq q$. 
Now similar to Lemma~\ref{lem:unmatch1}, 
for distinct $a,b,r,q, p$,
\beq \begin{split}
	\E \left[ R^{(p)}_{ab} R^{(p)}_{rq} \right] 
	&= \E \left[ R_{ab} R^{(a)}_{rq} \right] + O(N^{-\frac{3}{2} + \epsilon})
	= \E \left[ -R_{aa} \sum_t^{(a)} M_{at} R^{(a)}_{tb} R^{(a)}_{rq} \right] + O(N^{-\frac{3}{2} + \epsilon}) \\
	&= -\frac{Js}{N} \E \sum_t^{(a)} R^{(a)}_{tb} R^{(a)}_{rq} + O(N^{-\frac{3}{2} + \epsilon}) = -\frac{Js}{N} \E \sum_t^{(p)} R^{(p)}_{tb} R^{(p)}_{rq} + O(N^{-\frac{3}{2} + \epsilon}) \\
	&= -\frac{Js}{N} \E \sum_{t: t \neq b, r, q}^{(p)} R^{(p)}_{tb} R^{(p)}_{rq} + O(N^{-\frac{3}{2} + \epsilon}).
\end{split} \eeq
Summing over $a$,
\beq
	\frac{1}{N} \E \sum_{a: a \neq b, r, q}^{(p)} R^{(p)}_{ab} R^{(p)}_{rq} 
	= -Js \frac{N-4}{N^2} \E \sum_{t: t \neq b, r, q}^{(p)} R^{(p)}_{tb} R^{(p)}_{rq} + O(N^{-\frac{3}{2} + \epsilon})
\eeq
for distinct $b,r, q, p$. 
Hence, after adding three $\caO(N^{-1/2})$ terms to the sum,
\beq \label{Rabrq distinct}
	\frac{1}{N} \E \sum_{a}^{(p)} R^{(p)}_{ab} R^{(p)}_{rq} = O(N^{-\frac{3}{2} + \epsilon})
\eeq
for distinct $b, r,q, p$. 
Using this, we find that \eqref{Ethreepartfica} with the summation over all distinct $a,b,r$ with $r\neq q$
is $O(N^{-\frac{3}{2} + \epsilon})$.
Since the case when $r=q$ has the same estimate in \eqref{Ethreepartfitcatemp}, we find that 
\beq\label{caseqofq1of}
	\E \left[ \sum^{(p)} M_{pa} R^{(p)}_{ab} M_{bp} M_{pr} R^{(p)}_{rq} \right] = O(N^{-\frac{3}{2} + \epsilon}),
\eeq
where the summation is over all distinct $a,b,r$.

\item When $a=b \neq r$,
\beq \begin{split} \label{Rabr distinct}
	&\E \sum_{a \neq r}^{(p)} M_{pa} R^{(p)}_{aa} M_{ap} M_{pr} R^{(p)}_{rq}
	= \frac{J}{N} \left(\frac1{N}+\frac{J^2}{N^2}\right) \sum_{a \neq r}^{(p)} \E \left[ R^{(p)}_{aa} R^{(p)}_{rq} \right]\\
	&\qquad = \frac{J}{N^2} \sum_{a, r}^{(p)} \E \left[ R^{(p)}_{aa} R^{(p)}_{rq} \right] + O(N^{-\frac{3}{2} + \epsilon}) 
	= \frac{J}{N} \E \left[ s_N^{(p)}  \sum_r^{(p)} R^{(p)}_{rq} \right]+ O(N^{-\frac{3}{2} + \epsilon})
\end{split} \eeq
where we define
\beq
	s_N^{(p)} = \frac1{N} \sum_a^{(p)} R_{aa}^{(p)}. 
\eeq

\item When $a=r \neq b$ (or $b=r \neq a$),
\beq \label{Rab distinct0}
	\E \sum_{a \neq b}^{(p)} M_{pa} R^{(p)}_{ab} M_{bp} M_{pa} R^{(p)}_{aq}
	= \frac{J}{N} \left(\frac1{N}+\frac{J^2}{N^2}\right)  \sum_{a \neq b}^{(p)} \E \left[ R^{(p)}_{ab} R^{(p)}_{aq} \right] .
\eeq
The part of the sum in which either $a=q$ or $b=q$ is $O(N^{-\frac{3}{2} + \epsilon})$ from naive estimate. 
Now for $a\neq q$, 
\beq \label{Rab distinct1}
	\frac{1}{N} \sum_{b}^{(p,a,q)} R^{(p)}_{ab} R^{(p)}_{aq} 
	= \frac{1}{N} \sum_{b}^{(p,a,q)} R_{ab} R^{(b)}_{aq} + \caO(N^{-3/2})
	= \frac{1}{N} \sum_{b}^{(a,q)} R_{ab} R^{(b)}_{aq} + \caO(N^{-3/2})
\eeq
Following the proof of \eqref{unmatching offdiagonal0}, we can check that $\E \left[ \frac{1}{N} \sum_{b}^{(a,q)} R^{(p)}_{ba} R^{(b)}_{aq} \right] = O(N^{-\frac{3}{2} + \epsilon})$. (This is easy to see for a real symmetric matrix since $R_{ab}=R_{ba}$.) Thus,
\beq \label{Rab distinct2}
	\E \sum_{a \neq b}^{(p)} M_{pa} R^{(p)}_{ab} M_{bp} M_{pa} R^{(p)}_{aq}
	= \frac{J}{N^2} \sum_{a}^{(p,q)} \E \left[ \sum_{b}^{(a,q)} R^{(p)}_{ba} R^{(b)}_{aq} \right] + O(N^{-\frac{3}{2} + \epsilon})
	= O(N^{-\frac{3}{2} + \epsilon}).
\eeq

\item When $a=b=r$,
\beq \label{Rabr same}
	\E \sum_r^{(p)} M_{pr} R^{(p)}_{rr} M_{rp} M_{pr} R^{(p)}_{rq}
	= \left( \frac{W_3}{N^{\frac{3}{2}}}+ \frac{J^3}{N^3} \right)  \E \sum_r^{(p)} R^{(p)}_{rr} R^{(p)}_{rq}
	= \frac{W_3 s}{N^{\frac{3}{2}}} \E \sum_r^{(p)} R^{(p)}_{rq} + O(N^{-\frac{3}{2} + \epsilon}).
\eeq
\end{enumerate}

Putting the above four cases into \eqref{b122secondtermap}, we find that
\beq \begin{split}
	\E \left[ \left( Q_p -s \right) \sum_r^{(p)} M_{pr} R^{(p)}_{rq} \right]
	&  = \frac{J}{N} \E \left[ \left( s_N^{(p)} - s \right) \sum_r^{(p)} R^{(p)}_{rq} \right] + \frac{W_3 s}{N^{\frac{3}{2}}} \E \sum_r^{(p)} R^{(p)}_{rq} + O(N^{-\frac{3}{2} + \epsilon}).
\end{split} \eeq
Note that 
\beq
	s_N^{(p)} - s = \frac1{N} \sum_{a}^{(p)} \left( R_{aa}^{(p)}-R_{aa}\right) - \frac1{N} R_{pp} +  (s_N -s)
	= \caO(N^{-1}).
\eeq
Hence,
\beq \label{b1222} \begin{split}
	\E \left[ \left( Q_p -s \right) \sum_r^{(p)} M_{pr} R^{(p)}_{rq} \right] 
	=  \frac{W_3 s}{N^{\frac{3}{2}}} \E \sum_r^{(p)} R^{(p)}_{rq} + O(N^{-\frac{3}{2} + \epsilon}) .
\end{split} \eeq

\bigskip

From \eqref{b122}, \eqref{b1221}, and \eqref{b1222}, for $p\neq q$, 
\beq \begin{split}
	\E  \left[ R_{pq}  \right]
	&= - \left( \frac{J s}{N} + \frac{W_3 s^3}{N^{\frac{3}{2}}} \right) \E \sum_r^{(p)} R^{(p)}_{rq}+ O(N^{-\frac{3}{2} +\epsilon}).
\end{split} \eeq
Using \eqref{Ripqintermsofdiffrp} and \eqref{unmatching offdiagonal0}, this implies that 
\beq \begin{split}
	\E  \left[ R_{pq}  \right]
	&= - \left( \frac{J s}{N} + \frac{W_3 s^3}{N^{\frac{3}{2}}} \right) \E \sum_r R_{rq}+ O(N^{-\frac{3}{2} + \epsilon}).
\end{split} \eeq
From this we find that 
\beq \begin{split}
	\frac{1}{N} \E \sum_p R_{pq} 
	&= \frac{1}{N} \E \left[ \sum_p^{(q)} R_{pq} + R_{qq} \right] 
	= - \left( \frac{J s}{N} +\frac{ W_3 s^3}{N^{\frac{3}{2}}} \right) \E \sum_r R_{rq} + \frac{s}{N} + O(N^{-\frac{3}{2} + \epsilon}),
\end{split} \eeq
which implies that 
\beq \label{R unmatched}
	\frac{1}{N} \E \sum_p R_{pq} = \frac{s}{(1 + Js) N} + O(N^{-\frac{3}{2} + \epsilon}).
\eeq
Therefore, we obtain
\beq \label{b12} \begin{split}
	\frac{J^2}{N} \E \sum_{p \neq q} R_{pq} 
	&= \frac{J^2}{N} \E \sum_{p, q} R_{pq} - \frac{J^2}{N} \E \sum_{p} R_{pp} 	 
	= \frac{J^2 s}{1 + Js} - J^2 s + O(N^{-\frac{1}{2}+ \epsilon}).
\end{split}\eeq

We obtain from \eqref{J2N2Ebla}, \eqref{b121}, and \eqref{b12} that
\beq \label{b1'222}
	\frac{J^2}{N^2} \E \sum_i \sum_{p, q}^{(i)} R^{(i)}_{pq}
	= \frac{J^2 s}{1 + Js}  + O(N^{-\frac{1}{2} + \epsilon}).
\eeq

\subsubsection{Conclusion for $\sum_i \E (Q_i-s)$}

From \eqref{b1}, \eqref{b13'}, and \eqref{b1'222}, 
\beq \label{b1'}
	\sum_i \E (Q_i-s)  
	= -J'  + b_N - \frac{s'}{s} + \frac{J^2 s}{1 + Js} + O(N^{-\frac{1}{2} + \epsilon}).
\eeq

\subsection{$\sum_i \E (Q_i-s)^2$}

We next turn to the second term in \eqref{b_N}. We begin with
\beq \begin{split} \label{b2}
	\E (Q_i-s)^2
	= & \frac{w_2}{N} + \frac{(J')^2}{N^2}+ \frac{2J' s}{N} + s^2 
	- 2 \left( s+ \frac{J'}{N} \right) \E \sum_{p, q}^{(i)} M_{ip} R^{(i)}_{pq} M_{qi}   \\
	& + \E \sum_{p, q, r, t}^{(i)} M_{ip} R^{(i)}_{pq} M_{qi} M_{ir} R^{(i)}_{rt} M_{ti}.
\end{split} \eeq
The first sum on the right hand side satisfies 
\beq \label{b22}
\E \sum_{p, q}^{(i)} M_{ip} R^{(i)}_{pq} M_{qi} = \frac{1}{N} \E \sum_p^{(i)} R^{(i)}_{pp} + \frac{J^2}{N^2} \E \sum_{p, q}^{(i)} R^{(i)}_{pq} = \E s_N^{(i)} + \frac{J^2 s}{(1 + Js)N} + O(N^{-\frac{3}{2} + \epsilon}),
\eeq
using \eqref{R unmatched} (applied to the Green function of an $(N-1) \times (N-1)$ matrix).

\bigskip

\subsubsection{Computation of $\E\big[ \sum_{p, q, r, t}^{(i)} M_{ip} R^{(i)}_{pq} M_{qi} M_{ir} R^{(i)}_{rt} M_{ti} \big]$}

In order to evaluate the last term in \eqref{b2},
we consider several cases separately.

\begin{enumerate}[1)]
\item When $p, q, r, t$ are all distinct,
$$
	\sum^{(i)} \E \left[ M_{ip} R^{(i)}_{pq} M_{qi} M_{ir} R^{(i)}_{rt} M_{ti} \right] 
	=  \frac{J^4}{N^4} \sum^{(i)} \E \left[ R^{(i)}_{pq} R^{(i)}_{rt} \right] 
	= O(N^{-\frac{3}{2} + \epsilon})
$$
due to \eqref{Rabrq distinct}. 
Here the sum is taken over all distinct $p,q,r,t$.

\item When $| \{p, q, r, t \}| = 3$:
\begin{enumerate}
\item If $p=q$,
$$
	\E \left[ M_{ip} R^{(i)}_{pp} M_{pi} M_{ir} R^{(i)}_{rt} M_{ti} \right] 
	= \frac{J^2}{N^2}\left( \frac1{N}+ \frac{J^2}{N^2}\right) \E \left[ R^{(i)}_{pp} R^{(i)}_{rt} \right] 
	= \frac{J^2}{N^3} \E \left[ R_{pp} R_{rt}^{(i)} \right] + O(N^{-\frac{9}{2} + \epsilon}).
$$
Thus, using \eqref{Rbasicestim} and \eqref{b12}, we find that 
\beq \begin{split}
	& \sum^{(i)} \E \left[ M_{ip} R^{(i)}_{pp} M_{pi} M_{ir} R^{(i)}_{rt} M_{ti} \right] 
	= \frac{J^2}{N^2}  \sum^{(i)}_{r \neq t}   \E \left[  s_N R_{rt}^{(i)} \right]  + O(N^{-\frac{3}{2} + \epsilon}) \\
	&\qquad =\frac{J^2 s}{N^2}  \sum^{(i)}_{r \neq t}   \E \left[  R_{rt}^{(i)} \right]  + O(N^{-\frac{3}{2} + \epsilon})
	= -\frac{J^3 s^3}{(1 + Js)N} + O(N^{-\frac{3}{2} + \epsilon}).
\end{split} \eeq
where the first sum is over all distinct $p,r,t$.

\item If $r=t$, the calculation is the same as the above.

\item Other cases have negligible contributions, i.e., bounded by $N^{-\frac{3}{2} + \epsilon}$, due to unmatching off-diagonal terms using \eqref{unmatching offdiagonal0} and the derivation is similar to that of \eqref{Rab distinct2}.

\end{enumerate}

\item When $| \{p, q, r, t \}| = 2$:

\begin{enumerate}

\item If there is a triplet, e.g., $p=q=r$, the contribution is $O(N^{-\frac{3}{2} + \epsilon})$. 
For example, 
\beq \begin{split}
	\E \sum_{p \neq t}^{(i)} M_{ip} R^{(i)}_{pp} M_{pi} M_{ip} R^{(i)}_{pt} M_{ti} 
	&= \frac{J}{N} \left( \frac{W_3}{N^{\frac{3}{2}}}+ \frac{J^3}{N^3} \right)  
	\E \sum_{p \neq t}^{(i)} R^{(i)}_{pp} R^{(i)}_{pt} \\
	&= \frac{W_3 Js}{N^{\frac{5}{2}}} \E \sum_{p \neq t}^{(i)} R^{(i)}_{pt} + O(N^{-\frac{3}{2} + \epsilon}) 
	= O(N^{-\frac{3}{2} + \epsilon}),
\end{split} \eeq
where we used \eqref{b12}.

\item If $p=q$ and $r=t$,  
\beq \begin{split}
	&\E \sum_{p \neq r}^{(i)} M_{ip} R^{(i)}_{pp} M_{pi} M_{ir} R^{(i)}_{rr} M_{ri} 
	= \left( \frac{1}{N} + \frac{J^2}{N^2} \right)^2 \E \sum_{p \neq r}^{(i)} R^{(i)}_{pp} R^{(i)}_{rr}\\
	&= \left( \frac{1}{N} + \frac{J^2}{N^2} \right)^2  \E \sum_p^{(i)} R_{pp}^{(i)} \left( Ns_N^{(i)}-R_{pp}^{(i)}\right)  
	= \E \left( s_N^{(i)} \right)^2 - \frac{s^2}{N} + \frac{2 J^2 s^2}{N} + O(N^{-\frac{3}{2} + \epsilon}).
\end{split} \eeq

\item If $p=t$ and $q=r$,
\beq \begin{split}
	&\E \sum_{p \neq q}^{(i)} M_{ip} R^{(i)}_{pq} M_{qi} M_{iq} R^{(i)}_{qp} M_{pi} 
	= \left( \frac{1}{N} + \frac{J^2}{N^2} \right)^2  \E \sum_{p \neq q}^{(i)} R^{(i)}_{pq} R^{(i)}_{qp} \\ 
	&= \left( \frac{1}{N} + \frac{J^2}{N^2} \right)^2 \left[  \E \Tr (R^{(i)})^2 - \E \sum_p^{(i)} ( R^{(i)}_{pp})^2 \right]
	= \frac{s'}{N} - \frac{s^2}{N} + O(N^{-\frac{3}{2} + \epsilon}),
\end{split} \eeq
where we used \eqref{s derivative} (applied to an $(N-1)\times (N-1)$ matrix).

\item If $p=r$ and $q=t$, the expectation $\E [M_{ip} R^{(i)}_{pq} M_{qi} M_{ip} R^{(i)}_{pq} M_{qi}]$ is negligible when $M$ is complex Hermitian. When $M$ is real symmetric, the calculation is the same as the above, since $R$ is also symmetric and the contribution is
\beq \begin{split}
\frac{s'}{N} - \frac{s^2}{N} + O(N^{-\frac{3}{2} + \epsilon}).
\end{split} \eeq

\end{enumerate}

\item When $p=q=r=t$,
\beq
\E \sum_p^{(i)} M_{ip} R^{(i)}_{pp} M_{pi} M_{ip} R^{(i)}_{pp} M_{pi} = \frac{W_4 s^2}{N} + O(N^{-\frac{3}{2} + \epsilon}).
\eeq

\end{enumerate}

Combining all cases together, we obtain
\beq \begin{split} \label{b26}
	&\E \sum_{p, q, r, t}^{(i)} M_{ip} R^{(i)}_{pq} M_{qi} M_{ir} R^{(i)}_{rt} M_{ti} \\
&= -\frac{2 J^3 s^3}{(1 + Js)N} + \E \left( s_N^{(i)} \right)^2 - \frac{s^2}{N} + \frac{2 J^2 s^2}{N} + \frac{2s'}{N} - \frac{2s^2}{N} + \frac{W_4 s^2}{N} + O(N^{-\frac{3}{2} + \epsilon})
\end{split} \eeq
(when $M$ is real symmetric. For complex Hermitian $M$, we have $\frac{s'}{N} - \frac{s^2}{N}$ instead of $\frac{2s'}{N} - \frac{2s^2}{N}$.)

\subsubsection{Conclusion for  $\sum_i \E \left( Q_i-s\right)^2$}

From \eqref{b2}, \eqref{b22}, and \eqref{b26},
\beq \begin{split} \label{b2'01}
	\E \left( Q_i-s\right)^2
	=& s^2+ \E \left( s_N^{(i)} \right)^2 - 2\left( s+ \frac{J'}{N} \right) \E s_N^{(i)} 
+ \frac{2J's}{N} \\
	&+ \frac{1}{N} \left( w_2 - 3 s^2 + 2s' + W_4 s^2 \right) + O(N^{-\frac{3}{2} + \epsilon}).
\end{split} \eeq
Using $|s_N^{(i)} - s| \prec N^{-1}$ and summing over $i$, we obtain 
\beq \begin{split} \label{b2'}
	\sum_i \E \left( Q_i-s\right)^2 
&= w_2 - 3s^2 + 2s' + W_4 s^2  + O(N^{-\frac{1}{2} + \epsilon}).
\end{split} \eeq

\subsection{Formula of $b_N$}

Inserting \eqref{b1'} and \eqref{b2'} into \eqref{b_N}, we obtain
\beq
b_N = -s^2 J' - s' s + b_N s^2 + \frac{J^2 s^3}{1 + Js} + w_2 s^3 + 2s' s^3 + \left( W_4 - 3 \right) s^5 + O(N^{-\frac{1}{2} + \epsilon}).
\eeq
Therefore, 
\beq
b_N = \frac{s^2}{1-s^2} \left( -J' - \frac{s'}{s} + \frac{J^2 s}{1 + Js} + w_2 s + 2s' s + \left( W_4 - 3 \right) s^3 \right) + O(N^{-\frac{1}{2} + \epsilon}).
\eeq
Using the algebraic identity $s' = \frac{s^2}{1-s^2}$, we can express
\beq
b_N = \frac{s^2}{1-s^2} \left( -J' + \frac{J^2 s}{1 + Js} + (w_2-1)s + s's + \left( W_4 - 3 \right) s^3 \right) + O(N^{-\frac{1}{2} + \epsilon}).
\eeq
This converges to $b(z)$ in Proposition \ref{prop:gaussian xi}. We remark that, when $J' = J = 0$, this reduces to
\beq
b = (1+s')s^3 \left( (w_2-1) + s' + \left( W_4 - 3 \right) s^2 \right),
\eeq
which is the same as Proposition 3.1 of \cite{BY2005}.

\section{The covariance function} \label{sec:covariance}

\subsection{Martingale decomposition}

Following \cite{BY2005}, we consider the filtration
\beq
\caF_k = \sigma( M_{ij}, \, k < i, j \leq N )
\eeq
for $k = 0, 1, \dots, N$ and the conditional expectation
\beq
\E_k( \cdot ) = \E ( \cdot | \caF_k ).
\eeq
Recall that
\beq
\zeta_N = \xi_N - \E \xi_N = \Tr R - \E \Tr R.
\eeq
We use the following martingale decomposition:
\beq \begin{split} \label{eq:zeta_N decomposition0}
	\zeta_N &= \sum_{k=1}^N (\E_{k-1} \Tr R - \E_k \Tr R) 
	= \sum_{k=1}^N (\E_{k-1} - \E_k) \Tr R = \sum_{k=1}^N (\E_{k-1} - \E_k) (\Tr R - \Tr R^{(k)}).
\end{split} \eeq
From \eqref{RRdiff} and \eqref{schur}, 
\beq \begin{split} \label{eq:zeta_N decomposition1}
	\Tr R - \Tr R^{(k)}
	= R_{kk} + \sum^{(k)}_i \frac{R_{ik} R_{ki}}{R_{kk}} 
	= R_{kk} + \sum_i^{(k)} R_{kk} \sum_{p, q}^{(k)} M_{kp} R^{(k)}_{pi} R^{(k)}_{iq} M_{qk}.
\end{split} \eeq
Hence 
\beq \begin{split} \label{eq:zeta_N decomposition}
	\zeta_N
	&= \sum_{k=1}^N (\E_{k-1} - \E_k) \left[ R_{kk} \left( 1 + \sum_{p, q}^{(k)} M_{kp} (R^{(k)})^2_{pq} M_{qk} \right) \right].
\end{split} \eeq

As in the previous section, we expand $R_{kk}$ using Schur formula. Since
\beq
\left| \sum_{p, q}^{(k)} M_{kp} (R^{(k)})^2_{pq} M_{qk} \right| \prec 1
\eeq
from \eqref{eq:large general}, 
it is tempting to speculate that one needs to expand $R_{kk}$ up to third order term, i.e., up to the term of order $N^{-1}$. However, for any random variables $X_k$ and $X_{\ell}$ with $k > \ell$ adapted to the filtration, 
\beq \begin{split}
&\E \left[ (\E_{k-1} - \E_k)X_k \cdot (\E_{\ell-1} - \E_{\ell}) \ol X_{\ell} \right] = \E \left[ \E_{k-1}[ (\E_{k-1} - \E_k)X_k \cdot (\E_{\ell-1} - \E_{\ell}) \ol X_{\ell}] \right] \\
&= \E \left[ (\E_{k-1} - \E_k)X_k \cdot \E_{k-1}[(\E_{\ell-1} - \E_{\ell}) \ol X_{\ell}] \right] = 0.
\end{split} \eeq
Thus,
\beq \label{eq:E_k sum change}
\E \left| \sum_{k=1}^N (\E_{k-1} - \E_k)X_k \right|^2 = \E \sum_{k=1}^N \left| (\E_{k-1} - \E_k)X_k \right|^2.
\eeq
This implies, in particular, that if a random variable $Y_k = \caO(N^{-1})$, then
\beq  \label{eq:E_k sum change2}
\sum_{k=1}^N (\E_{k-1} - \E_k) (X_k + Y_k) = \sum_{k=1}^N (\E_{k-1} - \E_k) X_k + \caO_p (N^{-\frac{1}{2}}),
\eeq
where $\caO_p (N^{-\frac{1}{2}})$ means that the other terms are bounded by $N^{-\frac{1}{2} + \epsilon}$ in probability. 
Applying the argument to the expansion \eqref{schur expand} of $R_{kk}$ in \eqref{eq:zeta_N decomposition}, we find that
\beq \begin{split} \label{zeta}
	\zeta_N &= s \sum_{k=1}^N (\E_{k-1} - \E_k) \left[ \left( 1 + \sum_{p, q}^{(k)} M_{kp} (R^{(k)})^2_{pq} M_{qk} \right) \right] \\
	&\qquad + s^2 \sum_{k=1}^N (\E_{k-1} - \E_k) \left[ \left(Q_k -s \right) \left( 1 + \sum_{p, q}^{(k)} M_{kp} (R^{(k)})^2_{pq} M_{qk} \right) \right] + \caO_p (N^{-\frac{1}{2}}).
\end{split} \eeq
where $Q_k= -M_{kk} + \sum_{r, t}^{(k)} M_{kr} R^{(k)}_{rt} M_{tk}$ as in \eqref{Qidefnh}. 

\subsubsection{First term}

The first term on the right hand side of \eqref{zeta} is given by
\beq \begin{split} \label{z1}
& (\E_{k-1} - \E_k) \sum_{p, q}^{(k)} M_{kp} (R^{(k)})^2_{pq} M_{qk} \\
&= \E_{k-1} \left[ \sum_{p, q}^{(k)} M_{kp} (R^{(k)})^2_{pq} M_{qk}  \right] -  \E_{k-1} \left[ \frac{J^2}{N^2} \sum_{p, q}^{(k)} (R^{(k)})^2_{pq} + \frac{1}{N} \sum_p^{(k)} (R^{(k)})^2_{pp} \right] \\
&=  \E_{k-1} \left[ \sum_{p, q}^{(k)} M_{kp} (R^{(k)})^2_{pq} M_{qk} - \frac{J^2}{N^2} \sum_{p, q}^{(k)} (R^{(k)})^2_{pq} -s' \right] + \caO(N^{-1}).
\end{split} \eeq
This corresponds to $b_k$ of \cite{BY2005}. 

\subsubsection{Second term}

In order to compute the second term in the right hand side of \eqref{zeta}, 
note that 
\beq
	|Q_k-s|  \left| \left( 1 + \sum_{p, q}^{(k)} M_{kp} (R^{(k)})^2_{pq} M_{qk} \right) - \left( 1 + \frac{1}{N} \sum_p^{(k)} (R^{(k)})^2_{pp} \right) \right| \prec \frac{1}{N}.
\eeq
from \eqref{Qibasesmt} and \eqref{eq:large general}, since $\| R^{(k)} \| \leq \frac{1}{\im z}$ and $z \in \caK$.
Thus, the summand in the second term is given by
\beq \begin{split} \label{EkEkqQ1k}
	&  (\E_{k-1} -\E_k ) \left[ (Q_k-s)  \left( 1 + \frac{1}{N} \sum_p^{(k)} (R^{(k)})^2_{pp} \right)\right] +  \caO(N^{-1}).
\end{split} \eeq
Now
\beq \begin{split} 
	& \E_k \left[ (Q_k-s)  \left( 1 + \frac{1}{N} \sum_p^{(k)} (R^{(k)})^2_{pp} \right)\right] \\
	&= \E_{k-1} \left[ \left( - \frac{J'}{N} + \frac{J^2}{N^2} \sum_{r, t}^{(k)} R^{(k)}_{rt} + \frac{1}{N} \sum_{r}^{(k)} R^{(k)}_{rr}-s \right) \left( 1 + \frac{1}{N} \sum_p^{(k)} (R^{(k)})^2_{pp} \right)\right] \\
	&= \E_{k-1} \left[ \left(  \frac{J^2}{N^2} \sum_{r, t}^{(k)} R^{(k)}_{rt}  \right) \left( 1 + \frac{1}{N} \sum_p^{(k)} (R^{(k)})^2_{pp} \right)\right] + \caO(N^{-1}).
\end{split} \eeq
Hence, \eqref{EkEkqQ1k} becomes
\beq \begin{split} \label{z2}
	\E_{k-1} \left( -M_{kk} + \sum_{r, t}^{(k)} M_{kr} R^{(k)}_{rt} M_{tk} - \frac{J^2}{N^2} \sum_{r, t}^{(k)} R^{(k)}_{rt} - s \right) (1 + s') + \caO(N^{-1}).
\end{split} \eeq

\subsubsection{Simplified formula of the martingale decomposition}

From \eqref{zeta}, \eqref{z1}, and \eqref{z2}, we find that 
\beq \label{phi_decompose}
	\zeta_N = \sum_{k=1}^N \E_{k-1} \phi_k + \caO_p(N^{-\frac{1}{2}}),
\eeq
where
\beq \begin{split} \label{def_phi}
	\phi_k &:= s \left( \sum_{p, q}^{(k)} M_{kp} (R^{(k)})^2_{pq} M_{qk} - \frac{J^2}{N^2} \sum_{p, q}^{(k)} (R^{(k)})^2_{pq} - s' \right) \\
&\qquad + s^2 (1+s') \left( -M_{kk} + \sum_{p, q}^{(k)} M_{kp} R^{(k)}_{pq} M_{qk} - \frac{J^2}{N^2} \sum_{p, q}^{(k)} R^{(k)}_{pq} - s \right).
\end{split} \eeq
Since $\frac{\dd}{\dd z} R^{(k)}= (R^{(k)})^2$ and $s'=s^2(1+s')$, this can also be written as 
\beq \label{phi_k}
	\phi_k = \frac{\partial}{\partial z} \left[ s \left( -M_{kk} + \sum_{p, q}^{(k)} M_{kp} R^{(k)}_{pq} M_{qk} - \frac{J^2}{N^2} \sum_{p, q}^{(k)} R^{(k)}_{pq} - s \right) \right].
\eeq
Note that $\phi_k \prec N^{-\frac{1}{2}}$.

\subsection{Covariance}

Let $z_1, z_2, \dots, z_p$ are $p$ distinct points in $\caK$. In order to prove the finite dimensional convergence of $\xi_N$, it suffices to show that the random vector $(\zeta_N (z_1), \zeta_N (z_2), \dots, \zeta_N (z_p))$ converges weakly to a $p$-dimensional mean-zero Gaussian distribution with the covariance matrix $\Gamma(z_i, z_j)$ defined in \eqref{eq:covariance xi}. To prove it, we use the martingale CLT for $\sum_k \E_{k-1} \phi_k$.

Let $z_1$ and $z_2$ be two distinct points in $\caK$.
Following \cite{BY2005}, we consider
\beq
	\Gamma_N (z_1, z_2) = \sum_{k=1}^N \E_k \left[ \E_{k-1} [\phi_k (z_1)] \cdot \E_{k-1} [\phi_k (z_2)] \right].
\eeq
For simplicity, we introduce the notations
\beq
	s_1 = s(z_1), \qquad s_2 = s(z_2).
\eeq
Let
\beq \begin{split} \label{tilde Gamma}
	\wt \Gamma_N (z_1, z_2) &= \sum_{k=1}^N \E_k \left[ \E_{k-1} \left[ -M_{kk} + \sum_{p, q}^{(k)} M_{kp} R^{(k)}_{pq}(z_1) M_{qk} - \frac{J^2}{N^2} \sum_{p, q}^{(k)} R^{(k)}_{pq}(z_1) - s_1 \right] \right.\\
&\left. \qquad\qquad\qquad \times \E_{k-1} \left[ -M_{kk} + \sum_{p, q}^{(k)} M_{kp} R^{(k)}_{pq}(z_2) M_{qk} - \frac{J^2}{N^2} \sum_{p, q}^{(k)} R^{(k)}_{pq}(z_2) - s_2 \right] \right]
\end{split} \eeq
so that 
\beq
	\Gamma_N (z_1, z_2) = \frac{\partial^2}{\partial z_1 \partial z_2} \left[ s_1 s_2 \wt \Gamma_N (z_1, z_2) \right].
\eeq

\subsubsection{The easy parts of $\wt \Gamma_N (z_1, z_2)$}

We now find the limit of $\wt \Gamma_N (z_1, z_2)$. 
In order to simplify notations, let us write
\beq
	\Aq_k(z):= \sum_{p, q}^{(k)} M_{kp} R^{(k)}_{pq}(z) M_{qk},
	\qquad
	\Aw_k(z):= \frac{J^2}{N^2} \sum_{p, q}^{(k)} R^{(k)}_{pq}(z).
\eeq
Then a summand in the formula of $\wt \Gamma_N (z_1, z_2)$ is 
\beq \begin{split} \label{wrGamsumd}
	\E_k \left[ \E_{k-1} \left[ -M_{kk} +\Aq_k(z_1)-\Aw_k(z_1) - s_1 \right] \cdot \E_{k-1} \left[ -M_{kk} + \Aq_k(z_2)-\Aw_k(z_2) - s_2 \right] \right] .
\end{split} \eeq

We first estimate $\Aq_k(z) - \Aw_k(z) - s(z)$. By definition,
\beq
	\Aq_k(z) - \Aw_k(z) = \sum_{p, q}^{(k)} A_{kp} R^{(k)}_{pq}(z) A_{qk} + \frac{J}{N} \sum_{p, q}^{(k)} A_{kp} R^{(k)}_{pq}(z) + \frac{J}{N} \sum_{p, q}^{(k)} R^{(k)}_{pq}(z) A_{qk}.
\eeq
Using Lemma \ref{lem:M deviation} with $J=0$ (or the second part and the third part of Lemma \ref{lem:large deviation}), we find that
\beq
	\left| \sum_{p, q}^{(k)} A_{kp} R^{(k)}_{pq}(z) A_{qk} - \frac{1}{N} \sum_p^{(k)} R^{(k)}_{pp}(z) \right| \prec \frac{\|R^{(k)}\|}{\sqrt N}.
\eeq
Moreover, from the first part of Lemma \ref{lem:large deviation},
\beq
	\frac{1}{N} \left| \sum_{p, q}^{(k)} A_{kp} R^{(k)}_{pq}(z) \right| \prec \frac{1}{N} \sum_q^{(k)} \left( \frac{1}{N} \sum_p^{(k)} |R^{(k)}_{pq}(z)|^2 \right)^{\frac{1}{2}} \leq \left( \sum_q^{(k)} \frac{1}{N} \sum_p^{(k)} |R^{(k)}_{pq}(z)|^2 \right)^{\frac{1}{2}} = \frac{\|R^{(k)}\|}{\sqrt N}.
\eeq
Since $\| R^{(k)} \| \leq \frac{1}{\im z}$ and $z \in \caK$, and $|s^{(k)}(z) - s(z)| \prec N^{-1}$, we obtain that
\beq
	|\Aq_k(z) - \Aw_k(z) - s(z)| \prec N^{-\frac{1}{2}}.
\eeq

Now we consider \eqref{wrGamsumd}. 
We note that
\beq \label{G1}
	\E_k \left( \E_{k-1} [M_{kk}] \right)^2 = \frac{w_2}{N} + O(N^{-2})
\eeq
and
\beq \label{G2}
	\E_k \left[ \E_{k-1} \left[ M_{kk} \right] \cdot \E_{k-1} \left[ \Aq_k(z_2)-\Aw_k(z_2) - s_2 \right] \right] = \caO(N^{-\frac{3}{2}}) .
\eeq
We also have
\beq \begin{split} \label{G3}
	&\E_k \left[ \E_{k-1} \left[ \Aq_k(z_1)\right] \cdot \E_{k-1} \left[ \Aw_k(z_2) + s_2 \right] \right] \\
	&= \sum_{p, q}^{(k)} \E[ M_{kp} M_{qk}] \cdot \E_k \left[ \E_{k-1} [ R^{(k)}_{pq}(z_1) ] \cdot \E_{k-1} \left[\frac{J^2}{N^2} \sum_{r, t}^{(k)} R^{(k)}_{rt}(z_2) + s_2 \right] \right] \\
&= \E_k \left[ \E_{k-1} \left[\frac{J^2}{N^2} \sum_{p, q}^{(k)} R^{(k)}_{pq}(z_1) + s^{(k)}_N (z_1) \right] \cdot \E_{k-1} \left[\frac{J^2}{N^2} \sum_{r, t}^{(k)} R^{(k)}_{rt}(z_2) + s_2 \right] \right] \\
	&=\E_k \left[ \E_{k-1} \left[ \Aw_k(z_1) + s^{(k)}_N (z_1) \right] \cdot \E_{k-1} \left[ \Aw_k(z_2) + s_2 \right] \right].
\end{split} \eeq
Similar estimates hold if $z_2$ in \eqref{G2} and \eqref{G3} is replaced by $z_1$.
Noting the similarity of the formula of \eqref{G3} with $\E_k \left[ \E_{k-1} \left[ \Aw_k(z_1) + s_1 \right] \cdot \E_{k-1} \left[ \Aw_k(z_2) + s_2 \right] \right]$, \eqref{wrGamsumd} becomes
\beq \begin{split} \label{wrGamsumd1}
	&\frac{w_2}{N} + \E_k \left[ \E_{k-1} \left[ \Aq_k(z_1) \right] \cdot \E_{k-1} \left[ \Aq_k(z_2) \right] \right]
	- \E_k \left[ \E_{k-1} \left[ \Aw_k(z_1) + s^{(k)}_N (z_1) \right] \cdot \E_{k-1} \left[ \Aw_k(z_2) + s^{(k)}_N (z_2) \right] \right] \\
	\\ &+ \E_k \left[ \E_{k-1} \left[ s_1-s^{(k)}_N (z_1) \right] \cdot \E_{k-1} \left[ s_2-s^{(k)}_N (z_2) \right] \right]
	+ \caO(N^{-\frac{3}{2}}).
\end{split} \eeq

\subsubsection{$\E_k \left[ \E_{k-1} \left[ \Aq_k(z_1) \right] \cdot \E_{k-1} \left[ \Aq_k(z_2) \right] \right]$} \label{subsub:S_k}

We compute
\beq \begin{split}
	&\E_k \left[ \E_{k-1} \left[ \Aq_k(z_1) \right] \cdot \E_{k-1} \left[ \Aq_k(z_2) \right] \right] \\
&= \E_k \left[ \E_{k-1} \left[ \sum_{p, q}^{(k)} \left( A_{kp} + \frac{J}{N} \right) R^{(k)}_{pq}(z_1) \left( A_{qk} + \frac{J}{N} \right) \right] \cdot \E_{k-1} \left[ \sum_{r, t}^{(k)} \left( A_{kr} + \frac{J}{N} \right) R^{(k)}_{rt}(z_2) \left( A_{tk} + \frac{J}{N} \right) \right] \right].
\end{split} \eeq
We rearrange it in descending order of $J$ and calculate the conditional expectations.

\begin{enumerate}[1)]

\item For $J^4$-terms, we get
$$
	\frac{J^4}{N^4} \E_k \left[ \E_{k-1} \left[  \sum_{p, q}^{(k)} R^{(k)}_{pq}(z_1) \right] \cdot \E_{k-1} \left[ \sum_{r, t}^{(k)} R^{(k)}_{rt}(z_2) \right] \right]
	= \E_k \left[ \E_{k-1} \left[ \Aw_k(z_1) \right] \cdot \E_{k-1} \left[ \Aw_k(z_2) \right] \right].
$$

\item For $J^3$-terms, the conditional expectation vanishes because it always contains a factor $\E[A_{k \cdot}]$ or $\E[A_{\cdot k}]$.

\item For $J^2$-terms, we get
\begin{equation*} \begin{split}
	&\frac{J^2}{N^2} \E_k \left[ \E_{k-1} \left[ s^{(k)}_N (z_1) \right] \cdot \E_{k-1} \left[ \sum_{r, t}^{(k)} R^{(k)}_{rt}(z_2) \right] 
	+  \E_{k-1} \left[  \sum_{p, q}^{(k)} R^{(k)}_{pq}(z_1) \right] \cdot \E_{k-1} \left[ s^{(k)}_N (z_2) \right] \right] \\
	& = \E_k \left[ \E_{k-1} \left[ s^{(k)}_N (z_1) \right] \cdot \E_{k-1} \left[ \Aw_k(z_2) \right] 
	+  \E_{k-1} \left[  \Aw_k(z_1) \right] \cdot \E_{k-1} \left[ s^{(k)}_N (z_2) \right] \right] .
\end{split} \end{equation*}
We also have other terms, but they are all negligible, i.e., of order $\caO(N^{-\frac{3}{2}})$. (After summing over $k$, the contribution from such terms will be $N^{-\frac{1}{2}}$.) For example,  consider 
\beq \begin{split}
	X_k&=\frac{J^2}{N^2} \E_k \left[ \E_{k-1} \left[ \sum_{p, q}^{(k)} R^{(k)}_{pq}(z_1) A_{qk} \right] \cdot \E_{k-1} \left[ \sum_{r, t}^{(k)} A_{kr} R^{(k)}_{rt}(z_2) \right] \right] \\
&= \frac{J^2}{N^3} \E_k \left[ \sum_{q:q>k}  \E_{k-1}  \left[\sum_p^{(k)} R^{(k)}_{pq}(z_1) \right] \cdot \E_{k-1} \left[ \sum_t^{(k)} R^{(k)}_{qt}(z_2) \right] \right].
\end{split} \eeq
By naive power counting, we see that $X_k = \caO(N^{-1})$. 
Since the contribution from the case $p=q$ is $\caO(N^{-\frac{3}{2}})$, we may assume that $p \neq q$. Expanding $R^{(k)}_{pq}$ by
\beq
R^{(k)}_{pq}(z_1) = -R^{(k)}_{pp}(z_1) \sum_a^{(k, p)} M_{pa} R^{(k, p)}_{aq}(z_1) = -s_1 \sum_a^{(k, p)} M_{pa} R^{(k, p)}_{aq}(z_1) + \caO(N^{-1}),
\eeq
we obtain that
\beq \begin{split}
	X_k &= \frac{-J^2 s_1}{N^3} \E_k \left[ \sum_{q:q>k} \E_{k-1} \left[ \sum_p^{(k)} \sum_a^{(k, p)} M_{pa} R^{(k, p)}_{aq}(z_1) \right] \cdot \E_{k-1} \left[ \sum_t^{(k)} R^{(k)}_{qt}(z_2)\right]  \right] + \caO(N^{-\frac{3}{2}}) \\
&= \frac{-J^3 s_1}{N^4} \E_k \left[ \sum_{q:q>k} \E_{k-1} \left[ \sum_p^{(k)} \sum_a^{(k, p)} R^{(k, p)}_{aq}(z_1)\right]  \cdot \E_{k-1} \left[ \sum_t^{(k)} R^{(k)}_{qt}(z_2) \right] \right] + \caO(N^{-\frac{3}{2}}) \\
&= \frac{-J^3 s_1}{N^4} \E_k \left[ \sum_{q:q>k} \E_{k-1} \left[ \sum_p^{(k)} \sum_a^{(k)} R^{(k)}_{aq}(z_1) \right] \cdot \E_{k-1} \left[ \sum_t^{(k)} R^{(k)}_{qt}(z_2) \right] \right] + \caO(N^{-\frac{3}{2}}) \\
&= -Js_1 X_k + \caO(N^{-\frac{3}{2}}).
\end{split} \eeq
Hence, $X_k = \caO(N^{-\frac{3}{2}})$, which is negligible.

\item The $J$-terms can be computed as in the previous case and find that the contribution is negligible, i.e., $\caO(N^{-\frac{3}{2}})$. Since the computation is similar to the previous case, we skip the proof. 

\item For the terms with no $J$, the conditional expectation vanishes unless $|\{ p, q, r, t \}| = 2$ or $p=q=r=t$.

	\begin{enumerate}
\item If $p=q \neq r=t$, we get
\beq \begin{split}
&\frac{1}{N^2} \E_k \left[ \sum_{p \neq r}^{(k)} \E_{k-1} \left[ R^{(k)}_{pp}(z_1) \right] \cdot \E_{k-1} \left[ R^{(k)}_{rr}(z_2) \right] \right] \\
&= \E_k \left[ \E_{k-1} \left[s^{(k)}_N (z_1) \right] \cdot \E_{k-1} \left[s^{(k)}_N (z_2) \right] \right] - \frac{s_1 s_2}{N} + \caO(N^{-2}).
\end{split} \eeq

\item If $p=q=r=t$, we get
\beq \begin{split}
&\E_k \left[ \sum_p^{(k)} \E_{k-1} \left[ A_{kp} R^{(k)}_{pp}(z_1) A_{pk} \right] \cdot \E_{k-1} \left[ A_{kp} R^{(k)}_{pp}(z_2) A_{pk} \right] \right]  \\
&= \sum_{p:p<k} \frac{s_1 s_2}{N^2} + \sum_{p:p>k} \frac{W_4 s_1 s_2}{N^2} + \caO(N^{-2}) = \frac{k}{N} \frac{s_1 s_2}{N} + \frac{N-k}{N} \frac{W_4 s_1 s_2}{N} + \caO(N^{-2}).
\end{split} \eeq

\item If $p=t \neq q=r$, we get
\beq \begin{split}
&\E_k \left[ \sum_{p \neq q}^{(k)} \E_{k-1} \left[ A_{kp} R^{(k)}_{pq}(z_1) A_{qk} \right] \cdot \E_{k-1} \left[ A_{kq} R^{(k)}_{qp}(z_2) A_{pk} \right] \right] \\
	&= \E_k \left[ \sum_{p, q: p, q > k, p \neq q} (A_{kp} A_{qk} )^2 \cdot \E_{k-1} \left[ R^{(k)}_{pq}(z_1) \right] \cdot \E_{k-1} \left[ R^{(k)}_{qp}(z_2)  \right] \right] \\
&= \frac{1}{N^2} \E_k \left[ \sum_{p, q: p, q > k, p \neq q} \E_{k-1} \left[ R^{(k)}_{pq}(z_1) \right] \cdot \E_{k-1} \left[ R^{(k)}_{qp}(z_2) \right] \right] =: Y_k.
\end{split} \eeq
We note that $Y_k = \caO(N^{-1})$. The idea in the estimate for $Y_k$ is similar to that for $X_k$, except that we expand both $R^{(k)}_{pq}(z_1)$ and $R^{(k)}_{qp}(z_2)$. Then,
\beq \begin{split}
Y_k &= \frac{s_1 s_2}{N^2} \E_k \left[ \sum_{p, q: p, q > k, p \neq q} \sum_{a, b}^{(k, p)} \E_{k-1} \left[ M_{pa} R^{(k, p)}_{aq}(z_1) \right] \cdot \E_{k-1} \left[ R^{(k, p)}_{qb}(z_2) M_{bp} \right] \right] + \caO(N^{-\frac{3}{2}}) \\
&= \frac{s_1 s_2}{N^3} \E_k \left[ \sum_{p, q: p, q > k, p \neq q} \sum_{a:a>k}^{(p)} \E_{k-1} \left[ R^{(k, p)}_{aq}(z_1) \right] \cdot \E_{k-1} \left[ R^{(k, p)}_{qa}(z_2) \right] \right] + \caO(N^{-\frac{3}{2}}) \\
&= \frac{s_1 s_2}{N^3} \E_k \left[ \sum_{p, q: p, q > k, p \neq q} \sum_{a:a>k} \E_{k-1} \left[ R^{(k)}_{aq}(z_1) \right] \cdot \E_{k-1} \left[ R^{(k)}_{qa}(z_2) \right] \right] + \caO(N^{-\frac{3}{2}}) \\
&= \frac{N-k-1}{N} \frac{s_1 s_2}{N^2} \E_k \left[ \sum_{a, q: a, q > k} \E_{k-1} \left[ R^{(k)}_{aq}(z_1) \right] \cdot \E_{k-1} \left[ R^{(k)}_{qa}(z_2) \right] \right] + \caO(N^{-\frac{3}{2}}).
\end{split} \eeq
Thus, writing the last sum for $a\neq q$ and $a=q$ separately, we find that
\beq
Y_k = \frac{N-k-1}{N} s_1 s_2 Y_k + \frac{(N-k-1)(N-k)}{N^3} (s_1 s_2)^2 + \caO(N^{-\frac{3}{2}}),
\eeq
and we obtain that
\beq
Y_k = \left( 1 - \frac{N-k}{N} s_1 s_2 \right)^{-1} \frac{(N-k)^2}{N^3} (s_1 s_2)^2 + \caO(N^{-\frac{3}{2}}).
\eeq

\item If $p=r \neq q=t$, the conditional expectation is the same as $Y_k$ in the previous case. (When $A$ is complex Hermitian, it vanishes.)

	\end{enumerate}

\end{enumerate}

Altogether, we obtain that
\beq \begin{split} \label{G4}	
	&\E_k \left[ \E_{k-1} \left[ \Aq_k(z_1) \right] \cdot \E_{k-1} \left[ \Aq_k(z_2) \right] \right] = \E_k \left[ \E_{k-1} \left[ \Aw_k(z_1) + s^{(k)}_N (z_1) \right] \cdot \E_{k-1} \left[ \Aw_k(z_2) + s^{(k)}_N (z_2) \right] \right] \\ 
	&- \frac{s_1 s_2}{N} + \frac{k}{N} \frac{s_1 s_2}{N} + \frac{N-k}{N} \frac{W_4 s_1 s_2}{N}  + 2\left( 1 - \frac{N-k}{N} s_1 s_2 \right)^{-1} \frac{(N-k)^2}{N^3} (s_1 s_2)^2 + \caO(N^{-\frac{3}{2}}).
\end{split} \eeq

\subsubsection{Conclusion for $\wt \Gamma_N (z_1, z_2)$ and $\Gamma_N (z_1, z_2)$}

Combining \eqref{wrGamsumd1} and \eqref{G4}, we find that \eqref{wrGamsumd} is equal to 
\beq \begin{split} \label{G45}	
	\frac{w_2}{N}- \frac{s_1 s_2}{N} + \frac{k}{N} \frac{s_1 s_2}{N} + \frac{N-k}{N} \frac{W_4 s_1 s_2}{N}  + 2\left( 1 - \frac{N-k}{N} s_1 s_2 \right)^{-1} \frac{(N-k)^2}{N^3} (s_1 s_2)^2 + \caO(N^{-\frac{3}{2}}).
\end{split} \eeq
Summing over $k$, we obtain from \eqref{tilde Gamma} that 
\beq \begin{split}
	\wt \Gamma_N (z_1, z_2) 
	&= w_2 - 1 + \frac{1}{2} \left(W_4 - 3 \right) s_1 s_2 - \frac{2 \log (1- s_1 s_2)}{s_1 s_2} + \caO(N^{-\frac{1}{2}}),
\end{split} \eeq
where we used
\beq
\int_0^1 \frac{a^2 x^2}{1- ax} \dd x = -\frac{a}{2} - 1 - \frac{\log (1-a)}{a}
\eeq
for $a\in \C\setminus [1, \infty)$. 
To check that $s_1 s_2 \in \C\setminus [1, \infty)$, we notice that $\im s_1(z), \im s_2(z) > 0$ for $z \in \caK$. If $(\re s_1)(\re s_2) > 0$, $\im (s_1 s_2) \neq 0$. If $(\re s_1)(\re s_2) \leq 0$, $\re (s_1 s_2) < 0$. Thus, in any case, $s_1 s_2 \in \C\setminus [1, \infty)$.
Therefore,
\beq \begin{split} \label{eq:covariance gamma_N}
\Gamma_N (z_1, z_2) &= \frac{\partial^2}{\partial z_1 \partial z_2} \left[ s_1 s_2 \wt \Gamma_N (z_1, z_2) \right] \\
&= s_1' s_2' \left( (w_2 - 1) + 2 \left(W_4 - 3 \right) s_1 s_2 + \frac{2}{(1-s_1 s_2)^2} \right) + \caO(N^{-\frac{1}{2}}),
\end{split} \eeq
which converges to $\Gamma(z_1, z_2)$ in probability.

\section{Proof of Proposition \ref{prop:gaussian xi}} \label{sec:miscellanies}

We conclude the proof of Proposition \ref{prop:gaussian xi} 
by establishing the (a) the finite-dimensional convergence to Gaussian vectors and (b) the tightness of $\xi_N(z)$, as discussed in Section \ref{sec:outline}.

\subsection{Finite-dimensional convergence} \label{sub:martingale CLT}

To prove the finite-dimensional convergence, we use Theorem 35.12 of \cite{Billingsley_prob_meas} for Martingale central limit theorem. 
Recall the definition of $\phi_k$ in \eqref{phi_decompose} and \eqref{def_phi}.
Since we already proved the convergence of the variance in the previous section, it suffices to check that
\beq
\sum_{k=1}^N \E \left[ |\E_{k-1} [\phi_k]|^2 \chi_{|\E_{k-1} [\phi_k]| \geq \epsilon} \right] \to 0
\eeq
for any ($N$-independent) $\epsilon > 0$, as $N \to \infty$. Since
\beq
	\E \left[ |\E_{k-1} [\phi_k]|^2 \chi_{|\E_{k-1} [\phi_k]| \geq \epsilon} \right] 
	\leq \frac{1}{\epsilon^2} \E \left[ |\E_{k-1} [\phi_k]|^4 \right],
\eeq
it is sufficient to prove that
\beq \label{Lyapunov}
\sum_{k=1}^N \E \left[ |\E_{k-1} [\phi_k]|^4 \right] \to 0
\eeq
as $N \to 0$, which is the Lyapunov condition in \cite{BY2005}. The Lypanov condition \eqref{Lyapunov} is obvious from the estimate $\phi_k \prec N^{-\frac{1}{2}}$, which was established in the previous section.

\subsection{Tightness of $(\zeta_N)$} 
\label{sub:tightness}

{Since $\xi_N(z)=\zeta_N(z)+ \E[\xi_N(z)]$ and the mean $\E[\xi_N(z)]$ converges, it is enough to check the tightness of the sequence $\zeta_N(z)$.}
From Theorem 12.3 of \cite{Billingsley_conv}, it suffices to show that $(\zeta_N(z))$ is tight for a fixed $z$ and the following H\"older condition as in \cite{BY2005}: for some ($N$-independent) constant $K >0$,
\beq \label{eq:Holder condition}
	\E|\zeta_N(z_1) - \zeta_N(z_2)|^2 \leq K|z_1  - z_2|^2, \qquad z_1, z_2 \in \caK.
\eeq

The fact that $(\zeta_N(z))$ is tight for a fixed $z$ is obvious from that the variance is bounded uniformly on $N$ as shown in \eqref{eq:covariance gamma_N}. 

We now check the H\"older condition. Note that since 
$R(z_1) - R(z_2) = (z_1 - z_2) R(z_1) R(z_2)$,
we have
\beq \begin{split} \label{holder1}
	&\E|\zeta_N(z_1) - \zeta_N(z_2)|^2 
	= |z_1 - z_2|^2 \E | \Tr R(z_1) R(z_2) - \E \Tr R(z_1) R(z_2)|^2 \\
	&\qquad = |z_1 - z_2|^2 \E \left| \sum_{k=1}^N (\E_{k-1} - \E_k) \left( \Tr R(z_1) R(z_2) - \Tr R^{(k)}(z_1) R^{(k)}(z_2) \right) \right|^2.
\end{split} \eeq
We follow the arguments in Section \ref{sec:covariance} to estimate the right hand side of \eqref{holder1}. When compared with \eqref{eq:zeta_N decomposition0}, the main difference is that we do not need to precisely find the leading order term as in the covariance computation in Section \ref{sec:covariance}.

For the ease of notation, we set
\beq
R \equiv R(z_1), \qquad S \equiv R(z_2).
\eeq
We will frequently use the estimate
\beq
\| R \|, \| S \|, \| R^{(k)} \|, \| S^{(k)} \| \leq C.
\eeq
for any $k = 1, 2, \dots, N$, uniformly for $z_1, z_2 \in \caK$. For $i, j \neq k$,
\beq \begin{split}
R_{ij} S_{ji} - R^{(k)}_{ij} S^{(k)}_{ji} &= \left( R_{ij} - R^{(k)}_{ij} \right) S^{(k)}_{ji} + R^{(k)}_{ij} \left( S_{ji} - S^{(k)}_{ji} \right) + \left( R_{ij} - R^{(k)}_{ij} \right) \left( S_{ji} - S^{(k)}_{ji} \right) \\
&= \frac{R_{ik} R_{kj}}{R_{kk}} S^{(k)}_{ji} + R^{(k)}_{ij} \frac{S_{jk}{S_{ki}}}{S_{kk}} + \frac{R_{ik} R_{kj}}{R_{kk}} \frac{S_{jk}{S_{ki}}}{S_{kk}}.
\end{split} \eeq
Thus, using \eqref{RRdiff}, 
\beq
	\Tr RS - \Tr \left( R^{(k)} S^{(k)} \right) = \sum_{i, j}^{(k)} \left( \frac{R_{ik} R_{kj}}{R_{kk}} S^{(k)}_{ji} + R^{(k)}_{ij} \frac{S_{jk}{S_{ki}}}{S_{kk}} + \frac{R_{ik} R_{kj}}{R_{kk}} \frac{S_{jk}{S_{ki}}}{S_{kk}} \right) + 2(RS)_{kk}
\eeq
and
\beq \begin{split} \label{eq:rs difference}
	&\E|\zeta_N(z_1) - \zeta_N(z_2)|^2 \\
	&= |z_1 - z_2|^2 \E \left| \sum_{k=1}^N (\E_{k-1} - \E_k) \sum_{i, j}^{(k)} \left( \frac{R_{ik} R_{kj}}{R_{kk}} S^{(k)}_{ji} + R^{(k)}_{ij} \frac{S_{jk}{S_{ki}}}{S_{kk}} + \frac{R_{ik} R_{kj}}{R_{kk}} \frac{S_{jk}{S_{ki}}}{S_{kk}} \right) + 2(RS)_{kk} \right|^2 \\
	&= |z_1 - z_2|^2 \E \sum_{k=1}^N \left| (\E_{k-1} - \E_k) \sum_{i, j}^{(k)} \left( \frac{R_{ik} R_{kj}}{R_{kk}} S^{(k)}_{ji} + R^{(k)}_{ij} \frac{S_{jk}{S_{ki}}}{S_{kk}} + \frac{R_{ik} R_{kj}}{R_{kk}} \frac{S_{jk}{S_{ki}}}{S_{kk}} \right) + 2(RS)_{kk} \right|^2,
\end{split} \eeq
where we used \eqref{eq:E_k sum change} to get the last line. 

To estimate the right hand side of \eqref{eq:rs difference}, we rewrite the first term in the summand as
\beq
	\sum_{i, j}^{(k)} \frac{R_{ik} R_{kj}}{R_{kk}} S^{(k)}_{ji} = \sum_{i, j}^{(k)} R_{kk} \sum_{p, q}^{(k)} M_{pk} R^{(k)}_{ip} R^{(k)}_{qj} M_{kq} S^{(k)}_{ji} = R_{kk} \sum_{p, q}^{(k)} M_{kq} \left( R^{(k)} S^{(k)} R^{(k)} \right)_{qp} M_{pk}.
\eeq
Since
\beq
	(\E_{k-1} - \E_k) \left[ \frac{s_N^{(k)} (z_1)}{N} \sum_p \left( R^{(k)} S^{(k)} R^{(k)} \right)_{pp} \right] = 0,
\eeq
we obtain
\beq \begin{split} \label{rs1}
	&\E \sum_{k=1}^N \left| (\E_{k-1} - \E_k) \sum_{i, j}^{(k)} \frac{R_{ik} R_{kj}}{R_{kk}} S^{(k)}_{ji} \right|^2 \\
	&= \E \sum_{k=1}^N \left| (\E_{k-1} - \E_k) \left[ R_{kk} \sum_{p, q}^{(k)} M_{kq} \left( R^{(k)} S^{(k)} R^{(k)} \right)_{qp} M_{pk} - \frac{s_N^{(k)} (z_1)}{N} \sum_p \left( R^{(k)} S^{(k)} R^{(k)} \right)_{pp} \right] \right|^2 \\
	&\leq 4 \sum_{k=1}^N \E \left| R_{kk} \sum_{p, q}^{(k)} M_{kq} \left( R^{(k)} S^{(k)} R^{(k)} \right)_{qp} M_{pk} - \frac{s_N^{(k)} (z_1)}{N} \sum_p \left( R^{(k)} S^{(k)} R^{(k)} \right)_{pp} \right|^2.
\end{split} \eeq
Using that $|R_{kk}| \leq \| R \| \leq C$, we get
\beq \begin{split} \label{rs11}
	&\E \left| R_{kk} \sum_{p, q}^{(k)} M_{kq} \left( R^{(k)} S^{(k)} R^{(k)} \right)_{qp} M_{pk} - \frac{R_{kk}}{N} \sum_p \left( R^{(k)} S^{(k)} R^{(k)} \right)_{pp} \right|^2 \\
	&\leq C \, \E \left| \sum_{p, q}^{(k)} M_{kq} \left( R^{(k)} S^{(k)} R^{(k)} \right)_{qp} M_{pk} - \frac{1}{N} \sum_p \left( R^{(k)} S^{(k)} R^{(k)} \right)_{pp} \right|^2 \leq \frac{C \| R^{(k)} S^{(k)} R^{(k)}\|^2}{N} \leq \frac{C}{N},
\end{split} \eeq
where we used Lemma \ref{lem:M deviation} to get the second inequality. Moreover, since
\beq
	\left| \frac{1}{N} \sum_p^{(k)} \left( R^{(k)} S^{(k)} R^{(k)} \right)_{pp} \right| \leq \left\| R^{(k)} S^{(k)} R^{(k)} \right\| \leq C,
\eeq
we also have that
\beq \label{rs12'}
	\E \left| \frac{R_{kk}}{N} \sum_p \left( R^{(k)} S^{(k)} R^{(k)} \right)_{pp} - \frac{s_N^{(k)} (z_1)}{N} \sum_p \left( R^{(k)} S^{(k)} R^{(k)} \right)_{pp} \right|^2 \leq C \, \E \left| R_{kk} - s_N^{(k)} (z_1) \right|^2.
\eeq
Recall that we defined $Q_k= -M_{kk} + \sum_{p, q}^{(k)} M_{kp} R^{(k)}_{pq} M_{qk}$. Applying \eqref{schur expand} to expand $R_{kk}$ and using Corollary \ref{cor:local law}, we find that
\beq \label{r-s expansion}
	R_{kk} - s_N^{(k)} (z_1) = s(z_1) - s_N^{(k)} (z_1) + s(z_1)^2 (Q_k -s(z_1)) + \caO(N^{-1}) = s(z_1)^2 (Q_k - s_N^{(k)} (z_1) ) + \caO(N^{-1}).
\eeq
Thus, from Lemma \ref{lem:M deviation},
\beq \label{rs12''}
	\E \left| R_{kk} - s_N^{(k)} (z_1) \right|^2 \leq C \, \E \left| -M_{kk} + \sum_{p, q}^{(k)} M_{kp} R^{(k)}_{pq} M_{qk} - \frac{1}{N} \sum_p^{(k)} R^{(k)}_{pp} \right|^2 \leq \frac{C}{N}
\eeq
hence, together with \eqref{rs12'}, we get
\beq \label{rs12}
	\E \left| \frac{R_{kk}}{N} \sum_p \left( R^{(k)} S^{(k)} R^{(k)} \right)_{pp} - \frac{s_N^{(k)} (z_1)}{N} \sum_p \left( R^{(k)} S^{(k)} R^{(k)} \right)_{pp} \right|^2 \leq \frac{C}{N}.
\eeq

Combining \eqref{rs11} and \eqref{rs12} with \eqref{rs1}, we find that
\beq \label{eq:rs1 bound}
	\E \sum_{k=1}^N \left| (\E_{k-1} - \E_k) \sum_{i, j}^{(k)} \frac{R_{ik} R_{kj}}{R_{kk}} S^{(k)}_{ji} \right|^2 \leq C.
\eeq
Similarly, we can also obtain a bound
\beq \label{eq:rs2 bound}
	\E \sum_{k=1}^N \left| (\E_{k-1} - \E_k) \sum_{i, j}^{(k)} R^{(k)}_{ij} \frac{S_{jk}{S_{ki}}}{S_{kk}} \right|^2 \leq C. 
\eeq

We expand the third term of the summand in \eqref{eq:rs difference} as
\beq \begin{split}
	\sum_{i, j}^{(k)} \frac{R_{ik} R_{kj}}{R_{kk}} \frac{S_{jk}{S_{ki}}}{S_{kk}} 
	&= R_{kk} S_{kk} \sum_{i, j}^{(k)} \sum_{p, q}^{(k)} M_{pk} R^{(k)}_{ip} R^{(k)}_{qj} M_{kq} \sum_{r, t}^{(k)} M_{rk} S^{(k)}_{jr} S^{(k)}_{ti} M_{kt} \\
	&= R_{kk} S_{kk} \sum_{t, p}^{(k)} M_{kt} \left( S^{(k)} R^{(k)} \right)_{tp} M_{pk} \sum_{q, r}^{(k)} M_{kq} \left( R^{(k)} S^{(k)} \right)_{qr} M_{rk} \\
	&= R_{kk} S_{kk} \left( \sum_{p, q}^{(k)} M_{kp} \left( R^{(k)} S^{(k)} \right)_{pq} M_{qk} \right)^2
\end{split} \eeq
since $R$ and $S$ commute. Following the decomposition idea we used in the proof of \eqref{eq:rs1 bound}, we first observe
\beq
	(\E_{k-1} - \E_k) \left[ s_N^{(k)} (z_1) s_N^{(k)} (z_2) \left( \frac{1}{N} \sum_p \left( R^{(k)} S^{(k)} \right)_{pp} \right)^2 \right] = 0.
\eeq
Thus,
\beq \begin{split}
	&\E \left| (\E_{k-1} - \E_k) \sum_{i, j}^{(k)} \frac{R_{ik} R_{kj}}{R_{kk}} \frac{S_{jk}{S_{ki}}}{S_{kk}} R_{kk} \right|^2 \\
	&= \E \left| (\E_{k-1} - \E_k) \left[ R_{kk} S_{kk} \left( \sum_{p, q}^{(k)} M_{kp} \left( R^{(k)} S^{(k)} \right)_{pq} M_{qk} \right)^2 - s_N^{(k)} (z_1) s_N^{(k)} (z_2) \left( \frac{1}{N} \sum_p \left( R^{(k)} S^{(k)} \right)_{pp} \right)^2 \right] \right|^2.
\end{split} \eeq
Since $|R_{kk} S_{kk}| \leq \| R \| \| S \| \leq C$ and $\frac{1}{N} \sum_p \left( R^{(k)} S^{(k)} \right)_{pp} \leq \| R \| \| S \| \leq C$,
\beq \begin{split} \label{rs31}
	&\E \left| (\E_{k-1} - \E_k) \sum_{i, j}^{(k)} \frac{R_{ik} R_{kj}}{R_{kk}} \frac{S_{jk}{S_{ki}}}{S_{kk}} R_{kk} \right|^2 \\
	&\leq C \, \E \left| \left( \sum_{p, q}^{(k)} M_{kp} \left( R^{(k)} S^{(k)} \right)_{pq} M_{qk} \right)^2 - \left( \frac{1}{N} \sum_p \left( R^{(k)} S^{(k)} \right)_{pp} \right)^2 \right|^2 + C \, \E \left| R_{kk} S_{kk} - s_N^{(k)} (z_1) s_N^{(k)} (z_2) \right|^2. 
\end{split} \eeq
Using a simple identity $A^2 - B^2 = (A-B)^2 + 2B(A-B)$, we estimate the first term in the right hand side of \eqref{rs31} by
\beq \begin{split} \label{rs311}
	&\E \left| \left( \sum_{p, q}^{(k)} M_{kp} \left( R^{(k)} S^{(k)} \right)_{pq} M_{qk} \right)^2 - \left( \frac{1}{N} \sum_p \left( R^{(k)} S^{(k)} \right)_{pp} \right)^2 \right|^2 \\
	&= \E \left| \left( \sum_{p, q}^{(k)} M_{kp} \left( R^{(k)} S^{(k)} \right)_{pq} M_{qk} - \frac{1}{N} \sum_p \left( R^{(k)} S^{(k)} \right)_{pp} \right)^2 \right. \\
	&\qquad \qquad \qquad \left. + \frac{2}{N} \sum_p \left( R^{(k)} S^{(k)} \right)_{pp} \left( \sum_{p, q}^{(k)} M_{kp} \left( R^{(k)} S^{(k)} \right)_{pq} M_{qk} - \frac{1}{N} \sum_p \left( R^{(k)} S^{(k)} \right)_{pp} \right) \right|^2 \\
	&\leq C \, \E \left| \sum_{p, q}^{(k)} M_{kp} \left( R^{(k)} S^{(k)} \right)_{pq} M_{qk} - \frac{1}{N} \sum_p \left( R^{(k)} S^{(k)} \right)_{pp} \right|^2 \leq \frac{C}{N},
\end{split} \eeq
where we used Lemma \ref{lem:M deviation} in the last inequality. From \eqref{rs12}, we also find that
\beq \begin{split} \label{rs312}
	&\E \left| R_{kk} S_{kk} - s_N^{(k)} (z_1) s_N^{(k)} (z_2) \right|^2 = \E \left| \left( R_{kk} - s_N^{(k)} (z_1) \right) S_{kk} + s_N^{(k)} (z_1) \left( S_{kk} - s_N^{(k)} (z_2) \right) \right|^2 \\
	&\leq C \, \E \left| R_{kk} - s_N^{(k)} (z_1) \right|^2 + C \, \E \left| S_{kk} - s_N^{(k)} (z_2) \right|^2 \leq \frac{C}{N}.
\end{split} \eeq

From \eqref{rs31}, \eqref{rs311}, and \eqref{rs312}, we obtain a bound
\beq \label{eq:rs3 bound}
	\E \sum_{k=1}^N \left| (\E_{k-1} - \E_k) \sum_{i, j}^{(k)} \frac{R_{ik} R_{kj}}{R_{kk}} \frac{S_{jk}{S_{ki}}}{S_{kk}} \right|^2 \leq C.
\eeq

Finally, the last term in \eqref{eq:rs difference} becomes
\beq
	(RS)_{kk} = R_{kk} S_{kk} \sum_{p, q}^{(k)} M_{kq} \left( R^{(k)} S^{(k)} \right)_{qp} M_{pk},
\eeq
and one can prove by following the same argument as in the derivation of \eqref{eq:rs3 bound} that
\beq \label{eq:rs4 bound}
	\E \sum_{k=1}^N \left| (\E_{k-1} - \E_k) \left[ R_{kk} S_{kk} \sum_{p, q}^{(k)} M_{kq} \left( R^{(k)} S^{(k)} \right)_{qp} M_{pk} \right] \right|^2 \leq C.
\eeq

From \eqref{eq:rs difference}, \eqref{eq:rs1 bound}, \eqref{eq:rs2 bound}, \eqref{eq:rs3 bound}, and \eqref{eq:rs4 bound}, we find that the H\"older condition \eqref{eq:Holder condition} holds, which concludes the proof for tightness of $(\zeta_N)$.

\section{Proof of Lemma \ref{lem:Gamolrsm}} \label{sub:nonrandom}

For $z \in \Gamma_0$, we have $|s_N(z) - s(z)| \prec N^{-1}$ from Corollary \ref{cor:local law}. Thus, for any $\epsilon > 0$,
\beq
\int_{\Gamma_0} \E |\xi_N(z)|^2 \dd z \leq N^{-1 + \epsilon} |\Gamma_0| = 4 N^{-1 + \epsilon - \delta}.
\eeq
Setting $\epsilon = \frac{\delta}{2}$, we find that \eqref{nonrandom1} holds for $\Gamma_0$.

To prove \eqref{nonrandom1} for $\Gamma_r$, it suffices to show that $\E|\xi_N(z)|^2 < K$ for some ($N$-independent) constant $K>0$. In Section \ref{sec:mean}, we proved that
\beq
	\E \xi_N = \frac{s^2}{1-s^2} \left( -J' + \frac{J^2 s}{1 + Js} + (w_2-1)s + s's + \left( W_4 - 3 \right) s^3 \right) + O(N^{-\frac{1}{2} + \epsilon}),
\eeq
thus $|\E \xi_N|^2 < C$ for $z \in \Gamma_r$.

We now estimate $\E |\zeta_N|^2 = \E |\xi_N - \E \xi_N|^2$. Recall that we showed in \eqref{eq:zeta_N decomposition} that
\beq
	\zeta_N
	= \sum_{k=1}^N (\E_{k-1} - \E_k) \left[ R_{kk} \left( 1 + \sum_{p, q}^{(k)} M_{kp} (R^{(k)})^2_{pq} M_{qk} \right) \right].
\eeq
Following the idea in \eqref{rs11}, we use
\beq
	(\E_{k-1} - \E_k) \left[ s_N^{(k)} \left( 1 + \frac{1}{N} \sum_p^{(k)} (R^{(k)})^2_{pp}\right) \right] =0,
\eeq
hence
\beq \begin{split} \label{rl}
	\E |\zeta_N|^2 &= \E \sum_{k=1}^N \left| (\E_{k-1} - \E_k) \left[ R_{kk} \left( 1 + \sum_{p, q}^{(k)} M_{kp} (R^{(k)})^2_{pq} M_{qk} \right) - s_N^{(k)} \left( 1 + \frac{1}{N} \sum_p^{(k)} (R^{(k)})^2_{pp}\right) \right] \right|^2 \\
	&\leq 4 \sum_{k=1}^N \E \left| R_{kk} \left( 1 + \sum_{p, q}^{(k)} M_{kp} (R^{(k)})^2_{pq} M_{qk} \right) - s_N^{(k)} \left( 1 + \frac{1}{N} \sum_p^{(k)} (R^{(k)})^2_{pp}\right) \right|^2.
\end{split} \eeq

Define the event
\beq
	\Omega_N := \{ \mu_1 \leq \widehat{J} + N^{-1/3} \}. 
\eeq
From Lemma \ref{lem:largest eigenvalue}, we find $\p (\Omega_N) < N^{-D}$ for any (large) fixed $D>0$. On $\Omega_N$,
\beq
	|R_{kk}| \leq \| R \| \leq \frac{1}{a_+ -\widehat{J} - N^{-1/3}} \leq C
\eeq
for any $k=1, 2, \dots, N$, uniformly for $z \in \Gamma_r$. Similarly, $\| R^{(k)} \| \leq C$ for any $k=1, 2, \dots, N$, uniformly for $z \in \Gamma_r$. Thus,
\beq \begin{split} \label{rl1}
	&\E \left| \lone(\Omega_N) \left[ R_{kk} \left( 1 + \sum_{p, q}^{(k)} M_{kp} (R^{(k)})^2_{pq} M_{qk} \right) - R_{kk} \left( 1 + \frac{1}{N} \sum_p^{(k)} (R^{(k)})^2_{pp} \right) \right] \right|^2 \\
	&\leq C \, \E \left| \sum_{p, q}^{(k)} M_{kp} (R^{(k)})^2_{pq} M_{qk} - \frac{1}{N} \sum_p^{(k)} (R^{(k)})^2_{pp} \right|^2 \leq \frac{C \| R^{(k)} \|^4}{N} \leq \frac{C}{N},
\end{split} \eeq
Moreover, since
\beq
	\left| \frac{1}{N} \sum_p^{(k)} (R^{(k)})^2_{pp} \right| \leq \| R^{(k)} \|^2 \leq C
\eeq
on $\Omega_N$, from \eqref{rs12''}, we get
\beq \begin{split} \label{rl2}
	&\E \left| \lone(\Omega_N) \left[ R_{kk} \left( 1 + \frac{1}{N} \sum_p^{(k)} (R^{(k)})^2_{pp} \right) - s_N^{(k)} \left( 1 + \frac{1}{N} \sum_p^{(k)} (R^{(k)})^2_{pp}\right) \right] \right|^2 \\
	&\leq C \, \E \left| \lone(\Omega_N) \left[R_{kk} - s_N^{(k)} \right] \right|^2 \leq \frac{C}{N}.
\end{split} \eeq

On $\Omega_N^c$, we use the trivial bound $\| R \|, \| R^{(k)} \| \leq \frac{1}{\im z} \leq N^{\delta}$. Then,
\beq \begin{split} \label{rl1'}
	&\E \left| \lone(\Omega_N^c) \left[ R_{kk} \left( 1 + \sum_{p, q}^{(k)} M_{kp} (R^{(k)})^2_{pq} M_{qk} \right) - R_{kk} \left( 1 + \frac{1}{N} \sum_p^{(k)} (R^{(k)})^2_{pp} \right) \right] \right|^2 \\
	&\leq \left( \E \left[ \lone(\Omega_N^c) |R_{kk}|^2 \right] \right)^{1/2} \left( \E \left| \sum_{p, q}^{(k)} M_{kp} (R^{(k)})^2_{pq} M_{qk} - \frac{1}{N} \sum_p^{(k)} (R^{(k)})^2_{pp} \right|^4 \right)^{1/2} \leq C\p (\Omega_N^c)^{1/2} \frac{\| R \| \| R^{(k)} \|^4}{N} \leq \frac{C}{N}
\end{split} \eeq
and similarly,
\beq \label{rl2'}
	\E \left| \lone(\Omega_N^c) \left[ R_{kk} \left( 1 + \frac{1}{N} \sum_p^{(k)} (R^{(k)})^2_{pp} \right) - s_N^{(k)} \left( 1 + \frac{1}{N} \sum_p^{(k)} (R^{(k)})^2_{pp}\right) \right] \right|^2  \leq \frac{C}{N}.
\eeq

Combining \eqref{rl1}, \eqref{rl2}, \eqref{rl1'}, and \eqref{rl2'}, we obtain
\beq
	\E \left| R_{kk} \left( 1 + \sum_{p, q}^{(k)} M_{kp} (R^{(k)})^2_{pq} M_{qk} \right) - s_N^{(k)} \left( 1 + \frac{1}{N} \sum_p^{(k)} (R^{(k)})^2_{pp}\right) \right|^2 \leq \frac{C}{N},
\eeq
thus, from \eqref{rl},
\beq
	\E |\xi_N|^2 \leq 2\, \E |\zeta_N|^2 + 2\, |\E \xi_N|^2 \leq C,
\eeq
which proves the lemma for $\Gamma_r$. The proof of the lemma for $\Gamma_l$ is the same.

\section{Large deviation estimates} \label{subsec:largdeves}

We prove Lemma \ref{lem:M deviation} for the spiked random matrix $M$. 
The case of non-spiked random matrix is well-known and we adapt its proof. 
We use the following lemma, sometimes referred to as `large deviation estimates'.

\begin{lem}[Lemma 8.1 and Lemma 8.2 of \cite{EYY_bulk}] \label{lem:large deviation}
Let $a_1, \dots, a_N$ be independent (complex) random variables with mean zero and variance $1$. Suppose that $a_1, \dots, a_N$ satisfies the uniform subexponential decay condition. Then, for any deterministic complex numbers $A_i$ and $B_{ij}$ $(i, j = 1, 2, \dots, N)$,
\beq \begin{split}
\left| \sum_{i=1}^N A_i a_i \right| &\prec \left( \sum_{i=1}^N |A_i|^2 \right)^{\frac{1}{2}} \\
\left| \sum_{i=1}^N A_i |a_i|^2 - \sum_{i=1}^N A_i \right| &\prec \left( \sum_{i=1}^N |A_i|^2 \right)^{\frac{1}{2}} \\
\left| \sum_{i \neq j} a_i B_{ij} a_j \right| &\prec \left( \sum_{i \neq j} |B_{ij}|^2 \right)^{\frac{1}{2}}
\end{split} \eeq
\end{lem}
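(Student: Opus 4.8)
The plan is to prove all three bounds of Lemma~\ref{lem:large deviation} by the moment method and then invoke Chebyshev's inequality. Since each right-hand side is a deterministic quantity $Y$, the assertion $X\prec Y$ unwinds to: $\P(|X|>N^{\epsilon}Y)<N^{-D}$ for all $\epsilon,D>0$ and $N$ large. By Chebyshev at order $2p$,
\[
\P\!\left(|X|>N^{\epsilon}Y\right)\le N^{-2p\epsilon}\,\frac{\E|X|^{2p}}{Y^{2p}},
\]
so it suffices, for each fixed $p$, to establish a bound $\E|X|^{2p}\le C_p\,Y^{2p}$ with $C_p$ independent of $N$ and of the coefficients; choosing $p>D/(2\epsilon)$ then finishes. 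The only probabilistic input is that uniform subexponential decay gives, for some $\vartheta>0$ depending on the family, $\E|a_i|^{q}\le(Cq)^{Cq/\vartheta}$ for all $q\ge1$, uniformly in $i$ and $N$. I will also use the elementary inequality $\sum_i|A_i|^{q}\le\big(\sum_i|A_i|^2\big)^{q/2}$ for $q\ge2$ (since $\max_i|A_i|^2\le\sum_i|A_i|^2$), and similarly for $\sum_{i\ne j}|B_{ij}|^{q}$.

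\textbf{The linear and diagonal bounds.} For the first inequality, expand $\E\big|\sum_iA_ia_i\big|^{2p}=\sum A_{i_1}\cdots\overline{A_{i_{2p}}}\,\E[a_{i_1}\cdots\overline{a_{i_{2p}}}]$. By independence and $\E a_i=0$, a term survives only if every index value occurs at least twice, so the surviving terms are indexed by partitions of $\{1,\dots,2p\}$ into blocks of size $\ge2$. For a fixed partition into blocks $P_1,\dots,P_r$, the corresponding sum is at most $\big(\prod_t\E|a|^{|P_t|}\big)\prod_t\big(\sum_i|A_i|^{|P_t|}\big)\le\kappa_p\prod_t\big(\sum_i|A_i|^2\big)^{|P_t|/2}=\kappa_p\big(\sum_i|A_i|^2\big)^{p}$, where $\kappa_p\le(Cp)^{Cp}$ absorbs the moment factors and $\sum_t|P_t|=2p$. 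Summing over the at most $C_p$ partitions gives $\E\big|\sum_iA_ia_i\big|^{2p}\le C_p\big(\sum_i|A_i|^2\big)^{p}$. The second inequality reduces to the first: put $b_i:=|a_i|^2-1$, which are independent with $\E b_i=0$ and again obey a uniform subexponential decay bound (with exponent $\vartheta/2$), hence $\E|b_i|^{q}\le(Cq)^{Cq}$. The argument just given, applied with $a_i$ replaced by $b_i$ (the value of $\E|b_i|^2$, bounded but possibly $\ne1$, only affects $C_p$), yields $\E\big|\sum_iA_ib_i\big|^{2p}\le C_p\big(\sum_i|A_i|^2\big)^{p}$.

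\textbf{The off-diagonal quadratic bound (the crux).} Write $Z=\sum_{i\ne j}a_iB_{ij}a_j$, a mean-zero chaos of degree $2$, and expand
\[
\E|Z|^{2p}=\sum B_{i_1j_1}\cdots\overline{B_{i_{2p}j_{2p}}}\;\E\big[a_{i_1}a_{j_1}\cdots\overline{a_{i_{2p}}a_{j_{2p}}}\big].
\]
Encode each term by the multigraph $G$ with vertex set the distinct index values and $2p$ edges $\{i_k,j_k\}$ (each $i_k\ne j_k$). Independence and $\E a_i=0$ force every vertex of $G$ to have degree $\ge2$, so $G$ has at most $2p$ vertices, and $|\E[\prod a]|\le\prod_\alpha\E|a_\alpha|^{d_\alpha}\le(C\cdot4p)^{C\cdot4p/\vartheta}=:\kappa_p$ since $\sum_\alpha d_\alpha=4p$. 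It remains to bound $\sum\prod_k|B_{i_kj_k}|$ over index assignments compatible with a fixed topology $G$. This sum factorizes over the connected components of $G$; on a component with $e_c$ edges one has, writing the sum as a trace of a product of $e_c$ copies of the entrywise-modulus matrix $|B|$ (after routing through the vertices, splitting at cut vertices), the bound $\le C\,\|B\|_F^{e_c}$, obtained from $|\Tr(|B|^{m})|\le\|B\|_F^{m}$ ($m\ge2$), Cauchy--Schwarz for the Frobenius inner product, and submultiplicativity of the operator norm. Since $\sum_c e_c=2p$, every topology contributes at most $C_p\big(\sum_{i\ne j}|B_{ij}|^2\big)^{p}$; the extremal case, a disjoint union of $p$ double edges, realizes $\big(\sum_{i\ne j}|B_{ij}|^2\big)^{p}$ up to the $\caO_p(1)$ number of labelled such graphs. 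Summing over the $\caO_p(1)$ topologies yields $\E|Z|^{2p}\le C_p\big(\sum_{i\ne j}|B_{ij}|^2\big)^{p}$, and Chebyshev finishes. (A decoupling alternative — bounding $\E|Z|^{2p}$ by $C^{2p}\,\E\big|\sum_{i\ne j}a_iB_{ij}a'_j\big|^{2p}$ with $a'$ an independent copy, conditioning on $a'$, and using the first bound — reduces the problem to controlling $\E_{a'}\|Ba'\|^{2p}$ by $\|B\|_F^{2p}$, which again needs a quadratic-form estimate, so it does not really bypass the combinatorics.)

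\textbf{Main obstacle.} The substance is entirely in the third bound: verifying that every \emph{non}-extremal graph topology is dominated by the main term $\big(\sum_{i\ne j}|B_{ij}|^2\big)^{p}$ with a constant uniform in $N$ and in $B$ — in particular getting the cycle/component estimates for a general (not necessarily symmetric) matrix $B$ without spurious factors of $N$, which requires contracting through the Frobenius inner product and the operator norm rather than naive per-index Cauchy--Schwarz. A secondary bookkeeping point is the complex case, handled by splitting $a_i$ into real and imaginary parts (or by carrying the conjugations through the expansion, which does not alter the degree/vanishing structure). Everything else — the passage from moment bounds to the $\prec$ statements, and the linear and diagonal bounds — is routine.
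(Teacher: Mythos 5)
The paper offers no proof of this lemma at all: it is quoted from \cite{EYY_bulk}, and the text immediately after the statement refers the reader to Appendix~B of that reference. Your high-moment-plus-Markov strategy is exactly the standard route taken there, and your treatment of the first two inequalities is complete: the partition argument together with $\sum_i|A_i|^{q}\le(\sum_i|A_i|^2)^{q/2}$ for $q\ge 2$, the absorption of the subexponential moment growth into the $p$-dependent constant, and the reduction of the diagonal bound to the linear one via $b_i=|a_i|^2-1$ are all correct, as is the final Chebyshev step with $p>D/(2\epsilon)$.

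The gap is in the step you yourself identify as the crux. For a general admissible topology, the sum $\sum\prod_k|B_{i_kj_k}|$ over a connected component is \emph{not} a trace of a product of $e_c$ copies of the entrywise-modulus matrix; that representation is valid only when the component is a single cycle. As soon as some vertex has degree at least $3$ (for instance two triangles sharing a vertex, whose contribution is $\sum_i\left((|B|^3)_{ii}\right)^2$, or a double edge attached to a cycle), the contraction involves diagonal entries and row/column $\ell^2$-norms rather than traces, and the inequality $|\Tr(|B|^{m})|\le\|B\|_{HS}^{m}$ together with submultiplicativity does not by itself yield the claimed bound $C\,\|B\|_{HS}^{e_c}$. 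One genuinely needs an inductive contraction scheme — e.g.\ summing out degree-$2$ vertices via $\|HK\|_{HS}\le\|H\|\,\|K\|_{HS}$, controlling high-degree vertices by estimates of the type $(|B|^{m})_{ii}\le r_i c_i\,\|B\|_{HS}^{m-2}$ with $r_i, c_i$ the $i$-th row and column $\ell^2$-norms, and only then closing the remaining cycles by trace/Frobenius Cauchy--Schwarz — or else the explicit combinatorial estimate carried out in Appendix~B of \cite{EYY_bulk}. Since the entire content of the third inequality is precisely this uniform-in-$B$, uniform-in-$N$ graph bound, asserting it with a justification that is incorrect for general topologies leaves the key estimate unproven, even though the asserted bound is true and the surrounding structure of your argument (the degree-$\ge 2$ vanishing criterion, the moment factor $\kappa_p$, the final Markov inequality) is sound.
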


For the proof of Lemma \ref{lem:large deviation}, see Appendix B of \cite{EYY_bulk}.

\begin{proof}[Proof of Lemma \ref{lem:M deviation}]
We consider the case $n=1$ for the first part of the lemma. We first decompose
\beq \label{eq:M decompose}
M_{ip} S_{pq} M_{qi} = A_{ip} S_{pq} A_{qi} + \frac{J}{N} S_{pq} A_{qi} + \frac{J}{N} A_{ip} S_{pq} + \frac{J^2}{N^2} S_{pq}.
\eeq
Then,
\beq \begin{split} \label{eq:M variance}
&\left| \sum_{p, q}^{(i)} M_{ip} S_{pq} M_{qi} - \frac{1}{N} \sum_p^{(i)} S_{pp} \right|^2 \\
&\leq 4 \left| \sum_{p, q}^{(i)} A_{ip} S_{pq} A_{qi} - \frac{1}{N} \sum_p^{(i)} S_{pp} \right|^2 + \frac{4J^2}{N^2} \left| \sum_{p, q}^{(i)} S_{pq} A_{qi} \right|^2 + \frac{4J^2}{N^2} \left| \sum_{p, q}^{(i)} A_{ip} S_{pq} \right|^2 + \frac{4J^4}{N^4} \left| \sum_{p, q}^{(i)} S_{pq} \right|^2.
\end{split} \eeq
Taking the expectation,
\beq \begin{split}
&\E \left| \sum_{p, q}^{(i)} A_{ip} S_{pq} A_{qi} - \frac{1}{N} \sum_p^{(i)} S_{pp} \right|^2 \\
&= \E \sum_{p, q, r, s}^{(i)} A_{ip} S_{pq} A_{qi} A_{ir} \overline{S_{rs}} A_{si} - \E \sum_{p, q, r} A_{ip} S_{pq} A_{qi} \overline{S_{rr}} - \E \sum_{p, q, r} A_{ip} \overline{S_{pq}} A_{qi} S_{rr} + \frac{1}{N^2} \sum_{p, q}^{(i)} \overline{S_{pp}} S_{qq} \\
&= \frac{1}{N^2} \sum_{p, q}^{(i)} |S_{pq}|^2 + \frac{1}{N^2} \sum_{p, q}^{(i)} S_{pq} \overline{S_{qp}} + \frac{W_4}{N^2} \sum_p^{(i)} |S_{pp}|^2.
\end{split} \eeq
Since
\beq
\sum_{p, q}^{(i)} |S_{pq}|^2 = \| S \|_{HS}^2 \leq N \| S \|^2
\eeq
where $\| \cdot \|_{HS}$ denotes the Hilbert-Schmidt norm. Thus, we find that
\beq
\E \left| \sum_{p, q}^{(i)} A_{ip} S_{pq} A_{qi} - \frac{1}{N} \sum_p^{(i)} S_{pp} \right|^2 \leq \frac{W_4 +2}{N} \| S \|^2.
\eeq
Similarly, for other terms in \eqref{eq:M variance},
\beq
\left| \sum_{p, q}^{(i)} S_{pq} A_{qi} \right|^2 = \left| \sum_{p, q}^{(i)} A_{ip} S_{pq} \right|^2 \leq N \| S \|^2
\eeq
and
\beq
\left| \sum_{p, q}^{(i)} S_{pq} \right|^2 \leq N^2 \sum_{p, q}^{(i)} |S_{pq}|^2 \leq N^3 \| S \|^2.
\eeq
Altogether, we obtain that
\beq
\left| \sum_{p, q}^{(i)} M_{ip} S_{pq} M_{qi} - \frac{1}{N} \sum_p^{(i)} S_{pp} \right|^2 \leq 4(W_4 + 2 + 2J^2 + J^4) \frac{\| S \|^2}{N},
\eeq
which proves the first part of the lemma for $n=1$. The case $n=2$ can be proved analogously.

Next, we prove the second part of the lemma. From the second inequality in Lemma \ref{lem:large deviation},
\beq
\left| \sum_p^{(i)} A_{ip} S_{pp} A_{pi} - \frac{1}{N} \sum_p^{(i)} S_{pp} \right| \prec \frac{1}{N} \left( \sum_p^{(i)} |S_{pp}|^2 \right)^{\frac{1}{2}}.
\eeq
From the third inequality in Lemma \ref{lem:large deviation},
\beq
\left| \sum_{p \neq q}^{(i)} A_{ip} S_{pp} A_{pi} \right| \prec \frac{1}{N} \left( \sum_{p \neq q}^{(i)} |S_{pq}|^2 \right)^{\frac{1}{2}}.
\eeq
Summing the inequalities above, we find that
\beq
\left| \sum_{p, q}^{(i)} A_{ip} S_{pq} A_{qi} - \frac{1}{N} \sum_p^{(i)} S_{pp} \right| \prec \frac{1}{N} \left( \sum_{p, q}^{(i)} |S_{pq}|^2 \right)^{\frac{1}{2}} = \frac{\| S \|_{HS}}{N} \leq \frac{\| S \|}{\sqrt N}.
\eeq
For the second term in \eqref{eq:M decompose}, we apply the first inequality in Lemma \ref{lem:large deviation} and get
\beq \begin{split}
\left| \frac{J}{N} \sum_{p, q}^{(i)} S_{pq} A_{qi} \right| \prec \frac{1}{N\sqrt N} \sum_p^{(i)} \left( \sum_q^{(i)} |S_{pq}|^2 \right)^{\frac{1}{2}} \leq \frac{1}{N} \left( \sum_{p, q}^{(i)} |S_{pq}|^2 \right)^{\frac{1}{2}} \leq \frac{\| S \|}{\sqrt N}.
\end{split} \eeq
The same estimate holds for the third term in \eqref{eq:M decompose}. Finally, for the last term in \eqref{eq:M decompose},
\beq
\left| \sum_{p, q}^{(i)} \frac{J^2}{N^2} S_{pq} \right| \leq \frac{J^2}{N} \left( \sum_{p, q}^{(i)} |S_{pq}|^2 \right)^{\frac{1}{2}} \leq \frac{\| S \|}{\sqrt N}.
\eeq
Summing the estimates, we obtain \eqref{eq:large general}.
\end{proof}


\def\cydot{\leavevmode\raise.4ex\hbox{.}}

\end{document}